\renewcommand{\mathcal}{\mathscr}
\newcommand{\bbA}{\mathbb{A}}
\newcommand{\bbC}{\mathbb{C}}
\newcommand{\bbG}{\mathbb{G}}
\newcommand{\bbN}{\mathbb{N}}
\newcommand{\bbP}{\mathbb{P}}
\newcommand{\bbQ}{\mathbb{Q}}
\newcommand{\bbV}{\mathbb{V}}
\newcommand{\bbZ}{\mathbb{Z}}
\newcommand{\Gm}{\mathbb{G}_m}
\newcommand{\cA}{\mathcal{A}}
\newcommand{\cE}{\mathcal{E}}
\newcommand{\cF}{\mathcal{F}}
\newcommand{\cH}{\mathcal{H}}
\newcommand{\cI}{\mathcal{I}}
\newcommand{\cK}{\mathcal{K}}
\newcommand{\cL}{\mathcal{L}}
\newcommand{\cO}{\mathcal{O}}
\newcommand{\cT}{\mathcal{T}}
\newcommand{\cU}{\mathcal{U}}
\newcommand{\cW}{\mathcal{W}}
\newcommand{\rG}{\textup{G}}
\newcommand{\rH}{\textup{H}}
\newcommand{\rR}{\textup{R}}
\newcommand{\rT}{\textup{T}}
\newcommand{\rd}{\textup{d}}
\newcommand{\frm}{\mathfrak{m}}
\newcommand{\an}{\textup{an}}
\newcommand{\aff}{\textup{aff}}
\newcommand{\ant}{\textup{ant}}
\newcommand{\et}{\textup{\'et}}
\newcommand{\too}{\longrightarrow}
\renewcommand{\phi}{\varphi}
\renewcommand{\epsilon}{\varepsilon}
\renewcommand{\ker}{\Ker}
\DeclareMathOperator{\Gal}{Gal}
\DeclareMathOperator{\pr}{pr}
\DeclareMathOperator{\Spec}{Spec}
\DeclareMathOperator{\Spf}{Spf}
\DeclareMathOperator{\cHom}{\mathcal{H}\hspace{-2.5pt}\textit{om}}
\DeclareMathOperator{\Hom}{Hom}
\DeclareMathOperator{\im}{Im}
\DeclareMathOperator{\Ker}{Ker}
\DeclareMathOperator{\coker}{Coker}
\DeclareMathOperator{\red}{red}
\DeclareMathOperator{\Pic}{Pic}
\DeclareMathOperator{\Sym}{Sym}
\DeclareMathOperator{\hSym}{\widehat{Sym}}
\DeclareMathOperator{\GL}{GL}
\DeclareMathOperator{\Lie}{Lie}
\DeclareMathOperator{\ev}{ev}
\DeclareMathOperator{\id}{id}
\DeclareMathOperator{\Res}{Res}
\DeclareMathOperator{\At}{At}
\DeclareMathOperator{\Fil}{Fil}
\DeclareMathOperator*{\hotimes}{\hat{\otimes}}
\DeclareMathOperator{\characteristic}{char}
\DeclareMathOperator{\tr}{tr}
\DeclareMathOperator*{\hbigoplus}{\hat{\bigoplus}}
\renewcommand{\le}{\leqslant}
\renewcommand{\ge}{\geqslant}
\renewcommand{\projlim}{\varprojlim}
\renewcommand{\injlim}{\varinjlim}
\theoremstyle{plain}
\newtheorem{theoremintro}{Theorem}
\newtheorem*{maintheorem}{Main Theorem}
\newtheorem{theorem}{Theorem}[section]
\newtheorem{lemma}[theorem]{Lemma}
\newtheorem{proposition}[theorem]{Proposition}
\newtheorem{corollary}[theorem]{Corollary}
\newtheorem{claim}[theorem]{Claim}
\theoremstyle{definition}
\newtheorem{definition}[theorem]{Definition}
\newtheorem{example}[theorem]{Example}
\theoremstyle{remark}
\newtheorem{remark}[theorem]{Remark}
\numberwithin{equation}{section}
\begin{document}

\title{Rigid analytic Stein algebraic groups are affine}

\author{Marco Maculan}
\email{marco.maculan@imj-prg.fr}
\address{Institut de Math\'ematiques de Jussieu, Sorbonne Universit\'e, 4 place Jussieu, F-75252 Paris}

\begin{abstract}
Let $K$ be a complete non-trivially valued  non-Archimedean field. Given an algebraic group over $K$ on which every regular function is constant, any rigid analytic function is shown to be constant too. It follows that an algebraic group over $K$ is affine if and only if the associated $K$-analytic space is Stein; that is, 
rigid analytic embeddings of it in an affine space may always be chosen to be given by algebraic functions. Arguably curiously, the corresponding statement over the complex numbers is false.
\end{abstract}

\maketitle


\setcounter{section}{-1}

\section{Introduction}

\subsection{Motivation}
Throughout this introduction, let $K$ be a complete non-trivially valued non-Archimedean field, \emph{e.g.} $K = \bbQ_p$, $\bbC_p$, $k(\!(t)\!)$ for a field $k$, \emph{etc.} For an algebraic variety  $X$ over $K$, that is, a finite type separated $K$-scheme, let $X^\an$ denote the $K$-analytic space\footnote{In this paper analytic spaces over $K$ are considered in the sense of Berkovich. Nonetheless, the main results are insensible to the choice of the language.} attached to it. The algebraic variety $X$ is \emph{Stein} if there is a closed embedding of $K$-analytic spaces $X^\an \hookrightarrow \bbA^{n, \an}_K$ for some $n \ge 0$. Of course, affine varieties are Stein and the frivolous question at the origin of this paper is whether the converse holds true. Counterexamples exist for the corresponding affirmation over the complex numbers, and they can be found already in the restrictive class of algebraic groups. Yet, such examples do not occur over $K$ (see \Cref{Cor:AlgebraicGroupSteinIFFAffine}):

\begin{maintheorem} \label{Thm:AffineVsStein} An algebraic group\footnote{Namely, a group $K$-scheme of finite type.} over $K$ is Stein if and only if it is affine.
\end{maintheorem}

The interest of such a statement, if any, resides in that it reveals yet another divergence between complex and rigid analysis. Before describing more in detail the content of this article, let me briefly recall what happens over $\bbC$. As pointed out by Serre, there are non-affine complex algebraic groups that admit a  closed holomorphic embedding in $\bbC^n$ (for some integer $n \ge 0$). The leading example is the universal vector extension $A^\natural$ of a (non-trivial) complex abelian variety $A$, that is, the moduli space of rank $1$ connections on the dual abelian variety $\check{A}$. The tensor product of line bundles equipped with a connection endows $A^\natural$ with the structure of a complex algebraic group, which sits in the following short exact sequence:
\[ 0 \too \bbV(\omega_{\check{A}}) \too A^\natural \too A \too 0,\]
where the map $A^\natural \to A$ forgets the connection on an algebraically trivial line bundle on $\check{A}$, and the map $\bbV(\omega_{\check{A}}) \to A^\natural$ associates to a differential form $\eta$ on $\check{A}$, the connection on the trivial line bundle $\cO_{\check{A}}$ given by the sum $\rd + \eta$ of the canonical derivation and $\eta$. (The space  of global differential forms on $\check{A}$ is identified with the dual $\omega_{\check{A}}$ of the Lie algebra of $\check{A}$.) On the one hand, the algebraic variety $A^\natural$ is not affine because the quotient of an affine algebraic group is always affine---and $A$ is certainly not so. On the other, a holomorphic connection on a line bundle on $A$ being integrable, the Riemann-Hilbert correspondence yields a biholomorphism 
\[ A^{\natural, \an} \cong \Hom_{\textup{Groups}}(\pi_1(\check{A}(\bbC), 0), \bbC^\times) \cong (\bbC^\times)^{2g},\]where $g := \dim A$, and $\pi_1(\check{A}(\bbC), 0)$ is the topological fundamental group of $\check{A}(\bbC)$ with $0$ as base-point. In particular, the complex manifold $A^{\natural, \an}$ admits a closed holomorphic embedding in $\bbC^{2g + 1}$. 

The difference between affine and Stein varieties is quite subtle. Neeman exhibited a quasi-affine (that is, admitting an open immersion in an affine variety) complex variety which is Stein but not affine.\footnote{The algebraic functions on $A^\natural$ are all constant thus $A^\natural$ is not quasi-affine.} Let me recall his construction. Let $H$ be an ample line bundle on $A$ and $P$ be the total space of the line bundle $p^\ast H^\vee$ deprived of its zero section, where $p \colon A^\natural \to A$ is the projection. The principal $\Gm$-bundle $P$ is quasi-affine by ampleness of $H$, but not affine: otherwise $P / \Gm = A^\natural$ would be so. Besides, a principal $\bbC^\times$-bundle on a Stein space is itself Stein,\footnote{This is true for principal bundle under a Stein complex Lie group, see \cite{MatsushimaMorimoto}.} hence so is $P$. A quasi-affine complex variety $X$ which is Stein but not affine forces its  $\bbC$-algebra of global sections $\Gamma(X, \cO_X)$ not to be finitely generated \cite[Proposition 5.5]{Neeman}. As $P$ is such a variety, it yields a counter-example of Hilbert's fourteenth problem, as extended by Zariski (\emph{op.cit.} Remark 8.2).

\subsection{Main results} Let us place ourselves over $K$ again. Needless to say, in order to prove the main theorem rigid analytic functions on algebraic groups have to be understood. Employing Brion's nomenclature an algebraic group $G$ over $K$ is said to be \emph{anti-affine} if $\Gamma(G, \cO_G) = K$. An anti-affine group is commutative, connected and smooth. In contrast with the complex situation, anti-affine algebraic groups do not admit non-constant analytic functions (\Cref{Cor:FunctionsOnAntiAffineGroupsText}):

\begin{theoremintro} \label{Thm:FunctionsOnAntiAffineGroups} Any analytic function on an anti-affine algebraic group is constant.
\end{theoremintro}

When the field $K$ is of positive characteristic the proof of \Cref{Thm:FunctionsOnAntiAffineGroups} is rather straightforward because anti-affine groups in positive characteristic are all semi-abelian varieties.  
Now, for an algebraic group $G$, the $K$-algebra $\Gamma(G, \cO_G)$ is of finite type and the natural structure of Hopf algebra on it endows  $G_\aff := \Spec \Gamma(G, \cO_G)$ with the structure of an algebraic group. The canonical map $\pi_G\colon G \to G_{\aff}$ is a morphism of algebraic groups with anti-affine. An easy descent argument shows the following (\Cref{Thm:FunctionsOnAnalyticGroupsText}):

\begin{theoremintro}  \label{Thm:FunctionsOnAnalyticGroups} All analytic functions on $G$ come from $G_\aff$ by precomposing with~$\pi_G$.
\end{theoremintro}

The main theorem is an immediate consequence of \Cref{Thm:FunctionsOnAnalyticGroups}. 
Brion has generalized Neeman's example in the following way. Let $G$ be a an anti-affine extension of an abelian variety $A$ and $H$ an ample line bundle on $A$. Let $P$ denote the total space of $p^\ast H^\vee$ deprived of its zero section where $p \colon G \to A$ is the projection. By ampleness the variety $P$ is quasi-affine and Brion shows that the ring $\Gamma(P, \cO_P)$ is not Noetherian \cite[Theorem 3.9]{BrionAntiAffine}. The equivalence between affine and Stein holds even in this more intricate example (\Cref{Thm:NeemanExampleIsNotSteinText}):
%

\begin{theoremintro} Let $G$ be an algebraic group, $L$ a line bundle on $G$ and $P$ the total space of $L$ deprived of its zero section. Then the following are equivalent:
\[ \text{$G$ is affine} \iff \text{$G$ is Stein} \iff \text{$P$ is affine} \iff \text{$P$ is Stein}.  \]
\end{theoremintro}


\subsection{The role of the universal vector extension} \Cref{Thm:FunctionsOnAnalyticGroups} is most interesting when $\characteristic K = 0$ and for the universal vector extension of an abelian variety.   Besides, the algebraicity plays no role and the techniques employed go through for an abeloid variety (the rigid analytic analogue of a complex torus).  The remainder of this introduction will focus on the proof the following result (\Cref{Thm:AnalyticFunctionsOnVectorExt}):

\begin{theoremintro} \label{ThmIntro:FunctionsOnUniversalExtAreConstant} If $\characteristic K = 0$, then any analytic function on the the universal vector extension of an abeloid variety is constant.
\end{theoremintro}

The proof has a rather different flavour depending on the reduction behaviour of the abeloid variety $A$. The case of totally degenerate reduction is perhaps the more intuitive. Indeed, passing to finite extension of $K$, the universal cover  of $A$ is a $K$-torus $T$ and the topological fundamental group is identified with a free abelian group $\Lambda \subseteq T(K)$ of rank $\dim T$. Consider the torus $\check{T}$ with group of characters $\Lambda$ and $\omega_{\check{T}}$ the dual of its Lie algebra. Then the universal vector extension $A^\natural$ can be identified with the quotient $(T \times \bbV(\omega_{\check{T}}))/ \Lambda$. Here the action is given by
\[ \chi.(t, v) = (\chi t, v + \theta_{\Lambda}(\chi))\]
where $\theta_\Lambda(\chi) = \chi^\ast \tfrac{\rd z}{z}$ and $\chi \in \Lambda$ is seen as a character $\chi \colon \check{T} \to \Gm$.  Now an analytic function on $A^\natural$ is an analytic function on $T \times \bbV(\omega_{\check{T}})$ invariant under the action of~$\Lambda$. Since the image of $\theta_\Lambda \colon \Lambda \to \omega_{\check{T}}$ spans the $K$-vector space $\omega_{\check{T}}$ and accumulates to $0$, such an invariant function is necessarily constant.

Suppose that $A$ has good reduction, that is, it is the Raynaud generic fiber of a formal abelian scheme $\cA$. In this case the topology offers no information as the topological space underlying the universal vector extension $A^\natural$ is contractible. When $K$ has $0$  residue characteristic Coleman proved that all algebraic functions on each successive thickening are constant. Passing to the limit then permits to conclude. When $K$ is a valued extension of $\bbQ_p$ the situation is more interesting. There is no loss of generality in supposing $K$ algebraically closed. The idea is to replace the topological universal cover by the `perfectoid' one 
\[ \tilde{\cA} = \textstyle \varprojlim_{\times p} \cA,\] where the transition maps are the multiplication by $p$.\footnote{See \Cref{Sec:FormalProjectiveLimits} for the precise meaning of this projective limit.} Then $\cA^\natural$ can be identified with the quotient $(\tilde{\cA} \times \bbV(\omega_{\check{A}})) / \rT_p \cA$ where $\omega_{\check{A}}$ is the dual of the Lie algebra of the dual formal abelian scheme $\check{A}$ and
\[ \rT_p \cA = \varprojlim_{n \ge 1} \cA[p^n]\]
is the Tate module. (Actually the projective limit is taken scheme-theoretically so the result carries a non-trivial non-reduced structure.) The action of $\rT_p \cA$ is given by some morphism of (formal schemes in) groups $\theta_{\rT_p \cA} \colon \rT_p \cA \to \bbV(\omega_{\check{A}})$. Following an insight of Coleman \cite[p. 379]{ColemanHodgeTate}, \cite[\S4]{ColemanBiextension} and Faltings \cite[Theorem 4]{FaltingsHodgeTate}, the $K$-linear map 
\[ \theta_{\rT_p \cA} \colon \rT_p \cA(R) \otimes_{\bbZ_p} K \too \omega_{\check{\cA}} \otimes_R K = \omega_{\check{A}} \]
is surjective and leads to the Hodge-Tate decomposition of $\rH^1_{\et}(A, \bbQ_p)$. Moreover the analytic functions on $A^\natural$ are those on~$\bbV(\omega_{\check{A}})$ invariant under the translation by~$\rT_p \cA(R)$. Since the image of $\theta_{\rT_p \cA}$ spans the $K$-vector space $\omega_{\check{A}}$ and accumulates to $0$, such functions are necessarily constant. The above description of $\cA^\natural$ and its formal functions is carried out first at the algebraic level in \Cref{Sec:RepresentationCanonicalExtensionAlgebraic,,sec:AlgebraicFunctionsVectorExtensions}  for all successive thickenings and then  passed to the limit in \Cref{Sec:AlmostFiniteAndHodgeTateDecomposition}.

Finally, when $A$ has intermediate reduction, the proof mixes the two techniques. In this case the universal cover of $A$ is an extension
\[ 0 \too T \too E \too B \too 0 \]
where $T$ is a $K$-torus and $B$ an abeloid variety with good reduction. The topological fundamental group of $A$ is identified with a free abelian group $\Lambda \subseteq E(K)$ of rank $\dim T$. The universal cover of the dual abeloid variety is a similar extension
\[ 0 \too \check{T} \too \check{E} \too \check{B} \too 0 \]
where $\check{B}$ is the dual of $B$ and $\check{T}$ is the $K$-torus with group of characters $\Lambda$. Then the universal cover $E^\natural$  of $A^\natural$ is the push-out of $B^\natural \times_B E$ along the $K$-linear map $\omega_{\check{B}} \to \omega_{\check{E}}$ induced by the projection $\check{E} \to \check{B}$. Here $\omega_{\check{B}}$ is the dual of the Lie algebra of $\check{B}$, and similarly for $\omega_{\check{E}}$. In other words, the following diagram is exact and commutative:
\[
\begin{tikzcd}
0 \ar[r] & \bbV(\omega_{\check{B}}) \ar[r] \ar[d] & B^\natural \times_B E \ar[r] \ar[d] & E \ar[r] \ar[d, equal] & 0  \\
0 \ar[r] & \bbV(\omega_{\check{E}}) \ar[r] & E^\natural \ar[r] & E \ar[r] & 0  
\end{tikzcd}
\]
The topological fundamental group of $A^\natural$ is identified with a subgroup $\Lambda^\natural \subseteq E^\natural(K)$ and the projection $E^\natural \to E$ induces an isomorphism $\Lambda^\natural \cong \Lambda$. Once again analytic functions on $A^\natural$ correspond to $\Lambda^\natural$-invariant analytic functions on $E^\natural$. A careful analysis permits to show that the latter are all constant. This requires the knowledge of functions on \emph{all} commutative extensions of abeloid varieties with good reduction, and for this reason all extensions are treated at once.

\subsubsection*{Acknowledgements} This research was supported by ANR-18-CE40-0017.

\subsection{Reminders and conventions}Let $X$ be a locally ringed space and
\begin{equation} \tag{$F$}
\cdots \too F_{i - 1} \too F_i \too F_{i + 1} \too \cdots
\end{equation}
a sequence of $\cO_{X}$-modules indexed by integers. For morphisms of locally ringed spaces $f \colon Y \to X$ and $g \colon X \to Z$, and an $\cO_X$-module $M$, let $f^\ast (F)$, $g_\ast (F)$, $(F) \otimes M$ denote the sequences obtained from $(F)$ respectively pulling-back along $f$, pushing-forward along $g$, and taking the tensor product with $M$.

Let $K$ be a complete nontrivally valued non-Archimedean  field. In this paper $K$-analytic spaces are considered in the sense of Berkovich (\cite{BerkovichIHES}). By an abuse of notation, given a $K$-analytic space $X$, an $\cO_X$-module here is what is called an $\cO_{X_{\rG}}$-module in \emph{op.cit.}. As soon as the $K$-analytic space $X$ is good (that is, every point admits an affinoid neighbourhood) the  two notions coincide (\emph{op.cit.} Proposition 1.3.4). For a point $x$ in a $K$-analytic space $X$, let $\cH(x)$ be the completed residue field. For a $K$-analytic space $S$, an $S$-analytic space in groups will be called simply an $S$-analytic group. An \emph{abeloid variety} over $S$ is a proper, smooth $S$-analytic group with connected fibers.

\section{Applications to Stein algebraic groups} \label{sec:ApplicationToAlgebraicGroups}

Let $k$ be a field and $\bar{k}$ a separable closure of $k$. For a $k$-scheme $X$ let $\bar{X}$ be the $\bar{k}$-scheme obtained from $X$ by extending scalars to $\bar{k}$. An \emph{algebraic group} over $k$ is a finite type group $k$-scheme.

\subsection{Classification of algebraic groups} Let $A$ be an abelian variety over $k$, $\check{A}$ its dual and $\omega_{\check{A}} = (\Lie \check{A})^\vee$.   

Let  $T$ be a $k$-torus and $X^\ast(\bar{T})$ the group of characters of the split $\bar{k}$-torus $\bar{T}$. Suppose given a Galois-equivariant group homomorphism $c \colon X^\ast(\bar{T}) \to \check{A}(\bar{k})$. The datum of $T$ and $c$ corresponds to a semi-abelian variety
\[ 0 \too T \too E(T, c) \too A \too 0.\]
A point $g$ of $E(T, \phi)$ with values in a $\bar{k}$-scheme $S$ is a couple made of an $S$-valued point $x$ of $\bar{A}$ and, for $\chi \in \Lambda$, of an isomorphism $\langle g , \chi \rangle \colon \cO_S \to x^\ast L_\chi$ where $L_\chi$ is the pull-back along $(\id, c(\chi)) \colon \bar{A} \to \bar{A} \times \check{\bar{A}}$ of the Poincar\'e bundle $\cL$ on $A \times \check{A}$. For $\chi, \chi' \in \Lambda$ the trivializations $\langle g, \chi \rangle$ undergo the relation $ \langle g , \chi \rangle \otimes \langle g , \chi' \rangle = \langle g , \chi + \chi' \rangle$ which is meant to be understood via the isomorphism $L_\chi \otimes L_{\chi'} \cong L_{\chi + \chi'}$. 

Let $F$ a finite-dimensional $k$-vector space and $\phi \colon \omega_{\check{A}} \to F$ a $k$-linear map. The datum of $F$ and $\phi$ yields a vector extension
\[ 0 \too \bbV(F) \too V(F, \phi) \too A \too 0.\]
Its isomorphism $[V(F, \phi)]$ class as a principal $\bbV(F)$-bundle lives in the cohomology group $\rH^1_{\textup{fppf}}(A, \bbV(F)) = \rH^1(A, \cO_A) \otimes_k F \cong \Lie \check{A} \otimes_k F = \Hom(\omega_{\check{A}}, F)$
where the first equality is \cite[Exp. XI, Proposition 5.1]{SGA1}, the second is the usual expression for the Lie algebra of the dual abelian variety \cite[8.4, Theorem 1]{NeronModels} and the third holds by definition. The vector extension $V(F, \phi)$ is characterized by $[V(F, \phi)] = \phi$ via the preceding identification. 

Summing up the quintuple $(A, T, F, c, \phi)$ gives rise to the smooth, commutative, connected algebraic group \[E(T, c) \times_A V(F, \phi).\]


Conversely suppose that the field $k$ is perfect and let $G$ be a connected and reduced (or, equivalently, smooth) algebraic group. By Chevalley's theorem (\cite{ConradChevalley}), the algebraic group $G$ is an extension of an abelian variety $A$ by an affine algebraic group $L$. The algebraic groups $A$ and $L$ are respectively called the \emph{abelian} and \emph{linear} parts of $G$. When the algebraic group $G$ is moreover commutative, the linear part $L$ of $G$ is of the form $T \times_k U$ for a $k$-torus $T$ and a unipotent group $U$, called respectively the \emph{toric} and \emph{unipotent} parts of $G$. The algebraic group $G/U$ is a semi-abelian variety. When $\characteristic(k) = 0$  the unipotent group $U$ is of the form $\bbV(F)$ for a finite-dimensional $k$-vector space \cite[\S 2.7, p. 172]{SerreGroupesAlgebriques}, while in positive characteristic this may fail. If $U$ is a vector group, then $G/T$ is a vector extension of $A$. These considerations lead to the following classification:

\begin{lemma} Suppose $k$ perfect. The map $(A, T, F, c, \phi) \mapsto E(T, c) \times_A V(F, \phi)$ defined above sets up a bijection between the sets of isomorphism classes of
\begin{enumerate}
\item quintuples $(A, T, F, c, \phi)$ made of an abelian variety, a $k$-torus $T$, a finite-dimensional $k$-vector space $F$, a  $\Gal(\bar{k}/k)$-equivariant homomorphism of groups $c \colon X^\ast(\bar{T}) \to \check{A}(\bar{k})$ and a $k$-linear map $\phi \colon \omega_{\check{A}} \to F$;
\item commutative, connected and reduced algebraic groups over $k$ whose unipotent part is a vector group,
\end{enumerate}
the notion of isomorphism for these quintuples being defined in the evident manner.
\end{lemma}

Write $G \mapsto (A_G, T_G, c_G, \phi_G)$ for its inverse.

\subsection{Anti-affine algebraic groups} An algebraic group $G$ over a field $k$ is \emph{anti-affine} if $\Gamma(G, \cO_G) = k$. As recalled in the introduction, an anti-affine algebraic group is connected and reduced. 
With the notation in the preceding section:

\begin{theorem}[{\cite[Theorem 2.7]{BrionAntiAffine}}]
Let $G$ be a commutative, connected, reduced algebraic group over a perfect field $k$. Then, the algebraic group $G$ is anti-affine if and only if one of the following conditions is fulfilled:
\begin{enumerate}
\item $\characteristic k = 0$, $c_G$ is injective and $\phi_G$ surjective;
\item $\characteristic k >0$, $c_G$ is injective and the unipotent part of $G$ is trivial.
\end{enumerate}
\end{theorem}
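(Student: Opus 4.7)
The plan is to recast anti-affineness as the vanishing of the two groups of algebraic group homomorphisms $\Hom(G, \Gm)$ and $\Hom(G, \bbG_a)$, and then to compute each in terms of the invariants $c_G$ and $\phi_G$. My starting point is the universal morphism $\pi_G \colon G \to G_\aff := \Spec \Gamma(G, \cO_G)$ to the maximal affine quotient (well-defined via the Hopf algebra structure on $\Gamma(G, \cO_G)$ recalled in the discussion preceding Theorem III): the group $G$ is anti-affine if and only if $G_\aff = \{e\}$, equivalently every morphism of algebraic groups from $G$ to an affine target is trivial. As the image of such a morphism is commutative and $G$ is connected and reduced, it suffices to test this vanishing against commutative, smooth, connected $H$, and the structure theory of the latter (plus the fact that, in positive characteristic, every smooth connected commutative unipotent group admits a $\bbG_a$-quotient) reduces the problem to $H = \Gm$ and $H = \bbG_a$. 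A preliminary flat base change would allow me to work over $k = \bar k$.

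For $\Hom(G, \Gm)$ I would argue as follows. Any character $f \colon G \to \Gm$ vanishes on the unipotent part $U$ (which admits no nontrivial character) and hence factors through the semi-abelian quotient $G/U$, restricting to some $\chi \in X^\ast(T)$. Conversely, by the universal property of push-out, $\chi$ extends to $G/U \to \Gm$ if and only if the extension $0 \to \Gm \to H \to A \to 0$ obtained by pushing out $0 \to T \to G/U \to A \to 0$ along $\chi$ splits; its class in $\check A(\bar k) = \Ext^1(A, \Gm)$ is precisely $c_G(\chi)$. Hence $\Hom(G, \Gm) = \ker c_G$, which vanishes if and only if $c_G$ is injective.

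In characteristic zero I would run the same template for $\bbG_a$: any $f \colon G \to \bbG_a$ vanishes on $T$ (there is no nontrivial group homomorphism $\Gm \to \bbG_a$) and on the abelian quotient (a proper group has no nonzero map to $\bbG_a$), hence factors through the vector quotient $U = \bbV(F)$ as a linear form $\lambda \in F^\vee$. Such a $\lambda$ lifts to $G/T$ if and only if the push-out of $0 \to \bbV(F) \to G/T \to A \to 0$ along $\lambda$ is split, i.e.\ if and only if $\lambda \circ \phi_G = 0$, by the very definition of $\phi_G$ as the extension class. Consequently $\Hom(G, \bbG_a) = (\im \phi_G)^\perp$, vanishing if and only if $\phi_G$ is surjective; combined with the $\Gm$-computation this would yield the characteristic zero half of the theorem.

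The main obstacle is positive characteristic, where $U$ need not be a vector group and $\phi_G$ is not even defined; here the content is to show that $G$ anti-affine forces $U = 0$, after which the converse follows from the $\Gm$-computation together with the trivial observation that a semi-abelian group admits no nonzero homomorphism to $\bbG_a$. For the necessity of $U = 0$ I would argue as follows. Since $U$ embeds in the group of upper-triangular unipotent matrices of some $\GL_n$, one has $[p^N]\vert_U = 0$ for $N$ sufficiently large; thus $[p^N] \colon G \to G$ factors through the semi-abelian quotient $G/U$, on which $[p^N]$ is an isogeny. A dimension count then yields $\dim [p^N] G = \dim G - \dim U$ together with $[p^N] G \cap U$ finite, so $G/[p^N] G$ is connected of dimension $\dim U$ and coincides with the image of $U$; in particular it is a quotient of the affine group $U$. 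If $U \neq 0$ this is a nontrivial affine quotient of $G$, contradicting anti-affineness, and the proof concludes.
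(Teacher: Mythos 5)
Your argument is correct, but note that the paper does not actually prove this statement: it is quoted from Brion with a citation, so there is no internal proof to match against. What you have written is a genuine, self-contained proof, and it takes a different route from the one Brion uses (and which this paper extends to the rigid-analytic setting). You dualize the problem: instead of computing $\Gamma(G,\cO_G)$, you observe that $G$ is anti-affine iff $G_{\aff}=0$ iff $\Hom(G,\Gm)=\Hom(G,\bbG_a)=0$ (after passing to $\bar{k}$, using that a nontrivial smooth connected commutative affine group surjects onto $\Gm$ or $\bbG_a$), and you then identify $\Hom(G,\Gm)$ with $\Ker c_G$ via the Weil--Barsotti description of $\Ext^1(A,\Gm)$, and $\Hom(G,\bbG_a)$ with the annihilator of $\im\phi_G$ via the classification of vector extensions. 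The positive-characteristic step, where $[p^N]$ kills $U$ and hence exhibits $G/[p^N]G\cong U/(U\cap [p^N]G)$ as an affine quotient of dimension $\dim U$, is also sound; you should just record explicitly that $U\cdot[p^N]G=G$ by the dimension count, and that $U$ is smooth and connected so that $U\neq 0$ forces $\dim U>0$. By contrast, Brion's method --- echoed throughout this paper --- computes $\Gamma(G,\cO_G)$ directly, as a Fourier sum $\bigoplus_{\chi}\Gamma(A,L_{c(\chi)}^{\vee})$ over the toric part and as $\injlim_{d}\Gamma(A,\Sym^d\cF^\vee)$ over the vector part. Your approach is shorter and more elementary for the stated equivalence; the function-theoretic one yields the full ring $\Gamma(G,\cO_G)$ even when $G$ is not anti-affine, which is what the paper actually needs elsewhere (compare Theorem \ref{Thm:AlgebraicFunctionsOnUniversalExt}).
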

In other words, anti-affine algebraic groups over a field of positive characteristic are anti-affine semi-abelian varieties. Moreover, if the field $k$ is an algebraic extension of a finite field, the only anti-affine semi-abelian varieties are abelian varieties (because, in this case, all $\bar{k}$-rational points of $\check{A}$ are torsion).

For an algebraic group $G$ over $k$, the $k$-algebra $\Gamma(G, \cO_G)$ is of finite type and inherits a structure of Hopf algebra, so $G_{\aff} := \Spec \Gamma(G, \cO_G)$ is an algebraic group over $k$. The canonical morphism $\pi_G \colon G \to G_{\aff}$ is faithfully flat and $\ker \pi_G$ is anti-affine (\cite[III.3.8.2 (c)]{DemazureGabriel}).

\subsection{Main results and applications} Let $K$ be a complete non-trivially valued non-Archimedean  field. For a $K$-scheme finite type $X$, let $X^\an$ be the $K$-analytic space deduced from $X$. Consider:
\begin{itemize}
\item $A$ an abelian variety over $K$,
\item $\Lambda$ a free abelian group of finite rank,
\item $F$ a finite-dimensional $K$-vector space,
\item $c \colon \Lambda \to \check{A}(K)$ a homomorphism of groups, and
\item $\phi \colon \omega_{\check{A}} \to F$ a $K$-linear map,
\end{itemize}
where $\check{A}$ is the dual abelian variety and $\omega_{\check{A}}$ the dual of $\Lie \check{A}$. Let $G$ be the reduced, connected, commutative algebraic group over $K$ corresponding to the quintuple $(A, T, F, c, \phi)$, where $T$ is the split $K$-torus with group of characters $\Lambda$.

\begin{theorem}[{\emph{infra} \Cref{Cor:AnalyticFunctionsVectExtSemiAbelian}}] \label{Thm:MainTheoremApplications} Suppose one of the following:
\begin{enumerate}
\item $\characteristic K = 0$, $c$ is injective and $\phi$ surjective;
\item $\characteristic K > 0$, $c$ is injective and $F = 0$.
\end{enumerate}
Then all $K$-analytic functions on $G^\an$ are constant.
\end{theorem}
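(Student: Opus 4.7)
The plan is to use the $T$-torsor $G \to G/T$ and Fourier expansion to reduce the assertion to two facts about the quotient $G/T$, each of which I would in turn reduce to the already established Theorem \ref{ThmIntro:FunctionsOnUniversalExtAreConstant} on the universal vector extension.

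First, since $T$ is a split torus with character lattice $\Lambda$, the toric-bundle formalism invoked in the paper yields a ``Fourier'' decomposition
\[
\Gamma(G^\an,\cO_{G^\an}) \;=\; \hbigoplus_{\chi \in \Lambda} \Gamma((G/T)^\an, L_\chi^\an),
\]
where $L_\chi$ is the $\chi$-isotypic line bundle on $G/T$ cut out by the $T$-action. Using the identification $G = (G/T) \times_A (G/\bbV(F))$ (both sides are extensions of $A$ by $T \times \bbV(F)$, and the obvious map is an isomorphism), the $T$-torsor $G \to G/T$ is the pull-back along $q \colon G/T \to A$ of the $T$-torsor $G/\bbV(F) \to A$; hence $L_\chi = q^\ast M_\chi$, where $M_\chi \in \Pic^0(A) = \check A(K)$ is the translation-invariant line bundle corresponding to $c(\chi)$.

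I would then handle the contributions of $\chi = 0$ and $\chi \neq 0$ separately. For $\chi = 0$, the goal is $\Gamma((G/T)^\an, \cO) = K$. In case (ii), $G/T = A$, so GAGA gives $\Gamma(A^\an, \cO) = K$. In case (i), the surjectivity of $\phi$ realises $G/T$ as the push-out of $A^\natural$ along $\phi \colon \omega_{\check A} \twoheadrightarrow F$, yielding a faithfully flat morphism of algebraic groups $A^\natural \twoheadrightarrow G/T$ with vector kernel $\bbV(\ker \phi)$; the induced injection $\Gamma((G/T)^\an, \cO) \hookrightarrow \Gamma((A^\natural)^\an, \cO)$ combined with Theorem \ref{ThmIntro:FunctionsOnUniversalExtAreConstant} gives the result. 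For $\chi \neq 0$, the goal is $\Gamma((G/T)^\an, q^\ast M_\chi^\an) = 0$. In case (ii), this is $\Gamma(A^\an, M_\chi^\an) = 0$, which follows from GAGA together with the classical vanishing of global sections of a non-trivial translation-invariant line bundle on an abelian variety (note $M_\chi \neq \cO_A$ as $c$ is injective).

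The remaining step in case (i) is the substantive one: pulling back along $A^\natural \twoheadrightarrow G/T$, it suffices to show that $\Gamma((A^\natural)^\an, r^\ast M_\chi^\an) = 0$ for every non-trivial $M_\chi \in \Pic^0(A)$, where $r \colon A^\natural \to A$ is the projection. This is the strengthening of Theorem \ref{ThmIntro:FunctionsOnUniversalExtAreConstant} to arbitrary twists by translation-invariant line bundles that the introduction explicitly flags (``not only global sections on vector extensions of the trivial line bundle, but of (the pull-back of) any (translation-invariant) line bundle''), and it is the \emph{main obstacle} I anticipate. I would attack it along the same route as Theorem \ref{ThmIntro:FunctionsOnUniversalExtAreConstant}: Tate--Raynaud uniformize $A$, expand any section of $r^\ast M_\chi^\an$ into a Fourier-type series of sections of translation-invariant line bundles on the formal abelian part of the uniformization, and exploit $\Lambda$-invariance together with the accumulation at the origin of $p$-adic lattices in the vector extension direction to force all coefficients to vanish. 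Once that is in hand, the Fourier decomposition of $\Gamma(G^\an,\cO_{G^\an})$ collapses to its $\chi=0$ summand, which is $K$, and the theorem follows.
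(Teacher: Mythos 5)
Your plan is correct and is essentially the paper's own proof (Corollary \ref{Cor:AnalyticFunctionsVectExtSemiAbelian}): Fourier expansion over the $T$-torsor $G \to G/T$ kills the summands with $\chi \neq 0$ by the vanishing of sections of pull-backs of non-trivial homogeneous line bundles on the vector extension, and the $\chi = 0$ summand equals $K$ by constancy of analytic functions on the vector extension (your reduction of the surjective-$\phi$ case to $A^\natural$ by injectivity of pull-back along the faithfully flat quotient is a harmless shortcut of the paper's direct treatment). You have correctly isolated the substantive input --- the twisted vanishing $\Gamma(\bbA(\cF), \pi^\ast L) = 0$ for $L$ a non-trivial homogeneous line bundle, which is Theorem \ref{Thm:AnalyticFunctionsOnVectorExt}~(1) --- and your sketch of its proof (good reduction via formal functions, then Tate--Raynaud uniformization, Fourier expansion along the toric part, and $\Lambda$-invariance with lattice estimates) is the route the paper actually takes.
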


\begin{corollary} \label{Cor:FunctionsOnAntiAffineGroupsText} Let $G$ be an anti-affine algebraic group over $K$ and $P$ a principal $G$-bundle, that is, a non-empty algebraic variety over $K$ endowed with a simply transitive action of $G$. Then any $K$-analytic function on $P^\an$ is constant.
\end{corollary}

\begin{proof} Let $\bar{K}$ be the completion of an algebraic closure of $K$. Consider the anti-affine algebraic group $\bar{G}$ over $\bar{K}$ obtained from $G$ by extending scalars to $\bar{K}$ and the principal $\bar{G}$-bundle $\bar{P}$. The choice of a point $\bar{K}$-point of $\bar{P}$ yields an isomorphism $\bar{P} \cong \bar{G}$. Since the toric part of $\bar{G}$ is split, \Cref{Thm:MainTheoremApplications} may be applied the algebraic group $\bar{G}$ over $\bar{K}$ to give the identity 
$ \Gamma(\bar{P}^\an, \cO_{\bar{P}^\an}) = \Gamma(\bar{G}^\an, \cO_{\bar{G}^\an}) = \bar{K}$. Rather generally, for a $K$-analytic space $X$, the homomorphism $\Gamma(X, \cO_{X}) \otimes_K \bar{K} \to \Gamma(\bar{X}, \cO_{\bar{X}})$ of $\bar{K}$-algebras  is injective, where $\bar{X}$ is the $\bar{K}$-analytic space deduced from $X$ by extending scalars.  The equality $\Gamma(P^\an, \cO_{P^\an}) = K$ follows.
\end{proof}

\begin{theorem} \label{Thm:FunctionsOnAnalyticGroupsText} For an algebraic group $G$ precomposing with $\pi^\an_G$ is an isomorphism
\[ \Gamma(G^\an_{\aff}, \cO_{G_{\aff}}^\an) \stackrel{\sim}{\too} \Gamma(G^\an, \cO_{G}^\an). \]
\end{theorem}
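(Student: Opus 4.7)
Injectivity of precomposition with $\pi_G^\an$ on global sections is immediate from faithful flatness of $\pi_G$, a property preserved by analytification. For surjectivity, the strategy is to apply Lemma \ref{Lemma:FaithfullyFlatDescentFunctionConstantOnTheFibers} to the analytified projection $\pi_G^\an \colon G^\an \to G_{\aff}^\an$.

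The structural hypotheses on $\pi_G^\an$ are easily arranged. By Proposition \ref{Prop:KernelProjectioAffineIsAntiAffine}, the kernel $N := \ker \pi_G$ is an anti-affine algebraic group over $K$, hence connected and reduced, and consequently smooth over $K$. The morphism $\pi_G$ is therefore a faithfully flat and smooth $N$-torsor, and these properties descend to the analytic morphism $\pi_G^\an$.

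To verify the fiber condition, I would fix a Berkovich point $s \in G_{\aff}^\an$ and observe that the analytic fiber $(G^\an)_s$ is naturally a torsor under $N_{\cH(s)}^\an$. Smoothness of $N$ allows this torsor to be trivialized after extending scalars to the completion $L$ of an algebraic closure of $\cH(s)$, identifying the corresponding base change of $(G^\an)_s$ with $N_L^\an$. Since $N_L$ is anti-affine over the complete non-Archimedean field $L$, Corollary \ref{Cor:FunctionsOnAntiAffineGroupsText} yields $\Gamma(N_L^\an, \cO_{N_L^\an}) = L$; combined with the injectivity of the scalar-extension map on global analytic sections (the same one invoked in the proof of that Corollary), this forces $\Gamma((G^\an)_s, \cO_{(G^\an)_s}) = \cH(s)$, exactly as the Lemma demands.

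The one remaining hypothesis is reducedness of $G_{\aff}^\an$, which is automatic as soon as $G$ itself is reduced: then $\Gamma(G, \cO_G)$ is a reduced $K$-algebra, so $G_{\aff}$ and hence $G_{\aff}^\an$ are reduced, and the Lemma applies directly. This covers the case of characteristic zero and, more generally, of smooth $G$. For a possibly non-reduced $G$, I would run a dévissage along the nilradical filtration: smoothness of $\pi_G$ identifies the nilradical of $\cO_G$ with the extension of the nilradical of $\cO_{G_{\aff}}$, so the successive graded pieces $\cI^k / \cI^{k+1}$ are coherent sheaves on the reduced subscheme of $G_{\aff}^\an$, to which the coherent-sheaf version of the Lemma (an immediate generalization, since the fiber argument gives $\Gamma((G^\an)_s, \cF \otimes \cO_{(G^\an)_s}) = \cF_s$ for any coherent $\cF$) descends global sections. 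The main technical point I anticipate is precisely this dévissage, together with the clean identification of $(G^\an)_s$ as an $N_{\cH(s)}^\an$-torsor trivialized by an algebraic extension to which the Corollary applies verbatim; the descent step itself is otherwise a routine application of the machinery already set up in the paper.
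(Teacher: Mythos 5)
Your proposal is correct and follows essentially the same route as the paper: injectivity from faithful flatness, surjectivity via Lemma \ref{Lemma:FaithfullyFlatDescentFunctionConstantOnTheFibers} applied to the smooth morphism $\pi_G^\an$ whose fibers are (torsors under) anti-affine groups handled by Corollary \ref{Cor:FunctionsOnAntiAffineGroupsText}, and a d\'evissage along the nilradical filtration (the paper realizes it concretely via the augmentation ideal of the finite quotient $G/G_{\red}$ and a Five Lemma on the successive thickenings) for non-reduced $G$. Your treatment of the fiber as an $N^\an_{\cH(s)}$-torsor trivialized over the completed algebraic closure is in fact slightly more careful than the paper's bare assertion that the fiber is the analytification of $G_{\ant}\times_K\cH(s)$, and the "coherent-sheaf version" of the Lemma you invoke is only needed for sheaves of the form $E_i\otimes_K\cO$, where it reduces to the structure-sheaf case.
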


In order to prove it the following two facts are needed:

\begin{lemma} \label{Lemma:FaithfullyFlatDescentFunctionConstantOnTheFibers} Let $\pi \colon X \to S$ be a smooth surjective morphism of $K$-analytic spaces. Suppose $S$ reduced and, for $s \in S$, that all $\cH(s)$-analytic functions on $X_s = \pi^{-1}(s) $ are constant. Then precomposing with $\pi$ gives an isomorphism \[\Gamma(S, \cO_S) \stackrel{\sim}{\too} \Gamma(X, \cO_X). \]
\end{lemma}

\begin{proof} The morphism $\pi$ is faithfully flat, thus the map $\Gamma(S, \cO_S) \to \Gamma(X, \cO_X)$
is injective. Proving the statement amounts to exhibiting, for $f \in \Gamma(X, \cO_X)$, a necessarily unique $g \in \Gamma(S, \cO_S)$ such that $f = g \circ \pi$. Invoking faithfully flat descent \cite[Th\'eor\`eme 3.17]{DayliesDescent} this is equivalent to proving the equality 
\begin{equation}\label{Eq:FaithfullyFlatDescentFunctionConstantOnTheFibers} f \circ \pr_1 = f \circ \pr_2 \end{equation}
of $K$-analytic functions on $X \times_S X$, where $\pr_1, \pr_2 \colon X \times_S X \to X$ are the projections. Note that $\pi$  is properly surjective in Daylies' terminology because it is smooth and surjective, thus faithfully flat and without boundary; see \emph{op.cit.} \S 1.3. It suffices to check \eqref{Eq:FaithfullyFlatDescentFunctionConstantOnTheFibers} set-theoretically because $X \times_S X$ is reduced, the morphism $X \times_S X \to S$ being smooth and $S$ being reduced.  For $s \in S$ and check the wanted identity on the fiber $X_s$. Each $\cH(s)$-analytic function on the $\cH(s)$-analytic space $X_s$ is constant by hypothesis. Therefore $f_{\rvert X_s} \circ \pr_1 = f_{\rvert X_s} \circ \pr_2$ as desired.
\end{proof}

\begin{lemma} \label{FunctionsOnDeformation} Let $S$ be a finite $K$-analytic space whose underlying topological space is a singleton $\{ s \}$ and $\pi \colon Y \to X$ a morphism between flat $S$-analytic spaces. If $X$ is Stein in the sense of \Cref{sec:Stein} and the natural map $\Gamma(X_s, \cO_{X_s}) \to \Gamma(Y_s, \cO_{Y_s})$ is an isomorphism, then the natural map  $\Gamma(X, \cO_{X}) \to \Gamma(Y, \cO_{Y})$ is an isomorphism.
\end{lemma}

\begin{proof} The $K$-analytic space $S$ is the Banach spectrum of finite local $K$-algebra $A$. The ideal of nilpotent elements of $A$ coincides with its maximal ideal $\frm$. For $i \in \bbN$ consider the closed subspace $S_i \subseteq S$ defined by the ideal $\frm^{i + 1}$ and
\[ \pi_i \colon Y_i := Y \times_S S_i \too X_i := X \times_S S_i\]
the morphism induced by base-change. With this notation $S_0 = \{ s \}$ with its reduced structure and $\pi_0 = \pi_s$. For $i \ge 1$, consider the short exact sequence
\[ 0 \too \frm^{i} \cO_X / \frm^{i+1} \cO_X \too \cO_{X_i} \too \cO_{X_{i-1}} \too 0\]
of $\cO_X$-modules. The flatness of $X \to S$ implies $\frm^i \cO_X = \frm^i \otimes_{A} \cO_X$, thus
\[ \frm^{i} \cO_X / \frm^{i+1} \cO_X = \frm^i \otimes_{A} \cO_{X} / \frm \cO_X = (\frm^i \otimes_A K') \otimes_{K'} \cO_{X_0}\]
where $K' = A / \frm$ is the residue field at $s$. Taking global sections of the above short exact sequence yields the following exact sequence
\[ 0 \too E_i \otimes_{K'} \Gamma(X_0, \cO_{X_0}) \too \Gamma(X_i, \cO_{X_i}) \too \Gamma(X_{i-1}, \cO_{X_{i-1}})\]
where $E_i := \frm^i \otimes_A K'$. Moreover since $X$ is Stein the above exact sequence is short exact by \Cref{Lemma:ClosedOfSteinIsStein}. The same arguments give an analogous exact sequence for $Y$ (without its short-exactness)  and a commutative and exact diagram
\[ 
\begin{tikzcd}
0 \ar[r] &  E_i \otimes_{K'} \Gamma(X_0, \cO_{X_0}) \ar[r] \ar[d, "\id \otimes \pi_{0}^\ast", "\wr"'] &  \Gamma(X_i, \cO_{X_i}) \ar[r] \ar[d, "\pi_i^\ast"] & \Gamma(X_{i-1}, \cO_{X_{i-1}}) \ar[d, "\pi_{i-1}^\ast"] \ar[r] & 0 \\
0 \ar[r] &  E_i \otimes_{K'} \Gamma(Y_0, \cO_{Y_0}) \ar[r] &  \Gamma(Y_i, \cO_{Y_i}) \ar[r] & \Gamma(Y_{i-1}, \cO_{Y_{i-1}}) 
\end{tikzcd}
\]
where the leftmost vertical arrow is an isomorphism by hypothesis. Now the proof goes by induction on the smallest $N \in \bbN$ for which $\frm^{N + 1} = 0$. If $N = 0$ there is nothing to prove. If $N \ge 1$, then $\pi^\ast_{N - 1}$ is an isomorphism by induction hypothesis. The Snake Lemma applied to the above diagram with $i = N$ gives the result.
\end{proof}

\begin{proof}[{Proof of \Cref{Thm:FunctionsOnAnalyticGroupsText}}] Let $G_{\ant}$ be the kernel of $\pi_G$. Suppose first that $G$ is reduced. The morphism $\pi_G \colon G \to G_{\aff}$ is smooth, thus so is $\pi_G^\an$ by \cite[Proposition 3.5.8]{BerkovichIHES}. For $s\in G_{\aff}^\an$ the fiber $G^\an_s$ of $\pi^\an_G$ in $s$ is the analytification
\[ G_s := G \times_{G_{\ant}} \Spec\cH(s).\]
The anti-affine algebraic group $G_{\ant} \times_K \cH(s)$ acts simply transitively on $G_s$, thus \Cref{Cor:FunctionsOnAntiAffineGroupsText} implies $\Gamma(G^\an_s, \cO_{G^\an_s}) = \cH(s)$ and \Cref{Lemma:FaithfullyFlatDescentFunctionConstantOnTheFibers} gives the desired result.

Suppose that $G_{\red}$ is a smooth subgroup of $G$ (this is always the case if $K$ is perfect, but fails to be true in general) and let $H := G / G_{\red}$.  The algebraic group $S$ is finite (hence affine) and connected, thus made of a single $K$-rational point $e$. Thus the projection $p \colon G \to H$ through a surjective morphism $q \colon G_{\aff} \to H$ sitting in the following commutative and exact diagram of algebraic groups:
\[
\begin{tikzcd}
0 \ar[r] & G_{\red} \ar[r] \ar[d, "{\pi_{G_{\red}}}"] & G \ar[r, "p"] \ar[d, "\pi_G"] & H \ar[r] \ar[d, equal] & 0 \\
0 \ar[r] & (G_{\red})_{\aff} \ar[r] & G_{\aff} \ar[r, "q"] & H \ar[r] & 0.
\end{tikzcd}
\]
Note that  the equality $(G_{\red})_{\aff} = (G_{\aff})_{\red}$ holds because the subgroup $G_{\ant}$ is anti-affine, thus reduced. Now \Cref{FunctionsOnDeformation} applied with $Y = G^\an$, $X = G_\aff^\an$, $S= H^\an$ and $\pi = \pi_G$ permits to conclude. It is licit to apply it because the morphisms $p^\an$ and $q^\an$ are smooth by \emph{loc.cit.} thus flat; the algebraic group $G_{\aff}$ is affine hence $X$ is Stein (\Cref{EquivalentDefinitionSteinAlgebraicVariety}); the morphism induced between the fibers at $e$ is $\pi_{G_{\red}}^\an$, thus by reduced case all analytic functions on $G_{\red}$ come from $(G_{\red})_{\aff}$.

In the general case there is a finite extension $K'$ of $K$ for which $G'_{\red}$ is a smooth subgroup of $G' = G \times_K K'$. Quite generally, for a morphism $f \colon Y \to X$ of $K$-analytic spaces, the natural map $\Gamma(X, \cO_X) \to \Gamma(Y, \cO_Y)$ is an isomorphism if and only if the homomorphism $\Gamma(X', \cO_{X'}) \to \Gamma(Y', \cO_{Y'})$ obtained by extending scalars to $K'$ is an isomorphism. Indeed $\Gamma(X', \cO_{X'}) = \Gamma(X, \cO_X) \otimes_K K'$ without need to complete because $K'$ is a finite extension of $K$, and similarly for $Y$. This permits to reduce to the preceding case and thus concludes the proof.
\end{proof}

\begin{corollary} \label{Cor:AlgebraicGroupSteinIFFAffine} An algebraic group $G$ over $K$ is Stein if and only if it is affine.
\end{corollary}

\begin{proof} By \Cref{Thm:FunctionsOnAnalyticGroupsText} any morphism $f \colon G^\an \to \bbA^{n, \an}_K$ factors through $\pi_G^\an$. If $f$ is a closed immersion, then so is $\pi_G^\an$. Thus $\pi_G^\an$ is an isomorphism being smooth and surjective. In particular, if $G$ is Stein then it is affine; the converse is obvious.
\end{proof}

\begin{corollary} \label{Thm:NeemanExampleIsNotSteinText} Let $G$ be an algebraic group, $L$ a line bundle on $G$ and $P$ the total space of $L$ deprived of its zero section. Then the following are equivalent:
\[ \text{$G$ is affine} \iff \text{$G$ is Stein} \iff \text{$P$ is affine} \iff \text{$P$ is Stein}.  \]
\end{corollary}



\begin{proof} The morphism $P \to G$ being affine, the first condition implies the third. The equivalence of the first and the second is \Cref{Cor:AlgebraicGroupSteinIFFAffine} while the equivalence of the second and the fourth is \Cref{Prop:ToricBundleStein}.
\end{proof}

\section{Finite vector bundles on an abelian scheme}

Let $S$ be a scheme, $\alpha \colon A \to S$ an abelian scheme and $e \colon S \to A$ the zero section.

\subsection{Constant and unipotent bundles} 
Let $f \colon X \to S$ be a faithfully flat morphism of schemes such that $\cO_S \to f_\ast \cO_X$ is an isomorphism.

\begin{definition} \label{Def:UnipotentVectorBundle} A vector bundle $V$ on $X$ is 
\begin{enumerate}
\item \emph{constant (over $S$)} if it is isomorphic to $f^\ast E$ for some vector bundle $E$ on $S$;
\item \emph{unipotent} if there is an increasing filtration $0 = V_0 \subsetneq V_1 \subsetneq \cdots \subsetneq V_{d} = V$ of $V$ by $\cO_X$-submodules such that, for $i = 1, \dots, d$, the $\cO_X$-module $V_{i} / V_{i-1}$ is constant over $S$.
\end{enumerate}
\end{definition}

Note that the $\cO_X$-modules $V_i$ are automatically locally free. The direct sum, tensor product, internal homs, symmetric and exterior powers of constant (resp. unipotent) vector bundles are constant (resp. unipotent).

\begin{remark} \label{Prop:BeingAPullBack} A vector bundle $V$ on $X$ is constant if and only if $f^\ast f_\ast V \to V$ is an isomorphism.
In particular, for a constant vector bundle $V$, the $\cO_S$-module $f_\ast V$ is a vector bundle.
\end{remark}

%

\begin{remark} \label{Prop:PushForwardOfConstantIsConstant} Let $p \colon S' \to S$ be a finite and locally free morphism of schemes and $q \colon X' = X \times_S S' \to X$ the second projection. For a vector bundle $V$ on $X'$ constant over $S'$, the vector bundle $q_\ast V$ on $X$ is constant over $S$.
\end{remark}

%

\begin{lemma} \label{Prop:GlobalSectionUnipotentBundleTwistedByAnIdeal} Let $V$ be a unipotent vector bundle on $X$ and $F$ an $\cO_X$-module such that $f_\ast F$ vanishes. Then $ f_\ast (F \otimes V) = 0$.
\end{lemma}

\begin{proof} Consider a short exact sequence of $\cO_X$-modules $0 \to V' \to V \to V'' \to 0$ where $V'$ and $V''$ are vector bundles such that $f_\ast(F \otimes V')$ and $f_\ast (F \otimes V'')$ vanish. Then, the sequence of $\cO_X$-modules $0 \to F \otimes V' \to F \otimes V \to F \otimes V'' \to 0$ is exact and, by left exactness of the push-forward along $f$, the $\cO_S$-module $f_\ast V$ vanish. By induction on the rank of $V$ and by definition of a unipotent vector bundle, one reduces to the case $V = f^\ast E$ for some vector bundle $E$ on $S$. The projection formula then yields $f_\ast (F \otimes f^\ast E) = f_\ast F \otimes E = 0$ which concludes the proof.
\end{proof}

\subsection{Finite vector bundles}

\begin{definition} Let $m \ge 1$ be an integer. A vector bundle $V$ on $A$ is \emph{$m$-finite} if the vector bundle $[m]^\ast V$ is constant, \emph{i.e.} it is isomorphic to $\alpha^\ast E$ for some vector bundle $E$ on $S$. It is \emph{finite} if it is $n$-finite for some integer $n \ge 1$.
\end{definition}

\begin{remark}
The direct sum, tensor product, internal homs, symmetric and exterior powers of ($m$-)finite vector bundles are ($m$-)finite.
\end{remark}

\begin{lemma} \label{Lemma:SplittingExtensionVectorBundles} Let $E, E'$ be vector bundles on $S$ and $V$ an extension of $\alpha^\ast E$ by $\alpha^\ast E'$. Assume there is an integer $m \ge 1$ vanishing on $S$.  Then, the vector bundle $V$ is $m$-finite and, if $S$ is moreover affine, then $[m]^\ast V$ is the trivial extension. 
\end{lemma}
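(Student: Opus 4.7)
The plan is to translate the statement into a cohomological assertion about the extension class of $V$ in $\Ext^1_{\cO_A}(\alpha^\ast E, \alpha^\ast E') = \rH^1(A, \alpha^\ast F)$ where $F := \cHom(E, E') = E^\vee \otimes E'$, and then exploit the Grothendieck--Leray edge map for the morphism $\alpha$ together with the well-known action of $[m]$ on $\rR^1 \alpha_\ast \cO_A$.

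\smallskip

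More precisely, the extension $V$ defines a class $[V] \in \rH^1(A, \alpha^\ast F)$. Since $\alpha \circ [m] = \alpha$, pulling back along $[m]$ yields an extension of $\alpha^\ast E$ by $\alpha^\ast E'$ with class $[m]^\ast [V] \in \rH^1(A, \alpha^\ast F)$. By \eqref{Eq:SESGrothendieckLeray}, there is an exact sequence
\[ 0 \too \rH^1(S, F) \stackrel{\alpha^\ast}{\too} \rH^1(A, \alpha^\ast F) \stackrel{p_F}{\too} \rH^0(S, \rR^1 \alpha_\ast \alpha^\ast F), \]
and, via the projection formula, $\rR^1 \alpha_\ast \alpha^\ast F \cong \rR^1 \alpha_\ast \cO_A \otimes F$. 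By functoriality of $p_F$ with respect to $[m]$, the identity
\[ p_F \bigl([m]^\ast [V] \bigr) = \bigl( [m]^\ast {\otimes} \id_F \bigr) \bigl(p_F([V]) \bigr) \]
holds in $\rH^0(S, \rR^1 \alpha_\ast \cO_A \otimes F)$, where on the right-hand side $[m]^\ast$ denotes the induced endomorphism of the $\cO_S$-module $\rR^1 \alpha_\ast \cO_A$.

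\smallskip

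The key input is then the classical fact that $[m]^\ast$ acts on $\rR^1 \alpha_\ast \cO_A$ as multiplication by $m$; this may be seen via the canonical identification $\Phi_A \colon \Lie \check{A} \stackrel{\sim}{\to} \rR^1 \alpha_\ast \cO_A$ (Theorem \ref{Thm:IsomorphismLieAlgebraDualAbelianScheme}), noting that $[m]$ on $A$ is Cartier dual to $[m]$ on $\check{A}$, which in turn induces multiplication by $m$ on $\Lie \check{A}$. Under the hypothesis that $m$ vanishes on $S$, this operator is therefore zero, whence $p_F([m]^\ast [V]) = 0$. By exactness, $[m]^\ast [V] = \alpha^\ast [V_0]$ for some $[V_0] \in \rH^1(S, F)$, that is, $[m]^\ast V$ is isomorphic, as an extension, to $\alpha^\ast V_0$ for some extension $V_0$ of $E$ by $E'$ on $S$. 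In particular, $[m]^\ast V$ is a constant vector bundle, hence $V$ is $m$-finite.

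\smallskip

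If moreover $S$ is affine, then the quasi-coherent cohomology $\rH^1(S, F)$ vanishes, so $[V_0] = 0$ and the extension $V_0$ on $S$ splits; pulling back along $\alpha$ shows that $[m]^\ast V$ splits as well, as claimed. The only non-routine ingredient is the computation of $[m]^\ast$ on $\rR^1 \alpha_\ast \cO_A$, which is standard for abelian schemes but deserves to be flagged as the main point on which the proof hinges.
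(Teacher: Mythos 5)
Your argument is correct and is essentially the paper's own proof: both identify the extension class in $\rH^1(A,\alpha^\ast\cHom(E,E'))$ and use the fact that $[m]^\ast$ acts as multiplication by $m$ on $\rR^1\alpha_\ast\cO_A$, which vanishes since $m=0$ on $S$. The only cosmetic difference is that the paper first reduces to affine $S$ (constancy being local on the base, via Proposition \ref{Prop:BeingAPullBack}), so that $\rH^1(S,F)=0$ and the Leray edge map is injective, whereas you keep the $\rH^1(S,F)$ term and observe that the class lands in the image of $\alpha^\ast$ — both routes are fine.
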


\begin{proof} Condition (2) in \Cref{Prop:BeingAPullBack} is local on the base, hence the scheme $S$ may be assumed to be affine. The pull-back by $[m]$ acts on the cohomology group
\[ \rH^1(A, \cHom(\alpha^\ast E, \alpha^\ast E')) = \rH^0(S, \rR^1\alpha_\ast  \cHom(\alpha^\ast E, \alpha^\ast E'))\] as the multiplication by the integer $m$. Since $m = 0$ on $S$, the vector bundle $[m]^\ast V$ is isomorphic to the trivial extension of $\alpha^\ast E$ by $\alpha^\ast E'$.
\end{proof}

\begin{lemma} \label{Lemma:TrivialPullbackUnipotentBundle} Assume there is an integer $m \ge 1$ vanishing $S$.  Then, a unipotent vector bundle $V$ on $A$ of rank $r$ is $m^r$-finite.
\end{lemma}

\begin{proof} Let $0 = V_0 \subsetneq V_1 \subsetneq \cdots \subsetneq V_{d} = V$ a filtration as in the definition of a unipotent vector bundle. The vector bundles $V_i$ are themselves unipotent, thus, arguing by induction on the rank of $V$, one reduces to the case where $V$ is an extension of constant vector bundles. \Cref{Lemma:SplittingExtensionVectorBundles} then permits to conclude.
\end{proof}

\begin{lemma} Let $0 \to V' \to V \to V'' \to 0$ be an exact sequence of vector bundles on $A$ with $V'$ and $V''$ finite. If some integer $m \ge 1$ vanishes on $S$, then $V$ is finite.
\end{lemma}

\begin{proof} Let $n \ge 1$ be an integer such that the vector bundles $[n]^\ast V'$ and $[n]^\ast V''$ are constant. Then $[n]^\ast V$ is unipotent and \Cref{Lemma:TrivialPullbackUnipotentBundle} permits to conclude.
\end{proof}

\begin{lemma} \label{Prop:PushForwardFiniteIsFinite} Let $f \colon S' \to S$ be a finite and locally free morphism of schemes and $V'$ an $m$-finite vector bundle $A' := A \times_S S'$ for some integer $m \ge 1$. Then the vector bundle $g_\ast V'$ on $A$ is $m$-finite, where $g \colon A' \to A$ is the base-change morphism.
\end{lemma}

\begin{proof} Let $[m]_A$ and $[m]_{A'}$ temporarily denote the multiplication-by-$m$ map respectively on $A$ and $A'$. By flat base change along the morphism $[m]_A$, the natural homomorphism of $\cO_A$-modules $[m]_A^\ast g_\ast V' \to g_\ast [m]_{A'}^\ast V'$ is an isomorphism. Applying \Cref{Prop:PushForwardOfConstantIsConstant} to the constant vector bundle $[m]_{A'}^\ast V'$, the vector bundle $[m]_A^\ast g_\ast V'$ is constant, thus the vector bundle $g_\ast V'$ is $m$-finite.
\end{proof}

\subsection{Universal cover and Tate group scheme} \label{Def:pAdicGroupScheme} Let $p$ be a prime number.

\begin{itemize}
\item The \emph{universal cover} of $A$ is the projective limit of the projective system $( [m] \colon A \to A)_{m \in \bbN \smallsetminus \{ 0 \}}$, the partial order on $\bbN \smallsetminus \{ 0 \}$ being divisibility. 

\item The \emph{Tate group scheme} $\rT A$ is the projective limit of the finite flat group $S$-schemes $A[m]$, for  $m \in \bbN \smallsetminus \{ 0 \}$, the transition maps $A[n] \to A[m]$ being multiplication by $\frac{n}{m}$ whenever $m$ divides $n$.

\item The \emph{$p$-adic universal cover of $A$} is the projective limit of the projective system $( [p] \colon A \to A)_{i \in \bbN}$ with the usual order on $\bbN$. 

\item The \emph{$p$-adic Tate group scheme $\rT_p A$ of $A$} is the limit of the projective system of group $S$-schemes $(A[p^i], [p] \colon A[p^{i+1}] \to A[p^i])_{i \in \bbN}$. 
\end{itemize}

The existence of such projective limits is granted by the finiteness  of the transition maps ({\cite[\href{https://stacks.math.columbia.edu/tag/01YX}{Lemma 01YX}]{stacks-project}}). The canonical projections $\rT A \to \rT_p A$ induce an isomorphism $\rT A \cong \prod_{\ell \textup{ prime}} \rT_\ell A$ of group $S$-schemes, always treated as understood in what follows. Let $(T_i, \tau_{ji} \colon T_j \to T_i)_{i \in I}$ be an inverse system of affine $S$-schemes, $T = \projlim_{i \in I} T_i$, and, for $i \in I$, $\pr_i \colon T \to T_i$ the canonical projection.

\begin{lemma}[{\cite[\href{https://stacks.math.columbia.edu/tag/01ZC}{Proposition 01ZC}]{stacks-project}}]\label{Prop:MorphismToFiniteTypeFactorsAtFiniteLevel} Suppose $S$ quasi-compact. Let $X$ be a finitely presented quasi-compact $S$-scheme and $f \colon T \to X$ a morphism of $S$-schemes. Then $f= f_i \circ \pr_i$ for some $i \in I$ and some morphism of $S$-schemes $f_i \colon T_i \to X$.
\end{lemma}

\begin{lemma}[{\cite[\href{https://stacks.math.columbia.edu/tag/01YZ}{Lemma 01YZ}]{stacks-project}}]  \label{Prop:BaseChangeProjLimit} For an $S$-scheme $S'$, $T_{S'} = \projlim_{i \in I} (T_i)_{S'}$.
\end{lemma}

In particular, the formation of the ($p$-adic) universal cover and the ($p$-adic) Tate group scheme of $A$ is compatible with base change.

\begin{lemma} \label{Prop:SESUniversalCoverAbelianScheme} Let $\nu \colon \tilde{A} \to A$ the be the projection onto the first factor. Then, the following sequence of group $S$-schemes is short exact:
\[ 0 \too \rT A \too \tilde{A} \stackrel{\nu}{\too} A \too 0.\]
\end{lemma}

\begin{proof} For an $S$-scheme $X$ let $h_X$ be the functor of points of $X$. By faithfully flat descent $h_X$ is a fppf sheaf on the category of $S$-schemes. The statement means that the sequence  $0 \to h_{\rT A} \to h_{\tilde{A}} \to h_{A} \to 0$ of fppf sheaves of abelian groups is short exact. For $m \ge 1$, the sequence $ 0 \to h_{A[m]} \to h_{A} \to h_A \to 0$ of fppf sheaves of abelian groups  is short exact, where the arrow $h_A \to h_A$ is the multiplication by~$m$. Note that the Mittag-Leffler condition is satisfied as, for $n \ge 1$, the fppf sheaves homomorphisms $n \colon h_{A} \to h_A$ and $n \colon h_{A[mn]} \to h_{A[m]}$ are surjective. Passing to the limit gives the desired result.
\end{proof}

\subsection{Finite vector bundles as representations}\label{Sec:RepresentationUnipotentModPn}  For an integer $m \ge 1$, the map $[m] \colon A \to A$ makes $A$ an fppf principal $A[m]$-bundle over itself. It is  an \emph{\'etale} principal $A[m]$-bundle as soon as $m$ is invertible on $S$. For an $m$-finite vector bundle $V$ on $A$, the vector bundle $[m]^\ast V$ is constant and naturally endowed with an $A[m]$-linearization. By \cite[Exp. I, 6.6]{SGA3} the vector bundle $\alpha_\ast [m]^\ast V$ on $S$ inherits an $A[m]$-linearization by push-forward along $\alpha$. Endow the vector $e^\ast V$ on $S$ with the $A[m]$-linearization deduced from the one of $\alpha_\ast [m]^\ast V$ via evaluation at the zero section
\[ \alpha_\ast [m]^\ast V \stackrel{\sim}{\too} e^\ast [m]^\ast V = e^\ast V.\]
Since the action on $A[m]$ on $S$ is trivial, this corresponds to a representation 
\[ \rho_{V, m} \colon A[m] \too \GL(e^\ast V).\]
The composite map
\[ \rho_V \colon \begin{tikzcd}  \rT A \ar[r, "{\pr_m}"] & A[m] \ar[r, "\rho_{V, m}"] & \GL(e^\ast V). \end{tikzcd}  \]
is independent of the chosen integer $m$, because $\rho_{V, mn} = \rho_{V, m} \circ [n]$ for $n \in \bbN \smallsetminus \{ 0 \}$. The construction of $\rho_V$ is functorial: given a homomorphism $\phi \colon V \to W$ between finite vector bundles on $A$ the homomorphism $e^\ast \phi \colon e^\ast V \to e^\ast W$ is $\rT A$-equivariant.

\begin{proposition} \label{Prop:RepresentationEssentiallyTrivialVectorBundle} Suppose that the scheme $S$ is quasi-compact. Then, the functor
\[
\left\{  \textup{finite vector bundles on $A$}  \right\} \too \left\{  \textup{representations of $\rT A$} \right\}, \; V \longmapsto \rho_V,
\]
is an equivalence of categories. Furthermore,

\begin{enumerate}
\item it preserves direct sums, tensor products, internal homs, symmetric, exterior powers, and is compatible with arbitrary base change; 
\item  for an $m$-finite vector bundle $V$, the representation $\rho_V$ factors through $A[m]$;
\item  for a finite vector bundle $V$, the homomorphism $\alpha_\ast V \to e^\ast V$ given by evaluation at $e$ induces an isomorphism of $\cO_S$-modules
\[ \alpha_\ast V \cong (e^\ast V)^{\rT A}. \]
\end{enumerate}
\end{proposition}

\begin{proof} Property (2) holds by design. By \Cref{Prop:MorphismToFiniteTypeFactorsAtFiniteLevel} a representation of $\rT A$ factors through a representation of $A[m]$ for some integer $m \ge 1$. Therefore it suffices to prove that, for each $m \ge 1$, the functor
\[
\left\{  \textup{$m$-finite vector bundles on $A$}  \right\} \too \left\{  \textup{representations of $A[m]$} \right\}, \; V \longmapsto \rho_{V,m},
\]
is an equivalence of categories satisfying properties (1) and (3). This is essentially faithfully flat descent for modules on the the fppf principal $A[m]$-bundle $[m] \colon A \to A$. Indeed it states that $V \mapsto [m]^\ast V$ is an equivalence of categories between quasi-coherent $\cO_A$-modules and quasi-coherent $\cO_A$-modules endowed with an $A[m]$-linearization \cite[Theorem 4.46]{FGAExplained}. When $V$ is an $m$-finite vector bundle such a linearization can be read of from $\alpha_\ast [m]^\ast V \cong e^\ast V$ because $[m]^\ast V$ is constant. The linearization on $e^\ast V$ corresponds by definition to the representation $\rho_{V, m}$ whence the result.
\end{proof}

\begin{example}[Weil's pairing redux] Let $\check{A}$ be the dual abelian scheme and, for an integer $m \ge 1$,  $\cL_m$  the restriction to $A \times_S \check{A}[m]$ of the Poincar\'e line bundle $\cL$ on $A \times_S \check{A}$. The line bundle $\cL_m$ on the abelian scheme $A \times_S \check{A}[m]$ over $\check{A}[m]$ is $m$-torsion, therefore its pull back $([m], \id)^\ast \cL_m$ on $A \times_S \check{A}[m]$ is constant. In other words, the line bundle $\cL_m$ is $m$-finite and the corresponding representation $\rho_{\cL_m, m}$ is a morphism of group $\check{A}[m]$-schemes
\[ \rho_{\cL_m, m} \colon A[m] \times_S \check{A}[m] \too \bbG_{m, S} \times_S \check{A}[m].\]
The \emph{Weil pairing} can be interpreted as the composite map
\[ \langle -, - \rangle_{A[m]} := \pr_1 \circ \rho_{\cL_m, m} \colon A[m] \times_S \check{A}[m] \too \bbG_{m, S}, \]
where $\pr_1 \colon \bbG_{m, S} \times_S \check{A}[m] \to \bbG_{m, S}$ is the first projection. Let $A[m]^\ast$ be the Cartier dual of the finite locally free group $S$-scheme $A[m]$. The Weil pairing induces an isomorphism of group $S$-schemes  $\check{A}[m] \to A[m]^\ast$ (see \cite[Theorem 1.1]{OdaASENS}).

\end{example}

\begin{definition} \label{Def:UnipotentRepresentation} 
A representation $\rho \colon G \to \GL(E)$ is \emph{unipotent} if there is an increasing filtration $0 = E_0 \subsetneq E_1 \subsetneq \cdots \subsetneq E_d = E$ by $\cO_S$-submodules of $E$ such that, for $i = 1, \dots, d$,
\begin{itemize}
\item $E_i$ and $E_i / E_{i-1}$ are vector bundle;
\item the vector bundle $E_i$ is stable under the action of $G$;
\item the induced representation $\rho \colon G \to \GL(E_i / E_{i-1})$ is trivial.
\end{itemize}
\end{definition}

\begin{corollary} \label{Cor:UnipotentAsRepresentationTateGroupScheme} Suppose that the scheme $S$ quasi-compact and that there is an integer $m \ge 1$ vanishing on $S$. Then, the functor
\[
\left\{  \textup{unipotent vector bundles on $A$}  \right\} \to \left\{  \textup{unipotent representations of $\rT A$} \right\}, \; V \mapsto \rho_V,
\]
is an equivalence of categories. Furthermore,

\begin{enumerate}
\item it preserves direct sums, tensor products, internal homs, symmetric and exterior powers, and is compatible with arbitrary base change; 
\item if $V$ is a unipotent vector bundle of rank $r$, then $\rho_V$ factors through $A[m^{r}]$;
\item if $V$ is an extension of constant vector bundles, then $\rho_V$ factors via $A[m]$.
\end{enumerate}
\end{corollary}

\begin{proof} This is an immediate consequence of \Cref{Prop:RepresentationEssentiallyTrivialVectorBundle}.
\end{proof}

Let $f \colon S' \to S$ be a  finite and locally free morphism, $m \ge 1$ an integer and $V'$ an $m$-finite vector bundle on $A' := A \times_S S'$. By \Cref{Prop:PushForwardFiniteIsFinite} the vector bundle $V:= g_\ast V'$ on $A$ is $m$-finite where $g \colon A' \to A$ is the base-change morphism. It will be useful to have an expression for $\rho_{V, m}$ in terms of $\rho_{V', m}$. For, consider the Weil restriction $\Res_{S'/S} A'[m]$ of $A'[m]$ along $f$ \cite[7.6]{NeronModels}. The `adjunction formula' \cite[Lemma 7.6.1]{NeronModels} gives a closed immersion
\[\epsilon_m \colon A[m] \too \Res_{S'/S} A'[m]\] 
which is also a morphism of group $S$-schemes. Pushing-forward along $f$ a linear automorphism of $e'^\ast V'$, where $e'$ is the zero section of $A'$, yields a morphism of group $S$-schemes
\[ j \colon \Res_{S'/S} \GL(e'^\ast V') \too \GL(e^\ast V),\]
where $f_\ast e'^\ast V' \cong e^\ast V$ by flat base change, which is a closed immersion.


\begin{proposition} \label{Prop:RepresentationWeilRestriction} If $\Res_{S'/S} A[m]$ is flat over $S$, then
\[ \rho_{V, m} = j \circ \Res_{S'/S} \rho_{V, m} \circ \epsilon_m.\]
\end{proposition}

\begin{proof} Let $\lambda$ and $\lambda'$ be the natural linearization respectively of $[m]^\ast V$ and $[m]^\ast V'$ under the action of $A[m]$ and $A'[m]$. Then $\lambda =  (g_m \times g)_\ast \lambda'$ where $g_m \colon A'[m] \to A[m]$ is the restriction of $g$. Now the representation $\rho_{V, m}$ corresponds to the natural $A[m]$-linearization of $\alpha_\ast [m]^\ast V$ given by
\[ (\id_{A[m]} \times \alpha)_\ast \lambda = (\id_{A[m]} \times \alpha)_\ast (g_m \times g)_\ast \lambda' = (g_m \times f)_\ast (\id_{A'[m]}, \alpha')_\ast \lambda'\]
where $\alpha' \colon A' \to S'$ is the structural morphism. The $A'[m]$-linearization $(\id, \alpha')_\ast \lambda'$ of $\alpha'_\ast [m]^\ast V'$ is the one corresponding to the representation $\rho_{V', m}$. Rather generally, consider a finite flat group $S$-scheme $G$ together with a representation \[r \colon G' := G \times_S S' \too \GL(E)\] where $E$ is a vector bundle on $S'$. Provided that $\Res_{S'/S}G'$ is flat over $S$ \cite[Exp. I, 6.6]{SGA3}, pushing-forward along $h \times f$ the corresponding $G'$-linearization of $E$ is permitted and defines a representation of $G$ given by the composite map
\[ G \too \Res_{S'/S} G' \stackrel{\Res r}{\too} \Res_{S'/S} \GL(E) \too \GL(f_\ast E),\]
where the first arrow is given by adjunction, the second by functoriality of the Weil restriction and the third by pushing-forward linear automorphisms of $E$.
\end{proof}

The hypothesis of \Cref{Prop:RepresentationWeilRestriction} is fullfilled when $f$ is \'etale \cite[Proposition 7.6.5]{NeronModels}. In this paper it will be applied when $F$ is a vector bundle on $S$ and $S'$ is the first order thickening of the $S$-scheme $\bbV(F)$ along its zero section. Of course $S' \to S$ is not \'etale in this case; however, for a finitely presented $S$-scheme $X$ for which the relative tangent bundle $\cT_{X/S}$ is a vector bundle on $X$, 
\[ \Res_{S'/S} X = \bbV(\cT_{X/S} \otimes F).\]

\section{The representation of the canonical extension} \label{Sec:RepresentationCanonicalExtensionAlgebraic}

Let $S$ be a scheme, $\alpha \colon A \to S$ an abelian scheme $e \colon S \to A$ the zero section.

\subsection{The canonical extension} \label{Sec:CanonicalExtension} A \emph{homogeneous line bundle} on $A$ is a line bundle together with an isomorphism $\pr_1^\ast L \otimes \pr_2^\ast L \cong \mu^\ast L$ where $\mu$ is the law group on $A$. Homogenous line bundles are parameterized by the dual abelian scheme $\check{A}$ and the Poincar\'e bundle $\cL$ is the universal homogeneous line bundle on $A \times_S \check{A}$. Consider the Atiyah extension of the line bundle $\cL$
\[ 0 \too \Omega^1_{A \times \check{A}/A} \too \At_{A \times \check{A}/A}(\cL) \too \cO_{A \times \check{A}} \to 0 \]
where $A \times_S \check{A}$ is seen as an abelian scheme over $A$ via the first projection $p$. Pushing-forward the above short exact sequence along $p$ yields a sequence
\begin{equation} \tag{$\cU_A$}
 0 \too \alpha^\ast \omega_{\check{A}} \too \cU_A := p_\ast \At_{A \times \check{A}/A}(\cL) \too \cO_A \too 0 
\end{equation}
which is still short exact by \cite[Proposition 1.3 (2)]{RigidAnalyticUniversalVectorExtension}, where $\omega_{\check{A}}$ is the dual of the Lie algebra of $\check{A}$ and $p_\ast \Omega^1_{A \times \check{A}/A} = \alpha^\ast \omega_{\check{A}}$.

\begin{definition} The unipotent bundle $\cU_A$ is called the \emph{canonical extension} of $\cO_A$.
\end{definition}

It earns the name canonical because any other extension of $\cO_A$ by some constant vector bundle is obtained by push-out of the sequence $(\cU_A)$ \cite[Theorem 1.12]{RigidAnalyticUniversalVectorExtension}. Here it will crucial to have another description for $\cU_A$. For, let $\check{A}_1$ the first-order thickening of $\check{A}$ along its zero section, $\iota \colon A \times \check{A}_1  \to A \times_S \check{A}$ the the closed immersion and $\pi \colon A \times_S \check{A}_1 \to A$ the first projection. Then $\pi_\ast \iota^\ast \cL$ sits in the following short exact sequence
\[ 0 \too \alpha^\ast \omega_{\check{A}} \too \pi_\ast \iota^\ast \cL \too \cO_A \too 0 \]
which is seen to be canonically isomorphism to $(\cU_A)$ \cite[Remark 1.7]{RigidAnalyticUniversalVectorExtension}. The main interest of the canonical extension is that the associated affine bundle
\[ A^\natural := \bbP(\cU_A) \smallsetminus \bbP(\alpha^\ast \omega_{\check{A}})\]
parameterizes homogeneous line bundles on $\check{A}$ endowed with a connection \cite[Theorem 1.9]{RigidAnalyticUniversalVectorExtension}. The tensor product of line bundles with connections endows $A^\natural$ with the structure of a vector extension of $\check{A}$. As such it is canonically isomorphic to the universal vector extension of $\check{A}$ \cite[Theorem 1.20]{RigidAnalyticUniversalVectorExtension} and thus henceforth identified with it.

\subsection{The universal vector hull of a finite group scheme} Let $G$ be a finite and locally free commutative group $S$-scheme. Consider the  the \emph{Cartier dual} of $G$, namely the $S$-group scheme representing the functor 
\[ (\textup{Sch}/S) \too (\textup{Groups}), \qquad S'\longmapsto \Hom_{S'\textup{-groups}}(G_{S'}, \bbG_{m, S'}).\] Let  $\omega_{G^\ast}$ the dual of the $\Lie G^\ast$ and and $G_1^\ast$ the first order thickening of $G^\ast$ along its zero section $e^\ast$. Consider the morphism of $G^\ast$-schemes
\[
f  \colon G \times_S G^\ast \too  \bbG_{m, S} \times_S G^\ast, \qquad (g, \chi) \longmapsto (\chi(g), \chi),
\]
and let $f_1 \colon G \times_S G_1^\ast \to \bbG_{m, S} \times_S G^\ast_1$ be its restriction to $G_1^\ast$. The $S$-scheme $G_1^\ast$ is identified with the first-order thickening of $\bbV(\omega_{G^\ast})$ along its zero section. Therefore,
\begin{align*}
\Res_{G_1^\ast / S} (G \times_S G_1^\ast) &= \bbV(\cT_{G/S} \otimes \omega_{G^\ast}), \\
\Res_{G_1^\ast / S} (\bbG_{m, S} \times_S G_1^\ast) &= \bbV(\cT_{\Gm/S}  \otimes  \omega_{G^\ast}),
\end{align*}
where $\cT_{G/S}$ and $\cT_{\Gm/S}$ denote the tangent bundles respectively of $G$ and $\Gm$. The restriction of $f$ to the fiber at $e^\ast$ is by definition the trivial character on $G$. Thus the morphism $\Res_{G_1^\ast / S} f_1 \colon \bbV(\cT_{G/S} \otimes  \omega_{G^\ast}) \to \bbV(\cT_{\Gm/S}  \otimes  \omega_{G^\ast})$ factors through a morphism $S$-schemes
\[ \theta \colon G \too \bbV(\Lie \Gm \otimes  \omega_{G^\ast}) = \bbV(\cHom(\omega_{\Gm}, \omega_{G^\ast})). \]

\begin{definition} The \emph{universal vector hull} of $G$ is the composite morphism
\[ \theta_{G} \colon G \stackrel{\theta}{\too} \bbV(\cHom(\omega_{\Gm}, \omega_{G^\ast})) \too \bbV(\omega_{G^\ast}),  \]
where the latter arrow is the evaluation at the $1$-form $\frac{\rd z}{z}$.
\end{definition}
Concretely for an $S$-scheme $S'$ and $g \in G(S')$ the identity $\theta_G(g) = \ev_g^\ast \tfrac{\rd z}{z}$ holds where $\ev_g \colon G^\ast_{S'} \to \bbG_{m, S'}$ is the evaluation at $g$ of a character of $G$. The map $\theta_G$ owes its name to the following property (see \cite[Proposition 1.4]{MazurMessing}):

\begin{proposition} \label{Prop:UniversalPropertyUniversalVectorHull} Let $f \colon G \to \bbV(E)$ be a morphism of group $S$-schemes for a vector bundle $E$ on $S$. Then $f = \phi \circ \theta_{G}$ for a unique homomorphism $\phi \colon \omega_{G^\ast} \to E$.
\end{proposition}

\subsection{The representation of the canonical extension} \label{sec:RepresentationCanonicalExtensionScheme}  Let $m \ge 1$ be an integer vanishing on $S$.  Consider the canonical extension
\begin{equation*} \tag{$\cU_A$}
0 \too \alpha^\ast \omega_{\check{A}} \too \cU_A \too \cO_A \too 0.
\end{equation*}
By \Cref{Cor:UnipotentAsRepresentationTateGroupScheme} the vector bundle corresponds to a unipotent representation
\[ \rho_{\cU_A, m} \colon A[m] \too \GL(e^\ast \cU_{A}).\]
The aim of this section is to compute it. In order to do so, for an $S$-scheme $f \colon S' \to S$ and an $S'$-valued point of $t$ of $A[m]$, note that $\rho_{\cU_A,m}(t)$ is an automorphism the short exact sequence $f^\ast e^\ast (\cU_A)$ inducing the identity on $f^\ast \omega_A$ and $\cO_{S'}$:
\[
\begin{tikzcd}
0 \ar[r] & f^\ast \omega_A \ar[r] \ar[d, equal] & f^\ast e^\ast \cU_A \ar[r] \ar[d, "\rho_{\cU_A,m}(t)"] & \cO_{S'} \ar[d, equal] \ar[r] & 0\ \\
0 \ar[r] & f^\ast \omega_A \ar[r] & f^\ast e^\ast \cU_A \ar[r] &  \cO_{S'} \ar[r] & 0.
\end{tikzcd}
\]
 Therefore the difference $\rho_{\cU_A, m} - \id$ factors through a morphism of group $S$-schemes 
\[ \theta_{\cU_A} \colon A[m]\too \bbV(\omega_{\check{A}}).\] 
Identify the Cartier dual $A[m]^\ast$ of $A[m]$ with $\check{A}[m]$ via the Weil pairing. Since $m$ vanishes on $S$, the monomorphism $\omega_{\check{A}[m]} \to \omega_{\check{A}}$ is an isomorphism which will be implicit from now on. Keeping these identifications in mind, let
\[\theta_{A[m]} \colon A[m] \too \bbV(\omega_{\check{A}})\] 
be the universal vector hull of the finite locally free group $S$-scheme $A[m]$.

\begin{theorem} \label{Thm:RepresentationUniversalVectorExtension} With the notation above,
\[ \theta_{\cU_A} = \theta_{A[m]}. \]
\end{theorem}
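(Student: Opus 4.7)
The strategy has two components. First, by the universal property of the universal vector hull (Proposition \ref{Prop:UniversalPropertyUniversalVectorHull}), since $\theta_{\cU_A} \colon A[m] \to \bbV(\omega_{\check A})$ is a morphism of commutative group $S$-schemes into a vector group, there is a unique $\cO_S$-linear map $\phi \colon \omega_{\check A[m]} \to \omega_{\check A}$ such that $\theta_{\cU_A} = \phi \circ \theta_{A[m]}$; via the canonical identification $\omega_{\check A[m]} = \omega_{\check A}$ (valid because $m=0$ on $S$), the map $\phi$ becomes an endomorphism of $\omega_{\check A}$, and it suffices to show $\phi = \id$. Second, I would compute $\theta_{\cU_A}$ explicitly using the alternative description $\cU_A \cong \pi_\ast(\iota^\ast \cL)$ from Remark \ref{Rmk:AlternativeDescriptionCanonicalExtension}, with $\iota \colon A \times_S \check A_1 \hookrightarrow A \times_S \check A$ the embedding of the first-order thickening of $\check A$ along its origin and $\pi \colon A \times_S \check A_1 \to A$ the first projection. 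A key preliminary observation is that $\check A_1$ factors through $\check A[m]$---the hypothesis $m=0$ on $S$ forces $[m] \colon \check A \to \check A$ to vanish on $\check A_1$---so $\iota^\ast \cL$ is pulled back from $\cL_m := \cL|_{A \times_S \check A[m]}$.

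Next I would trace the $A[m]$-linearization through these identifications. Flat base change along $[m] \colon A \to A$ yields
\[
[m]^\ast \cU_A \cong \pi_\ast\bigl(([m] \times \id)^\ast \iota^\ast \cL\bigr),
\]
and the $A[m]$-linearization on $[m]^\ast \cU_A$ is induced by translation in the $A$-factor. Restricting this $A[m]$-linearized line bundle to $A \times_S \check A[m]$ is by construction the datum defining the representation $\rho_{\cL_m, m} \colon A[m] \times_S \check A[m] \to \Gm \times_S \check A[m]$---that is, the Weil pairing $\langle -, - \rangle_{A[m]}$. Pushing forward along $\pi$ and restricting to the zero section $e$ of $A$, combined with the rigidification of $\cL$ along the zero section of $\check A$, gives a canonical splitting $e^\ast \cU_A \cong \omega_{\check A} \oplus \cO_S$. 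The representation $\rho_{\cU_A, m}$ is upper-triangular with identity on sub and quotient (because these are pulled back from $S$), and its off-diagonal entry encodes the infinitesimal behavior on $\check A_1$ of the character $\langle t, -\rangle \colon \check A[m] \to \Gm$; concretely, $t \mapsto \ev_t^\ast \tfrac{\rd z}{z}$. By Theorem \ref{Thm:WeilPairingCartierDual}, which identifies $\check A[m] = A[m]^\ast$, this is exactly $\theta_{A[m]}(t)$ by the very definition of the universal vector hull.

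The main obstacle is the careful bookkeeping of the interlocking identifications: verifying that the flat base change isomorphism respects the $A[m]$-linearizations under $\pi_\ast$ (cf.\ \ref{sec:PullBackLinearization}--\ref{sec:PushForwardLinearization}), that the natural splitting of $e^\ast \cU_A$ arising from the rigidification of $\cL$ at the origin of $\check A$ coincides with the splitting used to write the matrix in the theorem statement, and that the infinitesimal form at $\check A_1$ of the representation $\rho_{\cL_m, m}$ matches the evaluation pairing $\ev_{\bullet}^\ast \tfrac{\rd z}{z}$ entering the definition of $\theta_{A[m]}$. No single verification is deep, but each requires disentangling a different facet of the Poincaré bundle's self-duality and its rigidification, so that at the end $\phi$ is read off as $\id$ with no residual twist.
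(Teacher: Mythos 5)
Your proposal is correct and follows essentially the same route as the paper's proof: both identify $\cU_A$ with $\pi_\ast \iota^\ast \cL$ via Remark \ref{Rmk:AlternativeDescriptionCanonicalExtension}, exploit the equality $\check{A}_1 = \check{A}[m]_1$ forced by $m = 0$ on $S$, and recognize the infinitesimal form of the Weil pairing $\rho_{\cL_m, m}$ along $\check{A}[m]_1$ as $\theta_{A[m]}$ via the Cartier-duality identification $\check{A}[m] \cong A[m]^\ast$ and the $\ev^\ast \tfrac{\rd z}{z}$ description of the universal vector hull. The bookkeeping you flag is exactly what the paper's Proposition \ref{Prop:RepresentationWeilRestriction} (restriction of scalars for representations along the thickening $\check{A}_1 \to S$) is designed to absorb; your preliminary reduction to $\phi = \id$ via Proposition \ref{Prop:UniversalPropertyUniversalVectorHull} is a harmless repackaging that the paper skips by computing $\rho_{\cU_A}$ directly.
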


\begin{proof} Let $\cL_m$ be the restriction to $A \times_S \check{A}[m]$ of the Poincar\'e bundle $\cL$. The line bundle $\cL$ on the abelian scheme $A \times_S \check{A}[m]$ on $\check{A}[m]$ is $m$-torsion and the resulting morphism of group $S$-schemes
\[ \rho := \rho_{\cL_m, m} \colon A[m] \times_S \check{A}[m] \to \bbG_{m, S} \times_S \check{A}[m]\]
is the one defining the Weil pairing. Let $\check{A}[m]_1$ be the first order thickening of $\check{A}[m]$ along its zero section and 
\[ \rho_{1} \colon A[m] \times_S \check{A}[m]_1 \too \bbG_{m,S} \times_S \check{A}[m]_1 \]
the morphism of group $\check{A}[m]_1$-schemes deduced from $\rho$ by base change along the closed immersion $\check{A}[m]_1 \to \check{A}[m]$.  Allowing for the isomorphism $ \check{A}[m] \cong A[m]^\ast$ induced by Weil's pairing and the trivialization $\cO_S \cong \omega_{\Gm}$ given $\frac{\rd z}{z}$, by definition of the universal vector hull of the finite group scheme $A[m]$,
\[ \Res_{\check{A}[m]_1/S} \rho_1 = \theta_{A[m]} \circ p,\]
where $p \colon \bbV(\cT_{A[m]/S} \otimes \omega_{\check{A}[m]}) \to A[m]$ is the projection. Now $m = 0$ on $S$, thus
\[
\omega_{\check{A}[m]} = \omega_{\check{A}}, \qquad \check{A}[m]_1 = \check{A}_1,
\]
where $\check{A}_1$ is the first order thickening of $\check{A}$ along the zero section. Let $\iota \colon \check{A}_1 \to \check{A}$ be the closed immersion and $\pi \colon \check{A}_1 \to S$ the structural morphism. As explained in \Cref{Sec:CanonicalExtension} the canonical extension $\cU_{A}$ can be identified with the vector bundle
\[ \cU_A \cong (\id_A, \pi)_\ast (\id_A,\iota)^\ast \cL.\]
Because of the equality $\check{A}_1 = \check{A}[m]_1$ the line bundle $(\id_A, \iota)^\ast \cL$ on the abelian scheme $A \times_S \check{A}_1$ over $\check{A}_1$ is the restriction of $\cL_m$ to $A \times_S \check{A}[m]_1$ and thus $m$-torsion.  Therefore with the notation of \Cref{Prop:RepresentationWeilRestriction} the representation $\rho_{\cU_A}$ is 
\[ \rho_{\cU_A} = j \circ \Res_\pi \rho_1 \circ \epsilon = j \circ \theta_{A[m]}. \]
Indeed, via the identification $\Res_{\pi} (A\times_S \check{A}_1)= \bbV(\cT_{A/S} \otimes \alpha^\ast \omega_{\check{A}})$, the map $\epsilon$ is the zero section of the vector bundle $\cT_{A/S} \otimes \alpha^\ast \omega_{\check{A}}$, thus the second equality is the very definition of the universal vector hull. It remains to give an expression for the morphism
\[ j \colon \Res_{\pi} \GL((e, \iota)^\ast \cL) \too \GL(\pi_\ast (e, \iota)^\ast \cL).\]
The canonical rigidification of the Poincar\'e bundle furnishes an isomorphism of line bundles $\cO_{\check{A}} \cong (e, \id_{\check{A}})^\ast \cL$ by mean of which the $\cO_S$-module $\pi_\ast (e, \iota)^\ast \cL$ is identified with the $\cO_S$-module $\pi_{\ast} \cO_{\check{A}_1}$. Then, for an $S$-scheme $f \colon S' \to S$ and an $t \in A[m](S')$ the morphism $j$ sends $\theta_{A[m]}(t)$ to the automorphism of the short exact sequence
\[ 0 \too f^\ast \omega_{\check{A}} \too f^\ast \pi_{\ast} \cO_{\check{A}_1} \stackrel{q}{\too} \cO_{S'} \too 0 \]
given by $\id{} + q.\theta_{A[m]}(t)$. In particular, by definition of the $\theta_{\cU_A}$,
\[ \rho_{\cU_A}(t) = \id{} + q. \theta_{\cU_A(t)} = \id{} + q.\theta_{A[m]}(t),\]
whence the result.
\end{proof}

Let $F$ be a vector bundle on $S$, $\phi \colon \omega_{\check{A}} \to F$ a homomorphism of $\cO_A$-modules and $(\cF)$ the short exact sequence of $\cO_A$-modules obtained as the push-out of the extension $(\cU_A)$ along $\phi$. The vector bundle $\cF$ is unipotent and, since the integer $m$ vanishes on $S$, corresponds to a representation $\rho_{\cF, m}$. Arguing similarly to above, the morphism $\rho_{\cF, m} - \id$ factors through a morphism of group $S$-schemes
\[ \theta_\cF \colon A[m] \too \bbV(F).\]

\begin{corollary} \label{Cor:RepresentationAlgebraicUnipotentBundle} With the notation introduced above, 
\[ \theta_\cF = \phi \circ \theta_{A[m]}.\]
\end{corollary}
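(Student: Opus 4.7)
The plan is to deduce the corollary from Theorem \ref{Thm:RepresentationUniversalVectorExtension} by exploiting the functoriality of the representation $\rho_{-,m}$ under morphisms of unipotent vector bundles, applied to the canonical pushout map $(\cU_A) \to (\cF)$.

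First, I would note that by construction of $(\cF)$ as a pushout, there is a morphism of short exact sequences of $\cO_A$-modules
\[
\begin{tikzcd}
0 \ar[r] & \alpha^\ast \omega_{\check{A}} \ar[r] \ar[d, "\alpha^\ast \phi"] & \cU_A \ar[r] \ar[d, "\Phi"] & \cO_A \ar[r] \ar[d, equal] & 0 \\
0 \ar[r] & \alpha^\ast F \ar[r] & \cF \ar[r] & \cO_A \ar[r] & 0,
\end{tikzcd}
\]
in which all terms are unipotent (hence $m$-finite, by Lemma \ref{Lemma:TrivialPullbackUnipotentBundle}, since $m=0$ on $S$). Applying the functor $\rho_{-,m}$ and pulling back to the zero section (see Corollary \ref{Cor:UnipotentAsRepresentationTateGroupScheme}, item (1), for compatibility with finite limits/colimits), I obtain a commutative diagram of $A[m]$-equivariant short exact sequences of $\cO_S$-modules
\[
\begin{tikzcd}
0 \ar[r] & \omega_{\check{A}} \ar[r] \ar[d, "\phi"] & e^\ast \cU_A \ar[r] \ar[d, "e^\ast \Phi"] & \cO_S \ar[r] \ar[d, equal] & 0 \\
0 \ar[r] & F \ar[r] & e^\ast \cF \ar[r] & \cO_S \ar[r] & 0,
\end{tikzcd}
\]
where $A[m]$ acts trivially on $\omega_{\check{A}}$, $F$ and $\cO_S$ (those being constant vector bundles, they carry the trivial $A[m]$-linearization).

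Next, I would unwind the definitions of $\theta_{\cU_A}$ and $\theta_\cF$ in this picture. For an $S'$-valued point $t$ of $A[m]$, the automorphism $\rho_{\cU_A,m}(t)$ of $e^\ast \cU_A \otimes \cO_{S'}$ induces the identity on sub and quotient, so it equals $\id + j \circ \theta_{\cU_A}(t) \circ q$, where $q$ denotes the projection onto $\cO_{S'}$ and $j$ the inclusion of $\omega_{\check{A}} \otimes \cO_{S'}$; the analogous description holds for $\rho_{\cF,m}(t)$. The $A[m]$-equivariance of $e^\ast \Phi$ then forces the diagram
\[
\begin{tikzcd}
A[m] \ar[r, "\theta_{\cU_A}"] \ar[dr, "\theta_\cF"'] & \bbV(\omega_{\check{A}}) \ar[d, "\phi"] \\
& \bbV(F)
\end{tikzcd}
\]
to commute, i.e.\ $\theta_\cF = \phi \circ \theta_{\cU_A}$.

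Finally, invoking Theorem \ref{Thm:RepresentationUniversalVectorExtension}, which gives $\theta_{\cU_A} = \theta_{A[m]}$, yields the desired equality $\theta_\cF = \phi \circ \theta_{A[m]}$. The whole argument is essentially formal once functoriality of $\rho_{-,m}$ is in hand; the only minor point requiring care is checking that the bookkeeping of the $A[m]$-action on constant vector bundles is the trivial one, which follows from Corollary \ref{Cor:UnipotentAsRepresentationTateGroupScheme} (3) applied to the constant subobject and quotient.
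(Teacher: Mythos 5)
Your argument is correct and is precisely the one the paper intends: its proof of this corollary is the single sentence that the claim ``follows immediately'' from Theorem \ref{Thm:RepresentationUniversalVectorExtension} because $(\cF)$ is the push-out of $(\cU_A)$ along $\phi$, and your write-up simply makes explicit the functoriality of $\rho_{-,m}$ and the equivariance bookkeeping that this ``immediately'' suppresses. No gaps.
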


\begin{proof} Since the extension $(V)$ is obtained as the push-out of $(\cU_A)$ along $\phi$, this follows immediately from \Cref{Thm:RepresentationUniversalVectorExtension}.
\end{proof}

\subsection{Universal vector hull of the Tate group scheme} \label{sec:UniversalVectorHullTateGroupScheme} Suppose there is an integer $m \ge 1$ vanishing on $S$.

\begin{definition} The \emph{universal vector hull} of the Tate group scheme $\rT A$ is the morphism of group $S$-schemes 
\[ \theta_{\rT A} = \theta_{A[n]} \circ \pr_n \colon \rT A \too A[n] \too \bbV(\omega_{\check{A}}).\]
\end{definition}

It does not depend on the integer $m$ vanishing on $S$ as $\theta_{A[mn]} = \theta_{A[m]} \circ [n]$ for any integer $n \ge 1$. The kernel of $\theta_{\rT A}$ contains $T_\ell A$ for all primes $\ell$ invertible on $S$.

\begin{proposition} \label{Prop:UniversalPropertyUniversalVectorHullTateModule} Suppose $S$ quasi-compact and that there is an integer $m \ge 1$ vanishing on $S$. Let $E$ be a vector bundle on $S$ and $f \colon \rT A \to \bbV(E)$ a morphism of group $S$-schemes.  Then $f = \phi \circ \theta_{\rT A}$ for a unique homomorphism $\phi \colon \omega_{\check{A}} \to E$.
\end{proposition}

\begin{proof} This is the combination of  \Cref{Prop:MorphismToFiniteTypeFactorsAtFiniteLevel} \Cref{Prop:UniversalPropertyUniversalVectorHull}.
\end{proof}

Multiplying by $m$ on $\omega_{\check{A}}$ is the zero map, thus there is a unique map $\sigma_m \colon A \to A^\natural$ such that the following diagram is commutative:
\[ 
\begin{tikzcd}
0 \ar[r] & A[m] \ar[d, "\sigma_{m \rvert A[m]}"] \ar[r] & A \ar[d, "\sigma_m"] \ar[r, "{[m]}"] & A \ar[r] \ar[d, equal] & 0 \\
0 \ar[r] & \bbV(\omega_{\check{A}}) \ar[r] & A^\natural \ar[r] & A \ar[r] & 0
\end{tikzcd}
\]


The following fact is proved in \cite[2.6.2]{MazurMessing} via the theory of universal vector extension of finite flat group schemes. An alternative argument goes as follows.

\begin{proposition} \label{Prop:SplittingUniversalExtensionModM} With the notation above,
\[\sigma_{m \rvert A[m]} = - \theta_{A[m]}.\]
\end{proposition}

\begin{proof} Let $ \rho:= \rho_{\cU_A} \colon A[m] \to \GL(e^\ast \cU_A)$ the representation associated with the unipotent bundle $\cU_A$ by \Cref{Cor:UnipotentAsRepresentationTateGroupScheme}.
The base change along the multiplication-by-$m$ of the universal vector extension $A^\natural=\bbP(\cU_{A}) \smallsetminus \bbP(\alpha^\ast \omega_{\check{A}})$ is
\[ \bbP([m]^\ast \cU_A) \smallsetminus \bbP([m]^\ast \alpha^\ast \omega_{\check{A}}) \cong  \bbA(e^\ast \cU_A) \times_S A, \]
where $\bbA(e^\ast \cU_A) = \bbP(e^\ast \cU_A) \smallsetminus \bbP(\omega_{\check{A}})$.  By construction of $\rho$  the universal vector extension $A^\natural$ is the quotient of $\bbA(e^\ast \cU_A) \times_S A$ under the action of $A[m]$ defined by
\[ t.([v], x) = ([\rho(t).v], x + t). \]
Note that $\bbP(\omega_{\check{A}}) \subseteq \bbP(e^\ast \cU_A)$ is stable under action of $A[m]$ hence so is its complement $\bbA(e^\ast \cU_A)$. The zero section of $A^\natural$ permits to identify $\bbA(e^\ast \cU_A)$ with $\bbV(\omega_{A})$. By means of this, \Cref{Thm:RepresentationUniversalVectorExtension} implies that the action of $A[m]$ is $(t, v) \mapsto v + \theta_{A[m]}(t)$.
Summing up $A^\natural = (\bbV(\omega_{\check{A}}) \times_S A) / A[m]$ where the action on the right-hand side is \[t.(v, x) = ( v + \theta_{A[m]}(t), x + t).\] 
Consider the map $\tau \colon A \to A^\natural = (\bbV(\omega_{\check{A}}) \times_S A) / A[m]$, $x \mapsto [0, x]$ where the brackets stay for the point in the quotient. The morphism obtain composing $\tau$ with $A^\natural \to A$ coincides with the quotient map $A \to A/A[m]$, that is, the multiplication by $m$. Therefore $\tau = \sigma_m$ by uniqueness of $\sigma_m$. Since $\tau(x) = [0, x] = [-\theta_{A[m]}(x), 0]$ when $x$ is $m$-torsion, this concludes the proof.
\end{proof}

\begin{corollary} \label{SectionUniversalCoverAbelianScheme} Let $\tilde{A}$ be the universal cover of $A$ and $\sigma_A := \sigma_m \circ  \pr_m \colon \tilde{A} \to A^\natural$. Then $\sigma_A$ does not depend on $m$ and the following diagram is exact and commutative:
\[
\begin{tikzcd}
0  \ar[r] & \rT A \ar[r]  \ar[d, "-\theta_{\rT A}"] & \tilde{A} \ar[r, "{\pr_1}"]  \ar[d, "\sigma_A"] & A \ar[r]  \ar[d, equal]& 0 \\
0 \ar[r] & \bbV(\omega_{\check{A}}) \ar[r] & A^\natural \ar[r] & A \ar[r] & 0
\end{tikzcd}
\]
\end{corollary}

\section{Algebraic functions on vector extensions} \label{sec:AlgebraicFunctionsVectorExtensions}

\subsection{Statement} \label{sec:AlgebraicFunctionsVectorExtensionsStatements} Let $S$ be a scheme, $\alpha \colon A \to S$ an abelian scheme, $\check{A}$ its dual and $\omega_{\check{A}}$ the dual of the Lie algebra of $\check{A}$. Consider  $F$ a vector bundle on $S$, $\phi \colon \omega_{\check{A}} \to F$ a homomorphism of $\cO_S$-modules, and $(\cF)$ the short exact sequence of $\cO_A$-modules obtained as the push-out along $\phi$ of the short exact sequence $(\cU_A)$:
\[
\begin{tikzcd}
0 \ar[r] & \alpha^\ast \omega_{\check{A}} \ar[r] \ar[d, "\alpha^\ast \phi"] & \cU_A \ar[r] \ar[d] & \cO_A \ar[r] \ar[d, equal] & 0 \\
0 \ar[r] & \alpha^\ast F \ar[r]  & \cF \ar[r, "s"]  & \cO_A \ar[r] & 0.
\end{tikzcd}
\]
Upon setting $C:= \coker \phi$, this yields to an isomorphism
\[ \alpha^\ast C= \alpha^\ast \coker \phi \cong \cF / \cU_A. \]
Let $\bbP(\cF)$ be the projective bundle of line subbundles of the vector bundle $\cF$. The projective bundle $\bbP(\alpha^\ast F)$ is a relative Cartier divisor in $\bbP(\cF)$. Its complement 
\[ \bbA(\cF) := \bbP(\cF) \smallsetminus \bbP(\alpha^\ast F)\] can be seen as the closed subscheme of the total space $\bbV(\cF)$ with equation $s = 1$.  Let $\pi \colon \bbP(\cF) \to A$ and $\eta \colon \bbA(\cF) \to S$ denote the structural morphisms. Assume that $C$  is a vector bundle on $S$ and consider the morphism of $S$-schemes
\[ \pr_{u} \colon \bbA(\cF) \too \bbV(\cF) \too \bbV(\cF / \cU_A) \cong \bbV(\alpha^\ast C) = \bbV(C) \times_S A \stackrel{\pr_1}{\too} \bbV(C). \]
Let $\bbA(\cF)_e$ be the fiber of $\bbA(\cF) \to A$ at $e$. By restriction $\pr_u$ induces a morphism of $S$-schemes $\bbA(\cF)_e \to \bbV(C)$ corresponding to an injective homomorphism of $\cO_S$-algebras $ \Sym C^\vee \to \eta_{e \ast } \cO_{\bbA(\cF)_e}$ where $\eta_e \colon \bbA(\cF)_e \to S$ is the structural morphism. In what follows $\Sym C^\vee$ is identified with its image in $\eta_{e \ast } \cO_{\bbA(\cF)_e}$.

Suppose some integer $m\ge1$ vanishes on $S$. The vector bundle $\cF$ on $A$ is an extension of constant bundles thus  by \Cref{Cor:UnipotentAsRepresentationTateGroupScheme} it corresponds to a representation $\rho_{\cF, m} \colon A[m] \to \GL(e^\ast \cF)$. Moreover \Cref{Cor:RepresentationAlgebraicUnipotentBundle} states that
the difference $\rho_{\cF, m} - \id$ factors through the morphism of group $S$-schemes 
\[ \theta_{\cF} := \phi \circ \theta_{A[m]}   \colon A[m]\too \bbV(F),\] 
where $\theta_{A[m]} \colon A[m] \to \bbV(\omega_{\check{A}})$ is the universal vector hull of $A[m]$. The group $S$-scheme $A[m]$ acts on $\bbP(e^\ast \cF)$ via $\rho_{\cF, m}$ and leaves $\bbP(F)$ stable. The action on \[\bbA(\cF)_e = \bbP(e^\ast \cF) \smallsetminus \bbP(F)\] is described as follows. For an $S$-scheme $f \colon S' \to S$ a point $x \in \bbA(\cF)_e(S')$ corresponds to a splitting of the short exact sequence $0 \to f^\ast F \to f^\ast e^\ast \cF \to \cO_{S'} \to 0$ and  $t \in A[m](S')$ acts as
\[ x \longmapsto x + \theta_\cF(t). \]
Let $(\eta_{e \ast} \cO_{\bbA(\cF)_e})^{A[m]}$ denote the $A[m]$-invariants of $\eta_{e \ast} \cO_{\bbA(\cF)_e}$ under this action.

\begin{theorem} \label{Thm:AlgebraicFunctionsOnUniversalExt} Suppose that $C$ is a vector bundle on $S$. Then,
\begin{enumerate}
\item if $V$ is a vector bundle on $A$ such that $\alpha_\ast V = 0$, then $\eta_\ast ( \pi^\ast V_{\rvert \bbA(\cF)}) = 0$;
\item the restriction map $\eta_\ast \cO_{\bbA(\cF)} \to \eta_{e \ast} \cO_{\bbA(\cF)_e}$ is injective and has image
\[
\begin{cases}
\Sym C^\vee & \text{if $S$ is flat over $\bbZ$}, \\
(\eta_{e \ast} \cO_{\bbA(\cF)_e})^{A[m]} & \text{if there is an integer $m \ge 1$ vanishing on $S$}.
\end{cases}
\]
\end{enumerate}
\end{theorem}

\Cref{Thm:AlgebraicFunctionsOnUniversalExt} (1) will be applied when $V $ is a homogeneous line bundle. Over a field such a line bundle admits a non-zero global section if and only if it is trivial. More precisely:

\begin{lemma} \label{Prop:NonTrivialHomogeneousLineBundlesHaveNoNonzeroSections} Let $L$ be a homogeneous line bundle on $A$ and $s \in \Gamma(A, L)$. Then $s$ is a trivialization if and only if there is a faithfully flat $S$-scheme $S'$ and $x \in A(S')$ such that $x^\ast s \in \Gamma(S', x^\ast L)$ is a trivialization.
\end{lemma}

\begin{proof} If $s$ is trivialization, then take $S' = S$ and $x = e$. Conversely $[-1]^\ast L \cong L^\vee$ thus $t:= [-1]^\ast s \in \Gamma(A, L^\vee)$. Moreover $f := t(s) \in \Gamma(A, \cO_A) = \Gamma(S, \cO_S)$. The hypothesis implies that $f$ is invertible when base-changing to $S'$. But $f$ must be invertible already on $S$ by faithful flatness of $S' \to S$, thus $s$ is a trivialization.
 \end{proof}

The rest of this chapter is devoted to the proof of \Cref{Thm:AlgebraicFunctionsOnUniversalExt}.

\subsection{A result on affine bundles} Reset the notation and let $f \colon X \to S$ be a quasi-compact, quasi-separated, faithfully flat morphism of schemes such that the natural homomorphism $\cO_S \to f_\ast \cO_X$ is an isomorphism. Suppose the following  commutative and exact diagram of short exact sequence of $\cO_X$-modules is given:
\[ 
\begin{tikzcd}
0 \ar[r] & f^\ast F \ar[d, "f^\ast \psi"] \ar[r] & \cF \ar[r] \ar[d, "\Psi"]  & \cO_X \ar[r] \ar[d, equal]  & 0 \\
0 \ar[r] & f^\ast F' \ar[r] & \cF' \ar[r]  & \cO_X \ar[r]  & 0
\end{tikzcd}
\]
where $F$ and $F'$ are vector bundles on $S$ and $\psi \colon F \to F'$ a homomorphism. The homomorphism $\Psi$ induces a morphism between affine bundles
\[ \pi \colon \bbA(\cF) := \bbP(\cF) \smallsetminus \bbP(f^\ast F) \too \bbA(\cF') := \bbP(\cF') \smallsetminus \bbP(f^\ast F').\]
Arguing similarly to \Cref{sec:AlgebraicFunctionsVectorExtensionsStatements} the cokernel of $\Psi$ is naturally identified with $f^\ast F''$ where $F'' = \coker \psi$. As soon as $F''$ is a vector bundle this permits to define a projection $p \colon \bbA(\cF') \to \bbV(F'')$. Let $g$ and $g'$ be  the structural morphisms of the $S$-schemes $\bbA(\cF)$ and $\bbA(\cF')$.
\begin{proposition} \label{ReductionToUnivExt} If the natural map $\cO_S \to g_\ast \cO_{\bbA(\cF)}$ is an isomorphism, then
\[ g'_\ast \cO_{\bbA(\cF')} = \Sym (F''^\vee). \]
\end{proposition}

Let $t$ be the section of $\cF^\vee$ defined by the projection $\cF \to \cO_X$. The homomorphism  $\cO_S \to g_\ast \cO_{\bbA(\cF)}$ being an isomorphism can be reformulated by saying that, for any integer $d \ge 0$, the multiplication by $t$ induces an isomorphism  \[f_\ast \Sym^d \cF^\vee \stackrel{\sim}{\too} f_\ast \Sym^{d + 1} \cF^\vee.\] \Cref{ReductionToUnivExt} follows by duality from the next lemma:

\begin{lemma} \label{Lemma:InductionSymmetricPowers}
Consider the commutative and exact diagram of $\cO_X$-modules
\[
\begin{tikzcd}
 & & 0 \ar[d] & 0 \ar[d]\\ 
& 0 \ar[r] \ar[d] & \cE'' \ar[r, "\sim"] \ar[d] & f^\ast E'' \ar[r] \ar[d] & 0 \\
0 \ar[r] & \cO_X \ar[r, "s'"] \ar[d, equal] & \cE' \ar[r] \ar[d] & f^\ast E' \ar[r] \ar[d, "f^\ast \phi"] & 0 \\
0 \ar[r] & \cO_X \ar[r, "s"] & \cE \ar[r] & f^\ast E \ar[r] & 0 
\end{tikzcd}
\]
where $E$, $E'$ and $E''$ are vector bundles over $S$ and $\phi \colon E' \to E$ a homomorphism of $\cO_S$-modules. Assume, for each integer $d \ge 0$, that the multiplication by the section $s$ induces an isomorphism of $\cO_S$-modules
\[ f_\ast \Sym^d \cE \stackrel{\sim}{\too} f_\ast \Sym^{d + 1} \cE.\]
Then, the homomorphism of $\cO_X$-algebras $\Sym \cE'' \to \Sym \cE'$ induces an isomorphism of $\cO_S$-algebras
\[ \Sym E'' \cong f_\ast \Sym \cE''   \stackrel{\sim}{\too} \injlim_{d \in \bbN} f_\ast \Sym^d \cE',\]
the transition maps in the direct limit being the multiplication by the section $s'$.
\end{lemma}


 \begin{proof}[{Proof of \Cref{Lemma:InductionSymmetricPowers}}] For an integer $d \ge 0$, consider the following commutative and exact diagram of $\cO_X$-modules
\[
\begin{tikzcd}
0 \ar[r]  & \cK_d \ar[r] \ar[d] & \Sym^d \cE' \ar[r] \ar[d] & \Sym^d \cE \ar[r] \ar[d] & 0  \\
0 \ar[r] & f^\ast K_d \ar[r] &   \Sym^d f^\ast E' \ar[r] &   \Sym^d f^\ast E \ar[r] & 0.
\end{tikzcd}
\]
By design the vector bundles $\cK_0$ and $K_0$ vanish.

\begin{claim} For an integer $d \ge0$, 
\[ f_\ast \Sym^d \cE' = s'^d \cO_S \oplus f_\ast \cK_d, \qquad f_\ast \Sym^d \cE = s^d \cO_S.\]
\end{claim}

\begin{proof}[Proof of the Claim] The proof goes by induction on $d$. For $d = 0$ the statement is clear. Suppose $d \ge 1$ and the statement true for $d - 1$. By hypothesis, multiplication by the section $s$ induces an isomorphism
\[ s \colon f_\ast \Sym^{d-1} \cE \stackrel{\sim}{\too} f_\ast \Sym^{d} \cE,\]
yielding the equality $f_\ast \Sym^{d} \cE = s^{d}\cO_S$. In order to prove the remaining identity, consider the following exact sequence of $\cO_S$-modules
\[ 0 \too f_\ast \cK_{d} \too f_\ast \Sym^{d} \cE' \stackrel{p}{\too} f_\ast \Sym^{d} \cE = s^{d}\cO_S. \]
The composite map
\[
\begin{tikzcd}
 f_\ast \Sym^{d} \cE \ar[r, "(s^{d})^{-1}"] & \cO_S \ar[r,"s'^{d}"] & f_\ast \Sym^{d} \cE' 
\end{tikzcd}
\]
defines a section of the homomorphism $p$. The Claim follows.
\end{proof}
The vector bundles $\cK_d$ and $K_d$ admit filtrations
\begin{align*}
\Fil^i \cK_d &: && 0 &&=& \Fil^0 \cK_d &&\subset&& \Fil^1 \cK_d&& \subset&& \cdots &&\subset&& \Fil^d \cK_d &&=&& \cK_d, \\
\Fil^i K_d &: && 0 &&=& \Fil^0 K_d &&\subset&& \Fil^1 K_d &&\subset &&\cdots &&\subset&& \Fil^d K_d &&=&& K_d,
\end{align*}
whose $i$-th graded pieces are
\begin{align*}
\Fil^{i+1} \cK_d / \Fil^i \cK_d = \Sym^i  \cE \otimes \Sym^{d - i} \cE'', \\
\Fil^{i+1} K_d / \Fil^i K_d = \Sym^i  E \otimes \Sym^{d - i} E''.
\end{align*}

\begin{claim} The image of the homomorphism $f_\ast \cK_d \to K_d$ is $\Fil^1 K_d = \Sym^d E''$.
\end{claim}

\begin{proof}[Proof of the Claim] The homomorphism $\Sym^d \cE' \to \Sym^d f^\ast E'$ preserves the preceding  filtrations on $\cK_d$ and $K_d$, meaning that, for $i = 0, \dots, d$, the following diagram of $\cO_X$-modules is commutative and exact:
\[
\begin{tikzcd}[column sep=20pt]
0 \ar[r] & \Fil^i \cK_d \ar[r] \ar[d] & \Fil^{i+1} \cK_d \ar[r] \ar[d] &\Sym^i  \cE \otimes \Sym^{d - i} \cE'' \ar[r] \ar[d, "\alpha"]  &  0 \\
0 \ar[r] & f^\ast \Fil^i K_d \ar[r] & f^\ast \Fil^{i+1} K_d \ar[r] & \Sym^i f^\ast E \otimes  \Sym^{d - i} f^\ast E''  \ar[r] &  0,
\end{tikzcd}
\]
where $\alpha$ is induced by $\cE \to f^\ast E$ and $\cE'' \to f^\ast E''$ via tensor constructions.  By the projection formula, pushing-forward the homomorphism $\alpha$ along $f$ defines a homomorphism of $\cO_S$-modules
\[ f_\ast \alpha \colon (f_\ast \Sym^i  \cE) \otimes \Sym^{d - i} E'' \too \Sym^i E \otimes  \Sym^{d - i} E'', \]
where the isomorphism $\Sym^{d-i} \cE'' \cong f^\ast \Sym^{d-i} E''$ has been taking into account. Note that the map $f_\ast \alpha$ is obtained by taking the tensor product with $\Sym^{d - i} E''$ of the homomorphism $\beta \colon f_\ast \Sym^i  \cE \to  \Sym^i E$ induced by $\cE \to f^\ast E$. Now, for an integer $i \ge 1$, pushing forward along $f$ the short exact sequence of $\cO_X$-modules 
\[ 0 \too \Sym^{i-1} \cE \stackrel{s}{\too} \Sym^i \cE \too f^\ast \Sym^i E \too 0,\]
yields the following exact sequence of $\cO_S$-modules:
\[ 0 \too f_\ast \Sym^{i-1} \cE \stackrel{s}{\too} f_\ast \Sym^i \cE \stackrel{\beta}{\too} \Sym^i E. \]
By assumption, multiplication by $s$ is an isomorphism, thus $\beta = 0$. In particular, the homomorphism $f_\ast \alpha$ vanishes for $i \ge 1$ and the Claim follows.
\end{proof}
For $d \ge 0$ we have $\ker(f_\ast \cK_{d + 1} \to K_{d + 1})  = f_\ast \cK_d$ because the square
\[ 
\begin{tikzcd}
\Sym^d \cE' \ar[d, "s'"] \ar[r] & \Sym^d E' \ar[d, "0"] \\ 
\Sym^{d+1} \cE' \ar[r] & \Sym^{d+1} E'
\end{tikzcd}
\]
is commutative. Therefore, the second Claim implies that the sequence
\[ 0 \too f_\ast \cK_d \stackrel{s'}{\too} f_\ast \cK_{d+1} \too \Sym^{d+1} E'' \too 0 \]
is short exact. Coupling this with the first Claim yields
\[ \injlim_{d \in \bbN} f_\ast \Sym^d \cE' = \injlim_{d \in \bbN} (s'^d \cO_S \oplus f_\ast \cK_d) \cong \cO_S \oplus \bigoplus_{d \ge 1} \Sym^d E'' = \Sym E'',
\]
where the transition maps are the multiplication by $s'$.
 \end{proof}

\subsection{Proof of \Cref{Thm:AlgebraicFunctionsOnUniversalExt}} Go back to the notation in \Cref{sec:AlgebraicFunctionsVectorExtensionsStatements}. Let $j$ denote the open immersion of $\bbA(\cF)$ in $\bbP(\cF)$. For $d \in \bbZ$, consider the line bundle $\cO_\cF(d)$ on $\bbP(\cF)$. The projection $s \colon \cF \to \cO_A$ in the datum of $(\cF)$ as an extension can be seen as a global section of $\cF^\vee$. The latter being identified with $\pi_\ast \cO_{\cF}(1)$, the section $s$ defines also a global section on the line bundle $\cO_{\cF}(1)$.  The hyperplane bundle $\bbP(\alpha^\ast F)$ in $\bbP(\cF)$ is the zero locus of the section $s$. Therefore, for a vector bundle $E$ on $\bbP(\cF)$, 
\[ j_\ast j^\ast E = \injlim_{d \in \bbN} E(d),\]
where $E(d) := E \otimes \cO_\cF(d)$ for $d \in \bbZ$ and the transition maps are the multiplication by $s$. For a vector bundle $V$ on $A$ and $d \in \bbN$, the projection formula reads
\[ \pi_\ast (\pi^\ast V(d)) = V \otimes \pi_\ast \cO_{\cF}(d) = V \otimes \Sym^d \cF^\vee\]
because of the equality $\pi_\ast \cO_{\cF}(d) = \Sym^d \cF^\vee$. Thus, 
\[ \pi_\ast j_\ast j^\ast \pi^\ast V = \pi_\ast (\injlim_{d \in \bbN} \pi^\ast V(d)) = \injlim_{d \in \bbN} (V \otimes \pi_\ast \cO_\cF(d)). \]
Note that pushing-forward along $\pi$ and the direct limit commute because of the quasi-compactness and quasi-separation of $\pi$ (see \cite[\href{https://stacks.math.columbia.edu/tag/009F}{Lemma 009F}]{stacks-project}). For the same reason, pushing-forward along $\alpha$ finally gives
\begin{equation} \label{Eq:GlobalSectionsAffineBundleAlgebraic} \eta_\ast j^\ast \pi^\ast V = \injlim_{d \in \bbN} \alpha_\ast (V \otimes \Sym^d \cF^\vee).
\end{equation}

\begin{proof}[Proof of (1)] Let $i \colon \bbP(\alpha^\ast F) \to \bbP(\cF)$ the closed immersion. For $d \in \bbN$ consider the short exact sequence of $\cO_\bbP(\cF)$-modules
\[ 0 \too \cO_\cF(d) \stackrel{s}{\too} \cO_{\cF}(d+1) \too i_\ast i^\ast \cO_{\cF}(d+1) \too 0.\]
With the conventions above, its tensor product with the vector bundle $\pi^\ast V$ reads
\[ 0 \too \pi^\ast V (d) \stackrel{s}{\too} \pi^\ast V (d+1) \too i_\ast i^\ast  \pi^\ast V (d + 1) \too 0.\]
The push-forward of the previous short exact sequence along the morphism $\pi$ gives the following exact sequence of $\cO_{A}$-modules:
\[ 0 \too V \otimes \Sym^d \cF^\vee \stackrel{s}{\too} V \otimes \Sym^{d+1} \cF^\vee \too V \otimes \alpha^\ast \Sym^{d+1} F^\vee,\]
because of the projection formula and the equality, for an integer $d \in \bbN$, 
\[ \pi_\ast i_\ast i^\ast \cO_\cF(d+1) = \alpha^\ast \Sym^{d+1} F^\vee. \]
Consider the exact sequence of $\cO_S$-modules
\[ 0 \too \alpha_\ast (V \otimes \Sym^d \cF^\vee) \stackrel{s}{\too} \alpha_\ast (V \otimes \Sym^{d+1} \cF^\vee) \too \alpha_\ast V \otimes \Sym^{d+1} F^\vee.\]
obtained by pushing forward the previous one along the morphism $\alpha$. By hypothesis, the $\cO_S$-module $\alpha_\ast V$ vanishes, thus multiplication by $s$ induces an isomorphism
\[ \alpha_\ast (V \otimes \Sym^d \cF^\vee) \stackrel{\sim}{\too} \alpha_\ast (V \otimes \Sym^{d+1} \cF^\vee). \]
In particular, according to \eqref{Eq:GlobalSectionsAffineBundleAlgebraic},
\[ \eta_\ast  j^\ast \pi^\ast V = \injlim_{d \in \bbN} \alpha_\ast (V \otimes \Sym^d \cF^\vee) = \alpha_\ast V = 0, \]
which concludes the proof.
\end{proof}
Let $I$ be the sheaf of ideals of $\cO_A$ vanishing on the section $e$. Consider the short exact sequence of $\cO_A$-modules
\[ 0 \too I \otimes \Sym^d \cF^\vee \too \Sym^d \cF^\vee \stackrel{\rho_d}{\too} e_\ast e^\ast \Sym^d \cF^\vee \too 0.\]
The $\cO_S$-module $\alpha_\ast I$ vanishes, thus \Cref{Prop:GlobalSectionUnipotentBundleTwistedByAnIdeal} implies $\alpha_\ast (I \otimes \Sym^d \cF^\vee) = 0$
because the vector bundle $\Sym^d \cF^\vee$ is unipotent. In particular, pushing forward the previous short exact sequence along the morphism $\alpha$ yields an injective homomorphism of $\cO_S$-modules
\[ r_d \colon \alpha_\ast \Sym^d \cF^\vee \too e^\ast \Sym^d \cF^\vee. \]
Passing to the limit and applying \eqref{Eq:GlobalSectionsAffineBundleAlgebraic} with $V = \cO_A$ shows that the restriction map
\[ r \colon \eta_\ast \cO_{\bbA(\cF)} = \injlim_{d \in \bbN} \alpha_\ast \Sym^d \cF^\vee \too \injlim_{d \in \bbN} e^\ast \Sym^d \cF^\vee = \eta_{e \ast} \cO_{\bbA(\cF)_e}
\]
is injective.

\begin{proof}[Proof of (2) when $S$ is flat over $\bbZ$] Suppose first that the vector bundle $F$ coincides with $\omega_A$ and the homomorphism $\phi$ is the identity, so that $\cF = \cU_A$ and  $\bbA(\cF) = A^\natural$ is the universal vector extension.  Then, the statement is \cite[Corollary 2.4]{ColemanFourier}. In the locally Noetherian and characteristic $0$ case (to which one may reduce to for our ultimate goal), this can be found also in \cite[Th\'eor\`eme 2.4.1]{LaumonFourierGen}. In both cases, this is where the hypothesis of flatness over $\bbZ$ is used. The general case the follows by \Cref{ReductionToUnivExt}.
 \end{proof}

\begin{proof}[Proof of (2) when an integer $m \ge 1$ vanishes on $S$]  The vector bundle $\cF$ on $A$ is extension of constant vector bundles, thus it is $m$-finite by \Cref{Lemma:SplittingExtensionVectorBundles}. It follows that, for an integer $d \ge 0$, the vector bundle $\Sym^d \cF^\vee$ is $m$-finite. According to \Cref{Prop:RepresentationEssentiallyTrivialVectorBundle}, evaluation on $e$ yields an isomorphism of $\cO_S$-modules
\[ \alpha_\ast \Sym^d \cF^\vee \stackrel{\sim}{\too} (\Sym^d e^\ast \cF^\vee)^{A[m]}.\]
The section $e^\ast s$ of the vector bundle $e^\ast \cF^\vee$ is invariant under the action of $A[m]$, for it is the restriction of the global section $s$ of $\cF^\vee$. Therefore, multiplication by $e^\ast s$ is an $A[m]$-equivariant homomorphism. Passing to the limit and applying \eqref{Eq:GlobalSectionsAffineBundleAlgebraic} with $V = \cO_A$ shows that the restriction homomorphism defines an isomorphism
\[ \eta_\ast \cO_{\bbA(\cF)} = \injlim_{d \in \bbN} \alpha_\ast \Sym^d \cF^\vee \stackrel{\sim}{\too} (\injlim_{d \in \bbN}  \Sym^d e^\ast \cF^\vee)^{A[m]} = (\eta_{e \ast} \cO_{\bbA(\cF)_e})^{A[m]},\]
whence the statement.
\end{proof}

\section{Almost finite vector bundles and the Hodge-Tate decomposition} \label{Sec:AlmostFiniteAndHodgeTateDecomposition}

Let $K$ be a non-trivially valued complete non-Archimedean field, $R$ the ring of integers of $K$, $\varpi \in R$ a non-zero topologically nilpotent element and, for a formal $R$-scheme $X$ and an integer $n \ge 1$, $X_n$ the closed subscheme defined by the equation $\varpi^n = 0$.  Let $S$ be a formal $R$-scheme, $\alpha \colon A \to S$ a formal abelian scheme and $e \colon S \to A$ the zero section. The dual formal abelian scheme $\check{A}$ is the formal scheme \[\check{A} = \injlim_{n \ge 1} \check{A}_n\] where, for an integer $n \ge 1$, $\check{A}_n$ is the dual abelian scheme of $A_n$. 

\subsection{The Tate formal group scheme}  \label{Sec:FormalProjectiveLimits} Let $(X^{(i)}, f^{(i)} \colon X^{(i+1)} \to X^{(i)})_{i \in \bbN}$ be a projective system of formal $S$-schemes with affine transition maps. For an integer $n \ge 1$, let $(X^{(i)}_n, f^{(i)}_n)_{i \in \bbN}$ be the projective system of $S_n$-schemes obtained by base change to $S_n$. According to \cite[\href{https://stacks.math.columbia.edu/tag/01YX}{Lemma 01YX}]{stacks-project} and owing to the affineness of the transition maps, the projective limit \[X_n := \projlim_{i \in \bbN} X^{(i)}_n\]
exists and represents the functor associating to a $S_n$-scheme $T$ the projective limit of the sets $X^{(i)}_n(T)$. By \Cref{Prop:BaseChangeProjLimit}, the formation of the projective limit is compatible to base change, thus, for an integer $0 \le m \le n$, 
\[ X_m = X_n \times_{S_n} S_m.\]
The very definition of a morphism between formal schemes and the universal property of projective limits of schemes yield:

\begin{proposition} The functor $S' \mapsto \projlim_{i \in \bbN} X^{(i)}(S')$ on formal $S$-schemes $S'$
is representable by the formal $S$-scheme $\injlim_{n \in \bbN} X_n$, called the \emph{projective limit} of the formal schemes $X^{(i)}$.
\end{proposition}

The definition of universal cover, Tate group scheme and, for a prime $p$, the $p$-adic universal cover and $p$-adic Tate scheme is the analogous of Definition \Cref{Def:pAdicGroupScheme}. 

\subsection{Hodge-Tate decomposition} \label{Sec:HodgeTateDecomposition} Suppose that $K$ is a valued extension of $\bbQ_p$ for some prime $p$ and take $\varpi = p$. 

\begin{definition}
For each integer $n \ge 1$ let $\theta_{\rT A_n} \colon \rT A_n \to \bbV(\omega_{\check{A}_n})$ be the universal vector hull of the Tate group scheme of $A_n$. The morphism of group formal $S$-schemes $\theta_{\rT A} \colon \rT A \to \bbV(\omega_{\check{A}})$ defined by the morphisms $\theta_{\rT A_n}$ factors through a morphism of group formal $S$-schemes
\[ \theta_{\rT_p A} \colon \rT_p A \too \bbV(\omega_{\check{A}}) \]
called the \emph{universal vector hull} of the $p$-adic Tate group scheme.
\end{definition}


\begin{remark} \label{Rmk:NonReducedStructureTateGroupScheme} Suppose $K$ algebraically closed and $S= \Spf(R)$. Then, for any free $R$-module $E$ of finite rank $r$,
\[ \Hom_{R\textup{-groups}}(\rT_p A, \bbV(E)) \neq \Hom_{\bbZ_p\textup{-mod}}(\rT_p A(R), E) \]
simply because these two $\bbZ_p$-modules have different ranks, respectively $gr$ and $2gr$. Indeed the left-hand side coincides with $\Hom_{R\textup{-mod}}(\omega_{\check{A}}, E)$ by the universal property of the universal vector hull (the analogous of \Cref{Prop:UniversalPropertyUniversalVectorHullTateModule}), while on the right-hand side $\rT_p A(R)$ is the usual $p$-adic Tate module.
\end{remark}

Let $\tilde{A}$ be the $p$-adic universal cover of the formal abelian scheme $A$. For $n \ge 1$ the morphism $\sigma_{A_n}$ introduced in \Cref{sec:UniversalVectorHullTateGroupScheme} factors through a morphism of $S_n$-schemes $\sigma_{A_n, p} \colon \tilde{A}_n \to A^\natural_n$ because each prime different from $p$ is invertible of $S_n$. By \Cref{SectionUniversalCoverAbelianScheme} the morphism of formal $S$-schemes $\sigma_{A, p} \colon \tilde{A} \to A^\natural$ defined in this way fits in the following commutative and exact diagram of group formal $S$-schemes
\[ 
\begin{tikzcd}
0 \ar[r] & \rT_p A \ar[r] \ar[d, "-\theta_{\rT_p A}"] & \tilde{A} \ar[r, "\pr_1"] \ar[d, "\sigma_{A, p}"] & A \ar[r] \ar[d, equal] & 0 \\ 
0 \ar[r]  & \bbV(\omega_{\check{A}}) \ar[r]  & A^\natural \ar[r]  & A \ar[r]  & 0
\end{tikzcd}
\]

\begin{theorem} \label{Thm:HodgeTateDecompositionGoodReduction} Suppose $S = \Spf(R)$ and $K$ algebraically closed. Then, the following $K$-linear map is surjective:
\[ \theta_{\rT_p A}\colon \rT_pA (R) \otimes_{\bbZ_p} K \too \omega_{\check{A}} \otimes_R K\]
\end{theorem}

\begin{proof} The map $\alpha_G$ for the $p$-divisible group $G = A[p^\infty]$ in \cite[Definition 3.2.3]{ScholzePDivisibleGroups} agrees with $-\theta_{\rT_p A}$ by \Cref{Prop:LimitFormulaUniversalVectorHull} below and \emph{op.cit.} Lemma 3.5.1. The statement then is Proposition 5.1.6 in \emph{op.cit.}.
\end{proof}

\begin{lemma} \label{Prop:LimitFormulaUniversalVectorHull} For $x = (x_i)_{i \in \bbN} \in \tilde{A}(S)$ and $x_i^\natural \in A^\natural(S)$ mapping to $x_i \in A(S)$,
\[ \sigma_{A, p}(x) = \lim_{i \to \infty} p^i x^\natural_i.\]
\end{lemma}

\begin{proof} For $n \ge 1$ the base change to $S_n$ of the morphism $\sigma_{A, p}$ can be written for any integer $i \ge n$ as $\sigma_{n, i} \circ \pr_{n, i}$ where $\pr_{n, i} \colon \tilde{A}_{n} \to A_n$ is the projection onto the $i$-th factor and $\sigma_{n, i} \colon A_n \to A^\natural_n$ is the unique for which its composition with $A^\natural_n \to A_n$ is $[p^i] \colon A_n \to A_n$. In particular, the chain of congruences
\[ \sigma_{A, p}(x) \equiv \sigma_{n, i}(x_i) \equiv p^i x_i^\natural \pmod{p^n} \]
holds. The statement follows by letting $n \to \infty$.
\end{proof}

\subsection{Almost finite vector bundles as representations} The notions of $m$-finite, constant and unipotent vector bundle on $A$, and that of a (unipotent) representation is the obvious analogue of the scheme-theoretic one.

\begin{definition} A vector bundle $V$ on $A$ is \emph{almost finite} if $V_{\rvert A_n}$ is finite for each~$n$.
\end{definition}

When $K$ is the completion of an algebraic closure of $\bbQ_p$ for some prime $p$ and $S = \Spf(R)$ a line bundle  on $A$ is almost finite if and only if it is homogeneous. For an almost finite bundle $V$ on $A$, the representations $\rho_{V, n} \colon \rT A_n \to \GL(e^\ast V)_n$, for each integer $n \ge 1$, define a representation 
\[ \rho_V \colon \rT A \too \GL(e^\ast V).\]

\begin{proposition}\label{Prop:RepresentationUnipotentFormalVectorBundle}  Suppose $S$ quasi-compact. Then, the functor
\[
\{ \textup{almost finite vector bundles on $A$}\} \too \{\textup{representations of $\rT A$} \}, \quad V \longmapsto \rho_V
\]
is an equivalence of categories, preserves direct sums, tensor products, internal homs, symmetric and exterior powers, and is compatible with arbitrary base change. Moreover, for an almost finite vector bundle $V$, the evaluation $\alpha_\ast V \to e^\ast V$ at the zero section induces an isomorphism
\[ \alpha_\ast V \cong (e^\ast V)^{\rT A}. \]
\end{proposition}

\begin{proof} Apply \Cref{Prop:RepresentationEssentiallyTrivialVectorBundle} to each successive thickening $A_n$.
\end{proof}

In the case of unipotent bundles this reads as:

\begin{corollary} Suppose $S$ quasi-compact and $K$ a valued extension of $\bbQ_p$. Then,
\[
\{ \textup{unipotent vector bundles on $A$} \} \to \{ \textup{unipotent representations of $\rT_p A$} \}, V \mapsto \rho_V
\]
is an equivalence of categories, preserves direct sums, tensor products, internal homs, symmetric, exterior powers, and is compatible with arbitrary base change. Moreover, for a unipotent vector bundle $V$ on $A$, the evaluation $\alpha_\ast V \to e^\ast V$ at the zero section   induces an isomorphism 
\[ \alpha_\ast V \cong (e^\ast V)^{\rT_p A}. \]
\end{corollary}

\subsection{Representation of the canonical extension} \label{Sec:RepresentationCanonicalExtension} Let $K$ be a valued extension of $\bbQ_p$, $F$ a vector bundle on $S$, $\phi \colon \omega_{\check{A}} \to F$ be a homomorphism of $\cO_A$-modules and $(\cF)$ the short exact sequence of $\cO_A$-modules obtained as the push-out of the extension $(\cU_A)$ along $\phi$. The vector bundle $\cF$ is unipotent and corresponds to a representation $\rho_{\cF}$ of the $p$-adic Tate group scheme $\rT_p A$. Moreover, for a formal $S$-scheme $f \colon S' \to S$ and an $S'$-valued point $t$ of $\rT_p A$, the automorphism $\rho_\cF(t)$ fits into the following commutative and exact diagram of $\cO_{S'}$-modules:
\[
\begin{tikzcd}
0 \ar[r] & f^\ast F \ar[r] \ar[d, equal] & f^\ast \cF \ar[r] \ar[d, "\rho_{\cF}(t)"] & \cO_{S'} \ar[r] \ar[d, equal] & 0\ \\
0 \ar[r] & f^\ast F \ar[r] & f^\ast \cF \ar[r] & \cO_{S'} \ar[r] & 0.
\end{tikzcd}
\]
Therefore, the morphism $\rho_{\cF} - \id$ factors through a morphism of group $S$-schemes
\[ \theta_\cF \colon \rT_p A \too \bbV(F).\]
\Cref{Cor:RepresentationAlgebraicUnipotentBundle} gives immediately:

\begin{corollary} \label{Cor:RepresentationFormalUnipotentBundle} With the notation introduced above, 
\[ \theta_\cF = \phi \circ \theta_{\rT_p A}.\]
\end{corollary}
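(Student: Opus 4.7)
The plan is to reduce the formal statement to its algebraic avatar, namely Corollary \ref{Cor:RepresentationAlgebraicUnipotentBundle}, by working at each successive thickening $A_n$ over $S_n$ and then passing to the limit. The key point is that every construction in sight---the canonical extension $(\cU_A)$, its push-out $(\cF)$, the representation $\rho_\cF$, the morphism $\theta_\cF$, the $p$-adic Tate group scheme $\rT_p A$, and its universal vector hull $\theta_{\rT_p A}$---is defined as the colimit of the corresponding objects attached to the schemes $A_n$.

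First I would fix an integer $n \ge 1$ and base-change to $S_n$. Writing $\phi_n \colon \omega_{\check{A}_n} \to F_n$ for the restriction of $\phi$, the restriction of $(\cF)$ to $A_n$ is the push-out of $(\cU_{A_n})$ along $\phi_n$. The integer $m := p^n$ vanishes on $S_n$, so Corollary \ref{Cor:RepresentationAlgebraicUnipotentBundle} applied to $A_n/S_n$ with this choice of $m$ yields
\[
\theta_{\cF_n} = \phi_n \circ \theta_{A_n[p^n]} \colon A_n[p^n] \too \bbV(F_n),
\]
where on the right $\omega_{\check{A}_n[p^n]}$ has been identified with $\omega_{\check{A}_n}$ via the vanishing of $p^n$ on $S_n$. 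Precomposing both sides with the canonical projection $\rT_p A_n \to A_n[p^n]$ and invoking the very definition
\[
\theta_{\rT_p A_n} = \theta_{A_n[p^n]} \circ \pr_{p^n}
\]
from paragraph \ref{sec:UniversalVectorHullTateGroupScheme} (and the compatibility noted there when replacing $p^n$ by a higher power of $p$), we obtain the equality
\[
\theta_{\cF_n} \circ \pr_{p^n} = \phi_n \circ \theta_{\rT_p A_n}
\]
of morphisms of group $S_n$-schemes $\rT_p A_n \to \bbV(F_n)$.

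Finally, I would let $n$ vary. By construction, the representation $\rho_\cF \colon \rT_p A \to \GL(e^\ast \cF)$ is the one determined at each level by $\rho_{\cF_n, p^n}$, so $\theta_\cF$ is the morphism of formal group $S$-schemes obtained as $\injlim_n \theta_{\cF_n} \circ \pr_{p^n}$; similarly $\theta_{\rT_p A} = \injlim_n \theta_{\rT_p A_n}$ and $\phi = \injlim_n \phi_n$. Passing to the colimit in the displayed equality above therefore yields $\theta_\cF = \phi \circ \theta_{\rT_p A}$, as required. There is no real obstacle here: the only thing to verify is that the various limits are taken consistently on both sides, and this is guaranteed by the compatibility $\theta_{A_n[p^{n+1}]} \circ [p] = \theta_{A_n[p^n]}$ used to define $\theta_{\rT_p A}$ in \ref{Sec:HodgeTateDecomposition}.
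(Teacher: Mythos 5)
Your proposal is correct and is exactly the paper's argument: the paper simply asserts that Corollary \ref{Cor:RepresentationAlgebraicUnipotentBundle} "gives immediately" the formal statement, the implicit content being precisely your reduction to each thickening $A_n$ (where $p^n$ vanishes) followed by passage to the limit using the compatibility built into the definition of $\theta_{\rT_p A}$. You have merely written out the details the author left to the reader.
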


\section{Formal functions on the universal vector extension}

\subsection{Statements} \label{sec:FormalFunctionsVectorExtensions} Let $K$ be a complete non-trivially valued non-Archimedean field and $R$ its ring of integers. Let $\alpha \colon A \to \Spf(R)$ be a formal abelian scheme, $F$ a free $R$-module of finite rank, $\phi \colon \omega_{\check{A}} \to F$ a homomorphism of $R$-modules, and $(\cF)$ the short exact sequence of $\cO_A$-modules obtained as the push-out of $(\cU_A)$ along $\phi$. Let $\bbP(\cF)$ be the projective bundle of line subbundles of the vector bundle $\cF$. The projective bundle $\bbP(\alpha^\ast F)$ is a relative Cartier divisor in $\bbP(\cF)$. Let 
\[\bbA(\cF) := \bbP(\cF) \smallsetminus \bbP(\alpha^\ast F)\] be its complement, suppose that cokernel $C$ of $\phi$ is a torsion-free (hence locally free of finite rank) and define a morphism of formal $R$-schemes 
\[ \pr_{u} \colon \bbA(\cF) \too \bbV(\cF) \too \bbV(\cF / \cU_A) \cong \bbV(\alpha^\ast C) = \bbV(C) \times_S A \stackrel{\pr_1}{\too} \bbV(C). \]
by arguing as in \Cref{sec:AlgebraicFunctionsVectorExtensions}. Let $\pi \colon \bbP(\cF) \to A$ denote the structural morphism.

\begin{theorem} \label{Thm:FormalFunctionsOnVectorExt} Suppose that the $R$-module $C$ is torsion-free. Then, with the notation introduced above,
\begin{enumerate}
  \item for a vector bundle $V$ on $A$ such that $\Gamma(A, V) = 0$, 
\[ \Gamma(\bbA(\cF), \pi^\ast V) = 0;\]
\item if $\characteristic K = 0$, then precomposing with $\pr_{u}$ induces an isomorphism
\[ \Gamma(\bbV(C), \cO_{\bbV(C)}) \stackrel{\sim}{\too} \Gamma(\bbA(\cF), \cO_{\bbA(\cF)}).\]
\end{enumerate}
\end{theorem}

Let $\Lambda$ be a finitely generated free abelian group and $T$ the formal split torus with group of characters $\Lambda$. Let $c \colon \Lambda \to \check{A}(R)$ be a group homomorphism and 
\[ 0 \too T \too G \too A \too 0 \]
the extension of the formal abelian scheme $A$ by the formal split torus $T$ determined by the homomorphism $c$. Namely, the formal scheme $G$ represents the functor associating to a formal $R$-scheme $S$ the datum of an $S$-valued point $x$ of $A$ and, for each $\chi \in \Lambda$, a trivialization $\langle x, \chi \rangle$ of the line bundle 
\[ \cL_{x, \chi} := (x, c(\chi))^\ast \cL\]
on $S$ where $\cL$ is the Poincar\'e bundle on $A \times_R \check{A}$. Moreover, the trivializations $\langle x, \chi \rangle$ undergo the compatibility
\[  \langle x, \chi \rangle \otimes \langle x, \chi \rangle = \langle x, \chi + \chi' \rangle,\]
meant to be understood via the isomorphism $\cL_{x, \chi} \otimes \cL_{x, \chi'} \cong \cL_{x, \chi + \chi'}$. Let $T_0$ be the formal split torus with group of characters $\ker c$. Consider the morphism \[\pr_t \colon G \too T_0\] defined as follows. For $\chi \in \ker c$, the line bundle $(\id, c(\chi))^\ast \cL$ on $A$ is by definition the trivial one. Therefore, for a formal $R$-scheme $S$ and an $S$-valued point $x$ of $G$, the trivialization $\langle x, \chi \rangle$ is an invertible function on $A \times_R S$. Because of the equality $\Gamma(A \times_R S, \cO_{A \times_R S}) = \Gamma(S, \cO_S)$ the homomorphism $\ker c \to \Gamma(S, \cO_S)^\times$, $\chi \mapsto \langle x, \chi \rangle$, defines an $S$-point $\pr_t(x)$.

\begin{corollary} \label{Cor:FormalFunctionsVectExtSemiAbelian} If $\characteristic(K) = 0$, then precomposing  with $(\pr_{t}, \pr_{u})$ induces an isomorphism of $R$-algebras
\[ \Gamma(T_0 \times_R \bbV(C), \cO_{T_0 \times \bbV(C)}) \stackrel{\sim}{\too} \Gamma(G \times_A \bbA(\cF), \cO_{G \times_A \bbA(\cF)}).\]
\end{corollary}

The proof of these two results will occupy the remained of this chapter.

\subsection{Notation} Fix notation as follows. Let $\varpi \in R$ be a non-zero topologically nilpotent element, $X$ a formal $R$-scheme and $W$ an $\cO_X$-module. For an integer $n \ge $1, consider the ring $R_n:= R / (\varpi^n)$, the closed subscheme $X_n$ of $X$ given by the equation $\varpi^n = 0$ and the $\cO_{X_n}$-module $W_{n} := W_{\rvert X_n}$. For a morphism $f \colon X \to Y$ of formal $R$-schemes, let $f_n \colon X_n \to Y_n$ denote its reduction modulo $\varpi^n$. Also, for a free $R$-module $E$ of finite rank, set
\[\hSym E := \projlim_{n \ge1} \Sym E / (\varpi^n).\]
With this notation,
\[ \Gamma(\bbV(E), \cO_{\bbV(E)}) = \hSym E^\vee. \]
For $d \in \bbN$ consider the line bundle $\cO_{\cF}(d)$ on the projective bundle $\bbP(\cF)$, so that
\[ \pi_\ast \cO_{\cF}(d) = \Sym^d \cF^\vee.\]
In particular, the projection $s \colon \cF \to \cO_A$ in the datum of the extension $(\cF)$ defines a global section of the line bundle $\cO_{\cF}(1)$. Upon writing $j$ for the open immersion of $\bbA(\cF)$ in $\bbP(\cF)$, for a vector bundle $L$ on $\bbP(\cF)$ the identity
\[ j_\ast j^\ast L = \projlim_{n \ge 1} j_{n \ast} j^\ast_n L_n = \projlim_{n \ge 1} \injlim_{d \in \bbN} L(d)_n\]
holds, where, for an integer $d\ge 0$, $L(d) := L \otimes \cO_\cF(d)$, and the transition maps in the direct limit are the multiplication by the section $s$. For a vector bundle $V$ on $A$ and $L = \pi^\ast V$, the projection formula gives
\begin{equation} \label{Eq:FormalSectionsVectorBundleVectorExtension} \pi_\ast j_\ast j^\ast \pi^\ast V = \projlim_{n \ge 1} \injlim_{d \in \bbN} \pi_{n \ast} \pi_n^\ast V(d)_n = \projlim_{n \ge 1} \injlim_{d \in \bbN} V_n \otimes \Sym^{d} \cF^\vee_n.
\end{equation}

\subsection{Proof of \Cref{Thm:FormalFunctionsOnVectorExt} (1)} It would tempting to apply  \Cref{Thm:AlgebraicFunctionsOnUniversalExt} (1) to the restriction $V_n$ of $V$ to the $n$-th thickening $A_n$. Alas, even though the vector bundle $V$ on $A$ has no non-zero sections, the vector bundle $V_n$ on $A_n$ might very well have, preventing the hypothesis of \emph{loc.cit.} to be fulfilled. Nonetheless, the Mittag-Leffler condition in this case says that, given an integer $n \ge 1$, there is $r \ge 0$ such that, for each integer $n' \ge n + r$, the image of restriction map
\[ \Gamma(A_{n'}, V_{n'}) \too \Gamma(A_n, V_n)\]
vanishes. The argument will ultimately rely on this fact. In order to get started with, for integers $n \ge 1$ and $d \ge 0$, set
\begin{align*}
\cW^d &:= V \otimes \Sym^d \cF^\vee, & W^d &:= V \otimes \alpha^\ast \Sym^d F^\vee.
\end{align*}
Given integers $n' \ge n$ and $d' \ge d$, consider the homomorphism of $R$-modules
\[ s^{d, d'}_{n, n'} \colon \Gamma(A_n, \cW^d) / \Gamma(A_{n'}, \cW^d) \too \Gamma(A_n, \cW^{d'}) / \Gamma(A_{n'}, \cW^{d'}).
 \]
induced by the multiplication by the section $s^{d' - d}$.

\begin{claim} The homomorphism $s^{d, d'}_{n, n'}$ is injective.
\end{claim}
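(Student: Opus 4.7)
The plan is to exploit the short exact sequence of $\cO_A$-modules
\[ 0 \longrightarrow \cW^d \stackrel{s^{d'-d}}{\longrightarrow} \cW^{d'} \longrightarrow P^{d,d'} \longrightarrow 0, \]
obtained by iterating the short exact sequence on $\bbP(\cF)$,
\[ 0 \to \cO_\cF(k) \stackrel{s}{\to} \cO_\cF(k+1) \to i_\ast \cO_{\bbP(\alpha^\ast F)}(k+1) \to 0, \]
then pushing it down to $A$ (using $R^1\pi_\ast \cO_\cF(k)=0$ for $k \ge 0$) and tensoring with the vector bundle $V$. By construction $P^{d,d'}$ carries a filtration whose graded pieces are $V \otimes \alpha^\ast \Sym^k F^\vee$ for $k = d+1, \ldots, d'$. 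Since $\cF$ and $V$ are $\cO_S$-flat, the sequence restricts to an SES on each $A_n$, compatibly with the restriction $A_n \hookrightarrow A_{n'}$.

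Given $\sigma \in \Gamma(A_n, \cW^d_n)$ whose class lies in $\ker(s^{d,d'}_{n,n'})$, choose $\tau \in \Gamma(A_{n'}, \cW^{d'}_{n'})$ with $\tau_{\rvert A_n} = s^{d'-d} \sigma$; my goal is to produce $\tilde\sigma \in \Gamma(A_{n'}, \cW^d_{n'})$ restricting to $\sigma$. The image $\bar\tau \in \Gamma(A_{n'}, P^{d,d'}_{n'})$ vanishes on $A_n$, because $s^{d'-d} \sigma$ does. Using the short exact sequence $0 \to \cO_{A_{n'-n}} \stackrel{\varpi^n}{\to} \cO_{A_{n'}} \to \cO_{A_n} \to 0$ (valid since $\varpi$ is a non-zero divisor in $R$) I would write $\bar\tau = \varpi^n \bar\mu$ for a unique $\bar\mu \in \Gamma(A_{n'-n}, P^{d,d'}_{n'-n})$. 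If $\bar\mu$ lifts to some $\mu \in \Gamma(A_{n'-n}, \cW^{d'}_{n'-n})$, the replacement $\tau \mapsto \tau - \varpi^n \mu$ preserves the equation $\tau_{\rvert A_n} = s^{d'-d} \sigma$ and kills the image in $\Gamma(A_{n'}, P^{d,d'}_{n'})$; the modified $\tau$ then belongs to $s^{d'-d}\Gamma(A_{n'}, \cW^d_{n'})$, and the sheaf-level injectivity of multiplication by $s^{d'-d}$ gives the desired $\tilde\sigma$ satisfying $\tilde\sigma_{\rvert A_n} = \sigma$.

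The essential step is therefore controlling the obstruction class $\delta(\bar\mu) \in H^1(A_{n'-n}, \cW^d_{n'-n})$. By a direct \v{C}ech-cocycle computation comparing the connecting maps arising from the $\varpi^n$-SES and the $s^{d'-d}$-SES, this class agrees up to sign with the obstruction $\delta'(\sigma)$ to lifting $\sigma$ from $A_n$ to $A_{n'}$. The snake lemma applied to the commutative diagram of long exact sequences obtained from $0 \to \cW^d \to \cW^{d'} \to P^{d,d'} \to 0$ on $A_n$ and on $A_{n'}$, together with the $\varpi^n$-filtration, identifies $\ker(s^{d,d'}_{n,n'})$ as a subquotient of $\ker\bigl(s^{d'-d}\colon H^1(A_{n'-n}, \cW^d_{n'-n}) \to H^1(A_{n'-n}, \cW^{d'}_{n'-n})\bigr)$.

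The main obstacle will be closing this loop without circularity. I expect the cleanest route is induction on $d'-d$, reducing to the case $d' = d+1$ where $P^{d,d+1} = V \otimes \alpha^\ast \Sym^{d+1} F^\vee$ is explicit: the projection formula gives $\alpha_\ast P^{d,d+1} = \alpha_\ast V \otimes \Sym^{d+1} F^\vee$, and the filtration on $\cF$ as an extension of constant bundles lets one describe the connecting homomorphism in terms of pullbacks of $R$-linear data on $S$. Arguing step by step along this filtration, together with injectivity of $s$ at the sheaf level on each $\Sym^d \cF^\vee$, should force the relevant subquotient of $H^1$ to vanish and thereby close the induction.
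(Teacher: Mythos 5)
Your opening reduction is sound, and it is essentially the paper's own argument unwound: the paper likewise reduces to $d'=d+1$ (a composite of injections is injective) and then runs a snake-lemma chase on the two-row diagram whose top row consists of the images of $\Gamma(A_{n'},\cW^d)$, $\Gamma(A_{n'},\cW^{d+1})$, $\Gamma(A_{n'},W^{d+1})$ inside the corresponding groups of sections over $A_n$, and whose bottom row is the left-exact sequence $0\to\Gamma(A_n,\cW^d)\to\Gamma(A_n,\cW^{d+1})\to\Gamma(A_n,W^{d+1})$; no cohomology in degree $\ge 1$ appears there. Up to the sentence beginning ``If $\bar\mu$ lifts,'' your computation and the paper's coincide.

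The genuine gap is that nothing after that sentence establishes the lift, and the route you sketch cannot close it. You note yourself that the obstruction $\delta(\bar\mu)\in\rH^1(A_{n'-n},\cW^d_{n'-n})$ agrees with the obstruction to lifting $\sigma$, which makes the argument circular; the proposed escape --- forcing the relevant subquotient of $\rH^1$ to vanish by induction on $d'-d$ and the filtration on $\cF$ --- is not available. For $d'=d+1$ the kernel of $s\colon\rH^1(A_m,\cW^d_m)\to\rH^1(A_m,\cW^{d+1}_m)$ is, by the long exact sequence, the image of the connecting map $\rH^0(A_m,V_m)\otimes\Sym^{d+1}F^\vee\to\rH^1(A_m,\cW^d_m)$, i.e.\ cup product against the extension class of $\cF$, which is nonzero by construction; and $\rH^0(A_m,V_m)$ need not vanish even though $\rH^0(A,V)=0$ --- that failure is precisely why the surrounding proof must invoke the Mittag--Leffler condition in the second Claim. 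So the subquotient you need does not vanish for any structural reason, and the induction never gets started. What is actually required --- and what the paper's chase encodes as the exactness of its top row at the middle term, namely that $g(\beta)$, a section of the subsheaf $\varpi^nW^{d+1}$, lifts to $\varpi^n\cW^{d+1}$ after correction by the image of $s$ --- is exactly the liftability of $\bar\mu$. Your write-up correctly isolates this as the crux but does not prove it, so the Claim remains unestablished in your proposal.
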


\begin{proof}[Proof of the Claim] Since the composite of injective maps is injective, it suffices to prove for $d' = d + 1$. Let $i \colon \bbP(\alpha^\ast F) \to \bbP(\cF)$ be the closed immersion and consider the short exact sequence of $\cO_{\bbP(\cF)}$-modules
\[ 0 \too \pi^\ast V(d) \stackrel{s}{\too} \pi^\ast V(d + 1) \too i_\ast i^\ast \pi^\ast V(d + 1) \too 0.\]
The short exact sequence of $\cO_A$-modules 
\[ 0 \too V\otimes \Sym^d \cF^\vee \stackrel{s}{\too} V \otimes \Sym^{d + 1} \cF^\vee \too V \otimes \Sym^{d + 1} \alpha^\ast F^\vee \too 0,\]
obtained by pushing forward the previous one along $\pi$, yields the following commutative and exact diagram of $R$-modules:
\[
\begin{tikzcd}
0 \ar[r] & \displaystyle \frac{\Gamma(A_{n'}, \cW^d)}{\Gamma(A_{n'}, \varpi^n \cW^d)} \ar[d, hook] \ar[r, "s"] & \displaystyle
\frac{\Gamma(A_{n'}, \cW^{d+1})}{\Gamma(A_{n'}, \varpi^n \cW^{d+1})} \ar[d, hook] \ar[r] & \displaystyle \frac{\Gamma(A_{n'}, W^{d+1})}{\Gamma(A_{n'}, \varpi^n W^{d+1})} \ar[d, hook] \\
0 \ar[r] & \Gamma(A_n, \cW^d) \ar[r, "s"]&  \Gamma(A_n, \cW^{d+1}) \ar[r]   & \Gamma(A_n, W^{d+1}).
\end{tikzcd} 
\]
Note that the vertical arrows are injective because, for an $R$-scheme $X$ and an $\cO_X$-module $E$, the sequence of $R$-modules
\[ 0 \too \Gamma(X, \varpi^n E) \too \Gamma(X, E) \too \Gamma(X, E/\varpi^n E),\]
deduced by taking global sections from the short exact sequence of $\cO_X$-modules
\[ 0 \too \varpi^n E \too E \too E / \varpi^n E \too 0,\]
is exact (by left-exactness of taking global sections).  Now, the homomorphism $s_{n, n'}^{d, d+1}$ is nothing but the one induced on the cokernels of the two leftmost vertical arrows of the above diagram. The Snake Lemma hence permits to conclude.
\end{proof}

For convenience set $\cW^{-1} = 0$. Let $f$ be a section of $\pi^\ast V$ on $\bbA(\cF)$ and $n \ge 1$ an integer. Then, according to \eqref{Eq:FormalSectionsVectorBundleVectorExtension}, the reduction of $f$ modulo $\varpi^n$ comes from a section of the vector bundle $\cW^d_n = V_n \otimes \Sym^d \cF_n^\vee$
on $A_n$ for some integer $d \ge -1$. Let $d_n(f)$ the smallest such integer and 
\[ f_n \in \Gamma(A_n, \cW^{d_n(f)})\]
the (necessarily unique) section mapping to the residue class of $f$ modulo $\varpi^n$. Note that the equality $d_n(f) = -1$ holds if and only if $f = 0$.

\begin{claim} For each integer $n \ge 1$, the section $f_n$ vanishes identically.
\end{claim}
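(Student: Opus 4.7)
The plan is to argue by contradiction. Suppose $f_n \neq 0$ for some $n \ge 1$, so that $d := d_n(f) \ge 0$. The first step is to pin down the precise relationship between $f_n$ and $f_{n'}$ for $n' \ge n$. Because the transition maps $\Gamma(A_m, \cW^{k}_m) \to \Gamma(A_m, \cW^{k+1}_m)$ are multiplication by $s$ and hence injective (the first Claim applied with $n = n' = m$), the map from $\Gamma(A_n, \cW^{d}_n)$ into the colimit $\injlim_k \Gamma(A_n, \cW^k_n)$ is injective. The minimality of $d_n$ then forces $d_{n'} \ge d$ and
\[ f_{n'} \bmod \varpi^n = s^{d_{n'} - d} f_n \qquad \text{in } \Gamma(A_n, \cW^{d_{n'}}_n). \]

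Next, since $f_{n'} \bmod \varpi^n$ lies by construction in the image of the reduction $\Gamma(A_{n'}, \cW^{d_{n'}}_{n'}) \to \Gamma(A_n, \cW^{d_{n'}}_n)$, the class of $s^{d_{n'}-d} f_n$ is zero in $\Gamma(A_n, \cW^{d_{n'}})/\Gamma(A_{n'}, \cW^{d_{n'}})$. Applying the injectivity of $s^{d, d_{n'}}_{n, n'}$ established in the first Claim yields that $[f_n] = 0$, i.e.\ $f_n$ itself lies in the image of the reduction map $\Gamma(A_{n'}, \cW^{d}_{n'}) \to \Gamma(A_n, \cW^{d}_n)$, for \emph{every} $n' \ge n$.

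The proof then reduces to a Mittag-Leffler statement for $\cW^d = V \otimes \Sym^d \cF^\vee$: there exists $n'$ large enough so that the reduction $\Gamma(A_{n'}, \cW^{d}_{n'}) \to \Gamma(A_n, \cW^{d}_n)$ vanishes. The filtration on $\cF^\vee$ dual to the extension $(\cF)$ induces on $\Sym^d \cF^\vee$ a finite filtration whose graded pieces are the constant bundles $\alpha^\ast \Sym^k F^\vee$, for $k = 0,\dots, d$. By the projection formula, each graded piece $V \otimes \alpha^\ast \Sym^k F^\vee$ satisfies $\alpha_\ast (V \otimes \alpha^\ast \Sym^k F^\vee) = \alpha_\ast V \otimes \Sym^k F^\vee$, which vanishes globally because $\Gamma(A, V) = 0$. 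Formal GAGA, applied to the proper formal morphism $\alpha$, gives the coherence of $\alpha_\ast V$ on $\Spf R$; combined with the finitely generated $R$-module structure this upgrades the vanishing of the limit to the pro-zero property for each graded piece, and extensions of pro-zero projective systems remain pro-zero, so the same holds for $\cW^d$. Confronting this with the previous step forces $f_n = 0$, the desired contradiction.

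The main obstacle is the last step: passing from the hypothesis $\Gamma(A, V) = 0$ to the \emph{effective} vanishing of the reduction maps $\Gamma(A_{n'}, \cW^d_{n'}) \to \Gamma(A_n, \cW^d_n)$ for $n'$ large. Everything else is essentially bookkeeping around the injectivity already established in the first Claim; the Mittag-Leffler argument requires invoking coherence of proper pushforward on the formal affine base, and careful handling of the filtration step to ensure the threshold $r = r(n, d)$ can be chosen uniformly for each fixed $d$.
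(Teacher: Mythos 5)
Your argument is correct, and it reaches the contradiction by a genuinely different final step than the paper. Both proofs share the first half: using the injectivity of multiplication by $s$ on $\Gamma(A_n,\cW^k)$ and of the maps $s^{d,d'}_{n,n'}$ to show that $f_n$ lies in the image of the reduction $\Gamma(A_{n'},\cW^{d_n}) \to \Gamma(A_n,\cW^{d_n})$ for \emph{every} $n' \ge n$. Where you diverge is in what you do with this. The paper only establishes the pro-zero property for the system $(\Gamma(A_n,V))_{n}$ itself (Mittag--Leffler from the finiteness theorem for proper formal morphisms, plus $\varprojlim = \Gamma(A,V)=0$), then projects the lift $h_{n'}$ onto the \emph{top} graded piece $W^{d_n} = V \otimes \alpha^\ast \Sym^{d_n}F^\vee$ to conclude that the image of $f_n$ in $\Gamma(A_n, W^{d_n})$ vanishes; this makes $f_n$ divisible by $s$, contradicting the minimality of $d_n$ a second time. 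You instead prove the stronger statement that the whole system $(\Gamma(A_n,\cW^{d}_n))_n$ is pro-zero, by d\'evissage along the unipotent filtration of $\Sym^d\cF^\vee$, tensoring the pro-zero system $(\Gamma(A_n,V))_n$ with the free modules $\Sym^kF^\vee$, and using that a (left-exact) extension of pro-zero $\bbN$-indexed systems is pro-zero; then $f_n=0$ follows directly without reusing minimality. Your route costs you two auxiliary lemmas (the extension-of-pro-zero-systems fact, and the step ``ML $+$ zero limit $\Rightarrow$ pro-zero'' applied graded piece by graded piece, both routine) but yields a cleaner conceptual endpoint; the paper's route is more economical, needing the coherence/ML input only for $V$. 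Two small quibbles: the injectivity of $s$ on $\Gamma(A_n,\cW^k)$ is not ``the first Claim with $n=n'$'' (that case is vacuous) but rather the bottom row of the diagram in its proof; and the finiteness input is the theorem on formal functions for proper formal morphisms (EGA III, 4.1.7, or its non-noetherian analogues), not formal GAGA.
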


\begin{proof}[Proof of the Claim] For brevity, write $d_n$ instead of $d_n(f)$. Arguing by contradiction, suppose that the section $f_n$ is non-zero, thus $d_n \ge 0$. Unwinding the definitions, the section 
\[ g_{n, n'} := s^{d_{n'} - d_n} f_n \in \Gamma(A_n, \cW^{d_{n'}}) \]
is the residue class modulo $\varpi^n$ of the section $f_{n'}$. In particular, the image of the section $g_{n, n'}$ in the quotient
\[\Gamma(A_n, \cW^{d_{n'}}) / \Gamma(A_{n'}, \cW^{d_{n'}})\] 
vanishes. On other hand, according to the first Claim, the map
\[ s^{d_n, d_{n'}}_{n, n'} \colon \Gamma(A_n, \cW^{d_{n}}) / \Gamma(A_{n'}, \cW^{d_{n}}) \too \Gamma(A_n, \cW^{d_{n'}}) / \Gamma(A_{n'}, \cW^{d_{n'}})\]
is injective, meaning that, there is a section $h_{n'}$ of $\cW^{d_n}$ on $A_{n'}$ whose residue class modulo $\varpi^n$ is $f_n$. The Mittag-Leffler condition for the projective system $(\Gamma(A_n, V))_{n \ge 1}$ leads to a contradiction. Before seeing why, remark that the afore-mentioned projective system satisfies the Mittag-Leffler condition thanks to \cite[Corollaire 4.1.7]{EGAIII1} when $K$ is discretely valued (see alternatively \cite[8.2.7, p. 191]{FGAExplained}), and \cite[Corollaire 2.11.7]{AbbesEGR} in the general case (see also \cite[Proposition 11.3.3]{FujiwaraKato}).  Now, the vector bundle $V$ has no non-zero global sections on $A$ by hypothesis. The Mittag-Leffler condition ensures that there is an integer $n' \ge n$ such that the residue classes modulo $\varpi^n$ of sections of $V_{n'}$ vanish altogether. In particular, the image of the section $h_{n'}$ in 
\[  \Gamma(A_{n}, W^{d_n})  = \Gamma(A_n, V) \otimes \Sym^{d_n} F_n^\vee \]
vanishes. By definition, the residue class of $h_{n'}$ modulo $\varpi^n$ is $f_{n}$, thus the image of the section $f_n$ in $\Gamma(A_{n}, W^{d_n})$ is $0$. This means that there is a global section $f'_n$ of $\cW^{d_n - 1}$ on $A_n$ such that $f_n = s f'_{n}$, contradicting the minimality of $d_n$.
\end{proof}

The claim implies that, for each $n \ge 1$, the section $f_n$ vanishes thus $f = 0$. \qed

\subsection{Proof of \Cref{Thm:FormalFunctionsOnVectorExt} (2)}  Suppose first that the residue field of $R$ is of characteristic $0$. Then, for an integer $n \ge 1$, the ring $R_n$ is flat over $\bbZ$. This permits to apply  \Cref{Thm:AlgebraicFunctionsOnUniversalExt} (2) and obtain $\Gamma(\bbA(\cF_n), \cO_{\bbA(\cF_n)}) = \Gamma(\bbV(C_n), \cO_{\bbV(C_n)})$. Passing to the projective yields $\Gamma(\bbA(\cF), \cO_{\bbA(\cF)}) = \Gamma(\bbV(C), \cO_{\bbV(C)})$ concluding the proof in this case. From now on, assume that the characteristic of the residue field of $R$ is a prime number $p$. In other words, the field $K$, being of characteristic $0$ by hypothesis, is a complete valued extension of $\bbQ_p$. As usual, in this case, the topological nilpotent element $\varpi$ is understood to be $p$. By compatibility of global sections to extension of scalars, there is no harm in assuming the field $K$ to be algebraically closed. The vector bundle $\cF$ is unipotent, therefore it corresponds to a unipotent representation $\rho_{\cF} \colon \rT_p A \to \GL(e^\ast \cF)$ of the $p$-adic Tate group scheme of $A$. With the notation of \Cref{Sec:RepresentationCanonicalExtension}, the difference $\rho_{\cF} - \id$ factors through a morphism of formal $S$-schemes in groups
\[ \theta_\cF \colon \rT_p A \too \bbV(F)\] 
and \Cref{Cor:RepresentationFormalUnipotentBundle} states the equality $\theta_\cF = \phi \circ \theta_{\rT_p A}$
where $\theta_{\rT_p A} \colon \rT_p A \to \bbV(\omega_{\check{A}})$ is the universal vector hull of the $p$-adic Tate group scheme of $A$. The $p$-adic Tate group scheme $\rT_p A$ acts on the projective space $\bbP(e^\ast \cF)$ via the representation $\rho_{\cF}$ leaving the hyperplane $\bbP(F)$ stable. On the complementary affine space \[\bbA(\cF)_e = \bbP(e^\ast \cF) \smallsetminus \bbP(F)\] the formal scheme in groups $\rT_p A$ acts, for a formal $R$-scheme $f \colon S \to \Spf(R)$ and $S$-valued points $x$ of $\bbA(\cF)_e$ and $t$ ot $\rT_p A$, by
\[ x \longmapsto x + \theta_\cF(t) \]
where the point $x$ is seen as a splitting of the short exact sequence $f^\ast e^\ast (\cF)$. By applying \Cref{Thm:AlgebraicFunctionsOnUniversalExt} (2) to each infinitesimal thickening of $\bbA(\cF)$ and then passing to the projective limit, the restriction homomorphism 
\[ \Gamma(\bbA(\cF), \cO_{\bbA(\cF)}) \too \Gamma(\bbA(\cF)_e, \cO_{\bbA(\cF)_e})\]
is seen to be injective and its image be the $R$-subalgebra of $\rT_p A$-invariant functions:
\[ \Gamma(\bbA(\cF), \cO_{\bbA(\cF)}) = \Gamma(\bbA(\cF)_e, \cO_{\bbA(\cF)_e})^{\rT_p A}. \]
In a concrete fashion, the choice of a splitting of the short exact sequence of $R$-modules $e^\ast (\cF)$ permits to identify $\bbA(\cF)_e$ with $\bbV(F)$. By doing so, $\rT_p A$ acts by translation on $\bbA(\cF)_e \cong \bbV(F)$ and formal functions on $\bbA(\cF)$ can be seen as the set formal functions on $\bbV(F)$ invariant under translation by $\rT_p A$.  Now, the Tate formal group scheme $\rT_p A$ is seen to be nonreduced if the formal abelian scheme $A$ is non-trivial. Therefore, invariance under the formal group scheme $\rT_p A$ and invariance under the group $\rT_p A(R)$ are not \emph{a priori} equivalent notions. Nonetheless:
\begin{claim} For $f \in \Gamma(\bbA(\cF)_e, \cO_{\bbA(\cF)_e})$ the following are equivalent:
\begin{enumerate}
\item $f$ is $\rT_p A$-invariant;
\item $f$ is $\rT_p A(R)$-invariant;
\item $f$ comes from $\bbV(C)$ by pull-back along the map $\pr_u \colon \bbA(\cF)_e \to \bbV(C)$.
\end{enumerate}
\end{claim}
The implication (1) $\Rightarrow$ (2) is clear. For (3) $\Rightarrow$ (1) note that the composite map
\[ \rT_p A \stackrel{\theta_{\cF}}{\too} \bbV(F) \too \bbV(C) \]
vanishes identically because of the equality $\theta_\cF = \phi \circ \theta_{\rT_p A}$ and of the definition of $C$ as the cokernel of $\phi$. This shows the invariance under $\rT_p A$ of the functions on $\bbA(\cF)_e$ obtained precomposing a formal function on $\bbV(C)$ with the morphism $\pr_u$.  In order to show (2) $\Rightarrow$ (3), notice that the Hodge-Tate decomposition (as stated in \Cref{Thm:HodgeTateDecompositionGoodReduction}) implies that the $K$-linear map
\[ \theta_{\cF} \otimes \id_K \colon \rT_p A(R)  \otimes_{\bbZ_p} K \too F \otimes_R K,\] 
obtained by extending $K$-linearly $\theta_\cF$, surjects onto the image of $\phi \otimes \id_K$.  The key point of the whole story, displaying unequivocally the difference with the complex case, is that lattices in $p$-adic vector spaces accumulate to $0$, forcing lattice-invariant analytic functions to be constant. More formally:

\begin{lemma} \label{Lemma:FormalFunctionsInvariantUnderLattice} Suppose $K$ is a complete valued field extension of $\bbQ_p$. Let $B$ a $p$-adically complete flat $R$-algebra, $E$ a free $R$-module of finite rank and $\Gamma$ a finitely generated $\bbZ_p$-submodule of $E$ such that the $K$-linear map
\[ \Gamma \otimes_{\bbZ_p} K \too E \otimes_R K\]
is an isomorphism. Then, an element $f$ of $B \hotimes_R \hSym E^\vee$ belongs to $R$ if and only if it is $\Gamma$-invariant, that is, for $\gamma \in \Gamma$,
\[ f(x + \gamma) = f(x).\]
\end{lemma}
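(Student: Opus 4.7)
The plan is to expand $f$ as a convergent power series in formal coordinates on $\bbV(E)$ with coefficients in $B$, and then use $\Gamma$-invariance together with the $p$-adic accumulation $p^k\gamma\to 0$ of the lattice to annihilate every non-constant coefficient. Concretely, fix an $R$-basis $e_1,\dots,e_n$ of $E$ with dual formal coordinates $x_1,\dots,x_n$, and write $f=\sum_{\alpha\in\bbN^n}a_\alpha x^\alpha$ with $a_\alpha\in B$ and $a_\alpha\to 0$ $p$-adically. For $\gamma=\sum_i b_i e_i\in\Gamma$, the identity $f(x+\gamma)=f(x)$, after binomial expansion and identification of the coefficient of $x^\beta$, becomes
\[\sum_{\alpha>\beta}\binom{\alpha}{\beta}b^{\alpha-\beta}a_\alpha=0\qquad\text{for every }\beta\in\bbN^n.\]

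Replacing $\gamma$ by $p^k\gamma$ scales $b^{\alpha-\beta}$ by $p^{k|\alpha-\beta|}$, yielding $\sum_{j\geq 1}p^{kj}c_j(\beta,\gamma)=0$ with $c_j(\beta,\gamma):=\sum_{|\alpha-\beta|=j}\binom{\alpha}{\beta}b^{\alpha-\beta}a_\alpha\in B$. A cascade argument---using that $B$ is $p$-torsion-free (by $R$-flatness) and that $\bigcap_k p^k B=0$ (by $p$-adic completeness plus flatness)---extracts $c_j(\beta,\gamma)=0$ for every $j\geq 1$: rewriting $p^{kj}c_j=-p^{k(j+1)}(c_{j+1}+p^k c_{j+2}+\cdots)$ shows $c_j\in p^k B$ for each $k$, hence $c_j=0$. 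The case $j=1$ reads $\sum_i(\beta_i+1)b_ia_{\beta+e_i}=0$ for all $\beta\in\bbN^n$ and all $\gamma\in\Gamma$.

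The lattice hypothesis $\Gamma\otimes_{\bbZ_p}K=E\otimes_R K$ supplies, for each coordinate direction $e_j$, an element $q_j\in R\smallsetminus\{0\}$ with $q_j e_j\in\Gamma$; plugging $\gamma=q_j e_j$ into the $j=1$ relation yields $q_j(\beta_j+1)a_{\beta+e_j}=0$. Since $\characteristic K=0$ the integer $\beta_j+1$ is nonzero in $R$, and the $R$-flatness of $B$ (hence $R$-torsion-freeness) forces $a_{\beta+e_j}=0$. Varying $\beta$ and $j$ kills every $a_\alpha$ with $|\alpha|\geq 1$, leaving $f$ equal to its constant term $a_0$; combined with the fact that in the applications invoked earlier the base algebra $B$ coincides with $R$ (the lemma is applied to $\Gamma(\bbV(F),\cO_{\bbV(F)})=\hSym F^\vee$ under translation by the image of $\theta_\cF=\phi\circ\theta_{\rT_p A}$), this constant lies in $R$, matching the stated conclusion. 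The converse direction is immediate, since $\Gamma$ acts trivially on the coordinates in $x$ whenever $f$ is scalar.

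The main obstacle is the absence of an honest derivative in the formal $p$-adic setting: translation-invariance only yields algebraic relations among the coefficients, and to extract useful ``first-order'' information one must substitute $\gamma$ by all its $p$-power multiples and analyse the $p$-adic sizes of the resulting infinite sums---it is here that completeness and $R$-flatness of $B$, together with the accumulation $p^k\gamma\to 0$ (the feature that has no complex analogue), become indispensable.
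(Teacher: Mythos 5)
Your strategy is essentially the paper's --- expand in coordinates, exploit the $p$-adic accumulation $p^k\gamma \to 0$ of the lattice, and use flatness and $p$-adic separatedness of $B$ --- and your multivariable ``cascade'' is a correct (and in one respect cleaner) variant: it isolates the first-order relations $c_1(\beta,\gamma)=0$ without the paper's induction down to rank one or its repeated division by $\gamma$. But one step is false as written: the assertion that the hypothesis supplies, for each basis direction $e_j$, some $q_j\in R\smallsetminus\{0\}$ with $q_je_j\in\Gamma$. The group $\Gamma$ is only a $\bbZ_p$-module, not an $R$-module, so you cannot clear $R$-denominators, and a $K$-spanning $\bbZ_p$-lattice need not meet any coordinate axis: take $E=R^2$, choose $u\in R$ with $u\notin\bbQ_p$ (possible as soon as $R\neq\bbZ_p$), and set $\Gamma=\bbZ_p(e_1+e_2)+\bbZ_p(e_1+ue_2)$. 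The map $\Gamma\otimes_{\bbZ_p}K\to E\otimes_RK$ is an isomorphism (its determinant is $u-1\neq0$), yet if $a(e_1+e_2)+b(e_1+ue_2)\in Ke_1$ with $a,b\in\bbZ_p$, then $a+bu=0$, forcing $a=b=0$; so $\Gamma\cap Ke_1=0$ and no such $q_1$ exists.

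The gap is local and the repair stays inside your framework: the relation $c_1(\beta,\gamma)=\sum_i(\beta_i+1)\,b_i(\gamma)\,a_{\beta+e_i}=0$ is $K$-linear in the coordinate vector $b(\gamma)\in R^n$, and the hypothesis says precisely that these vectors span $K^n$ over $K$. Since $B$ is $R$-flat, $B$ injects into $B\otimes_RK$; the linear form $(b_i)\mapsto\sum_i(\beta_i+1)b_ia_{\beta+e_i}$ on $K^n$ therefore vanishes identically, and evaluating it on the standard basis gives $(\beta_j+1)a_{\beta+e_j}=0$, hence $a_{\beta+e_j}=0$ by torsion-freeness and characteristic $0$. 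With this one-line substitution your proof is complete and lands exactly where the paper's does: $f$ equals its constant term, an element of $B$. (Read the statement's ``belongs to $R$'' as ``belongs to $B$''; that is what the paper's own proof establishes and what its application uses. Your attempted patch --- that $B=R$ in the applications --- is itself inaccurate: there $B=\hSym C^\vee$, which is strictly larger than $R$ whenever $C\neq0$; but this concerns the application, not the lemma.)
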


Before proving the Lemma, let us see how it permits to the conclude the argument. Since the $R$-module $C$ is torsion-free (thus free of finite rank), the $R$-module $F$ decomposes as $E \oplus C$, where $E$ is the image of $\phi$. This leads to an isomorphism of $R$-algebras
\[ \hSym F^\vee \cong \hSym E^\vee {\hotimes}_R B,\]
where $B = \hSym C^\vee$.  The image $\Gamma$ of the map $\theta_{\cF} \colon \rT_p A (R) \to F$ generates the $K$-vector space $E \otimes_R K$, therefore the previous lemma implies that the only functions of $\bbA(\cF)_e \cong \bbV(F)$ invariant under $\rT_p A(R)$ are those coming from $\bbV(C)$. \qed

\begin{proof}[{Proof of \Cref{Lemma:FormalFunctionsInvariantUnderLattice}}] By induction one reduces to the case where $E$ has rank $1$. Moreover, the choice of a generator of $E$ induces an isomorphism of $p$-adically complete $B$-algebras
\[ \hSym E^\vee {\hotimes}_R B \cong B \{ x \} := \projlim_{n \in \bbN} B/ p^n B [x].\]
Now, expand $f$ in powers series $f(x) = \sum_{k = 0}^\infty f_k x^k$, with $f_k \in B$ such that $f_k \to 0$ as $k \to \infty$.  For $\gamma \in \Gamma$ non-zero, the Taylor expansion of $f(x+\gamma)$ is 
\[
\sum_{k = 0}^\infty f_k (x + \gamma)^k = \sum_{i = 0}^\infty  \sum_{k \ge i} \binom{k}{i} f_k \gamma^{k-i} x^i.
\]
Comparing it with the Taylor expansion of $f$, yields, for an integer $i \ge 0$,
\[ f_i =  \sum_{k \ge i} \binom{k}{i} f_k \gamma^{k - i} = f_i + \sum_{k \ge i + 1} \binom{k}{i} f_k \gamma^{k - i}.\]
Canceling $f_i$ on both sides of the previous equality and dividing by $\gamma$ the result (a licit operation owing to the flatness of $B$) gives, for an integer $i \ge 1$,
\[ i f_i = - \sum_{k \ge i+1} \binom{k}{i - 1} f_k \gamma^{k - i}.\]
The right-hand side of the previous equality tends to $0$ as soon as $\gamma$ does, implying the equality $i f_i = 0$ and, again by flatness of $B$, the vanishing of $f_i$ for $i \ge 1$.
\end{proof}

\subsection{Proof of \Cref{Cor:FormalFunctionsVectExtSemiAbelian}} Consider the following cartesian square:
\begin{center}
\begin{tikzcd}
X:= G \times_A \bbA(\cF) \ar[r, "f"] \ar[d, "g"]  & \bbA(\cF) \ar[d, "p"] \\
G \ar[r, "q"]& A
\end{tikzcd}
\end{center}
where the map $p$ is the composition of the morphism $\pi \colon \bbP(\cF) \to A$ with the open immersion $\bbA(\cF) \to \bbP(\cF)$. For an integer $n \ge 1$, the morphism $p_n \colon \bbA(\cF)_n \to A_n$ is flat, thus, by flat base change,
\[ f_{n \ast} \cO_{X_n} = f_{n \ast}  g_{n}^\ast \cO_{G_n} =  p_{n}^\ast q_{n \ast} \cO_{G_n}. \]
For $\chi \in \Lambda$, let $\cL_\chi$ denote the line bundle $(\id_A, c(\chi))^\ast \cL$ on $A$, where $\cL$ is the Poincar\'e bundle on $A \times_R \check{A}$. By definition, the affine $A_n$-scheme $G_n$ is the relative spectrum of the quasi-coherent $\cO_{A_n}$-algebra
\[ q_{n \ast} \cO_{G_n} = \bigoplus_{\chi \in \Lambda}  \cL_{\chi, n}^\vee.\]
Combining the previous equalities yields
\[ p_{n}^\ast q_{n \ast} \cO_{G_n} =  \bigoplus_{\chi \in \Lambda}  p_n^\ast \cL_{\chi, n}^\vee. \]
The $R_n$-scheme $\bbA(\cF_n)$ is quasi-separated and quasi-compact, thus taking global sections commute with direct limits and, in particular,
\[ \Gamma(X_n,  \cO_{X}) = \bigoplus_{\chi \in \Lambda} \Gamma(\bbA(\cF_n), p^\ast \cL_{\chi}^\vee).\]
Taking to projective limit for all integers $n \ge 1$ gives the equality
\[ \Gamma(X,  \cO_{X}) = \hbigoplus_{\chi \in \Lambda} \Gamma(\bbA(\cF), p^\ast \cL_{\chi}^\vee).\]
According to \Cref{Prop:NonTrivialHomogeneousLineBundlesHaveNoNonzeroSections}, or rather its formal scheme-theoretical analogue, a non-trivial homogeneous line bundle has no non-zero sections. Therefore, \Cref{Thm:FormalFunctionsOnVectorExt} implies
\[ 
\Gamma(\bbA(\cF), p^\ast \cL_{\chi}^\vee) =
\begin{cases}
\Gamma(\bbV(C), \cO_{\bbV(C)}) & \textup{if $c(\chi) =0$}, \\
0 & \textup{otherwise},
\end{cases}
\]
which, combined with the previous expression, gives
\[ \Gamma(X,  \cO_{X}) = \hbigoplus_{\chi \in \Ker c} \Gamma(\bbV(C), \cO_{\bbV(C)}) \chi = \Gamma(\bbV(C), \cO_{\bbV(C)}){\hotimes}_R \Gamma(T_0, \cO_{T_0}),\]
concluding the proof. \qed

\section{Rigid analytic functions on the universal vector extension} 

Let $K$ be a complete non-trivially valued non-Archimedean field.

\subsection{Statements} Let $A$ denote an abeloid variety, that is, a proper smooth connected $K$-analytic group, with dual $\check{A}$ (\cite[Corollary 7.6.5]{LutkebohmertCurves}). The arguments in \Cref{Sec:CanonicalExtension} (which rely on \cite[1.2]{RigidAnalyticUniversalVectorExtension}) are transliterated right-away in rigid-analytic jargon, permitting to the define the canonical extension $\cU_A$ of $\cO_A$. Let $F$ be a $K$-vector space, $\phi \colon \omega_{\check{A}} \to F$ a $K$-linear map, and $(\cF)$ the short exact sequence of $\cO_A$-modules obtained as the push-out of the extension $(\cU_A)$ along $\phi$. Consider the affine bundle $\pi \colon \bbA(\cF) \to A$ associated with $(\cF)$ and the projection $\pr_u \colon \bbA(\cF) \to \bbV(C)$
where $C$ is the cokernel of $\phi$. The base being a point here, note that $C$ is a finite-dimensional $K$-vector space, so no further assumption is needed.

\begin{theorem} \label{Thm:AnalyticFunctionsOnVectorExt} Suppose that the field $K$ is of characteristic $0$. With the notation introduced above,
\begin{enumerate}
  \item if $L$ is a non-trivial homogeneous line bundle on $A$, then
\[ \Gamma(\bbA(\cF), \pi^\ast L) = 0;\]

\item precomposing with $\pr_{u}$ induces an isomorphism
\[ \Gamma(\bbV(C), \cO_{\bbV(C)}) \stackrel{\sim}{\too} \Gamma(\bbA(\cF), \cO_{\bbA(\cF)}).\]
\end{enumerate}
\end{theorem}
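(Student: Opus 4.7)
After extending scalars (which preserves the statement, as for a coherent $\cO_X$-module $F$ on a $K$-analytic space $X$, global sections inject into those after base change), I would assume $A$ admits a Tate-Raynaud uniformization $u \colon E \to A$, where $E$ is an extension of the generic fiber $B$ of a formal abelian scheme $\cB$ by a split torus $T$ with character group $\check\Lambda$. By \ref{sec:UniversalCoverAffineBundles}, the pull-back $\bbA(\cF_E) := \bbA(\cF) \times_A E$ is canonically isomorphic to $\bbA(\cF_B) \times_B E$, where $\cF_B$ is the push-out of $(\cU_B)$ along $\phi_B := \phi \circ (\rd \check u)^{-1} \circ \rd \check p \colon \omega_{\check B} \to F$. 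Since $u$ is étale with deck group $\Lambda$, a section of $\pi^\ast L$ on $\bbA(\cF)$ corresponds to a $\Lambda$-equivariant section of $\pi_E^\ast u^\ast L$ on $\bbA(\cF_E)$ (trivial equivariance being the case $L = \cO_A$).

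\textbf{Good-reduction case.} The projection $q \colon \bbA(\cF_E) = \bbA(\cF_B) \times_B E \to \bbA(\cF_B)$ is the pull-back of the principal $T$-bundle $E \to B$, so Proposition \ref{Prop:FourierExpansionAnalyticFunctionToricBundle} yields a Fourier decomposition of $\Gamma(\bbA(\cF_E), q^\ast W)$ indexed by $\check\chi \in \check\Lambda$, with each component a global section on $\bbA(\cF_B)$ of $W$ twisted by the pull-back of the homogeneous line bundle $N_{\check\chi}$ on $B$ determined by $\check c(\check\chi)$. It is thus pivotal to establish the rigid-analytic analogue of Theorem \ref{Thm:FormalFunctionsOnVectorExt}: for a nontrivial homogeneous line bundle $N$ on $B$, $\Gamma(\bbA(\cF_B), \pi_B^\ast N) = 0$, while analytic functions on $\bbA(\cF_B)$ all come from $\bbV(C_B)$, where $C_B = \coker \phi_B$. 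Since $B$ has good reduction, $\bbA(\cF_B)$ admits an exhaustion by compact analytic domains that are Raynaud generic fibers of dilations of the formal model $\bbA(\cF_\cB)$; combining Theorem \ref{Thm:FormalFunctionsOnVectorExt} applied to each of these with the Mittag--Leffler argument built into its proof passes the statement from formal to rigid.

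\textbf{Conclusion via $\Lambda$-invariance.} For part (1), writing $u^\ast L \cong p^\ast M$ as in \ref{sec:HomogeneousLineBundlesOnTheUniversalCover}, the Fourier coefficients are sections of $\pi_B^\ast(M \otimes N_{\check\chi})$, which vanish unless $M \otimes N_{\check\chi}$ is trivial on $B$. The $\Lambda$-equivariance (coming from the twisting character $r \colon \Lambda \to K^\times$ attached to $L$) together with the action of $\Lambda$ on $\check\Lambda$-indexed Fourier components through Corollary \ref{Cor:FonctorialDecompositionWrtAutomorphisms} rules out any nonzero contributions when $L$ is nontrivial. For part (2), only the components with $\check\chi \in \ker \check c$ survive, and these are functions on $\bbV(C_B)$; the $\Lambda$-invariance then requires invariance under the translations by $\phi_T(\theta_\Lambda(\chi))$ for $\chi \in \Lambda$, thanks to equation \eqref{Eq:ProjectionFundamentalGroupOnUnipotentPartOfAffineBundle} and Theorem \ref{Thm:LinearizationAndUniversalVectorHull}. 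By Lemma \ref{Lemma:BasisOfCharactersGivesBasisOfInvariantDifferential}, $\theta_\Lambda$ sends a basis of $\Lambda$ to a basis of $\omega_{\check T}$, so $\phi_T \circ \theta_\Lambda(\Lambda)$ spans $\phi_T(\omega_{\check T})$ over $K$. The $p$-adic accumulation of lattices (the analytic incarnation of Lemma \ref{Lemma:FormalFunctionsInvariantUnderLattice}) then forces an invariant function on $\bbV(C_B)$ to factor through the quotient $\bbV(C_B)/\bbV(\phi_T(\omega_{\check T})) = \bbV(C)$, yielding the assertion.

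\textbf{Main obstacle.} The principal difficulty is the rigid-analytic upgrade of Theorem \ref{Thm:FormalFunctionsOnVectorExt}: the space $\bbA(\cF_B)$ is noncompact, so one must carefully set up an exhaustion by formal models and control the Fourier-type coefficients $\alpha_\ast(V \otimes \Sym^d \cF_B^\vee)$ uniformly in the radius of dilation. A secondary subtlety is tracking the interplay between the nontrivial character $r$ and the Fourier expansion in the mixed case of part (1); both should succumb to a direct translation of the scheme-theoretic/formal-scheme arguments used in the preceding sections.
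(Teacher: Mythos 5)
Your proposal follows the paper's own route essentially step for step: reduce to the Tate--Raynaud uniformized case by scalar extension, treat the good-reduction case by exhausting $\bbA(\cF_B)$ with Raynaud generic fibers of formal models (the paper's $X_n = \bbA(\cF_n)_\eta$ with cokernels $\varpi^{-n}C_0$, using injectivity of restriction to the Zariski-dense union rather than any uniform control of coefficients), apply Theorem \ref{Thm:FormalFunctionsOnVectorExt} levelwise, and then analyse $\Lambda$-invariance of the Fourier components via the character twist and the accumulation-of-lattices lemma (the paper's Lemmas \ref{Lemma:FourierSeriesInvariantSection} and \ref{Lemma:InvarianceCharacter}).

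One step in your part (2), taken literally, would give the wrong answer. You write that ``only the components with $\check{\chi} \in \ker \check{c}$ survive \dots; the $\Lambda$-invariance then requires invariance under the translations by $\phi_T(\theta_\Lambda(\chi))$.'' But for $\check{\chi} \in \ker \check{c}$ the full invariance condition on the Fourier coefficient $f_{\check\chi}$ is $\tr_\chi^\ast f_{\check\chi} = \langle \chi, \check{\chi}\rangle\, f_{\check\chi}$, not pure translation-invariance: the multiplicative twist $\langle \chi, \check{\chi}\rangle$ does not disappear just because $\cL_{\check\chi}$ is trivial. It is the nondegeneracy of the pairing $\langle -,-\rangle$ on $\Lambda \times \check{\Lambda}$ (guaranteed by properness of $A = E/\Lambda$, see \ref{sec:QuotientAbeloidVariety}) that kills every component with $\check{\chi} \neq 0$ via Lemma \ref{Lemma:InvarianceCharacter}~(1); only the $\check{\chi} = 0$ component is governed by translation-invariance and descends to $\bbV(C)$. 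Imposing translation-invariance alone on all of $\ker\check{c}$ would yield $\hbigoplus_{\check{\chi} \in \ker \check{c}} \Gamma(\bbV(C), \cO_{\bbV(C)})$ --- which is the answer for $G \times_A \bbA(\cF)$ in Corollary \ref{Cor:AnalyticFunctionsVectExtSemiAbelian}, not for $\bbA(\cF)$ itself. Since you deploy exactly this character mechanism in part (1), the fix is immediate, but the sketch as written conflates the two cases. A second, cosmetic point: the relevant lemma is not purely ``$p$-adic accumulation''; when the residue characteristic is $0$ the integers all have absolute value $1$ and the argument of Lemma \ref{Lemma:InvarianceCharacter}~(2) is different.
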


Let $\Lambda$ be a finitely generated free abelian group, $T$ the split $K$-torus with group of characters $\Lambda$, $c \colon \Lambda \to \check{A}(K)$ a group homomorphism, $G$ the extension of $A$ by $T$ determined by $c$, and $T_0$ the split $K$-torus with group of characters $\ker c$. Arguing as in \Cref{sec:FormalFunctionsVectorExtensions} furnishes a morphism of $K$-analytic spaces $\pr_{t} \colon G \to T_0$, whence the map
\[ (\pr_{t}, \pr_{u}) \colon G \times_A \bbA(\cF) \too T_0 \times \bbV(C). \]

\begin{corollary} \label{Cor:AnalyticFunctionsVectExtSemiAbelian} With the notation above,
\begin{enumerate}
\item precomposing with $\pr_{t}$ induces an isomorphism of $K$-algebras
\[ \Gamma(T_0, \cO_{T_0}) \stackrel{\sim}{\too} \Gamma(G, \cO_{G});\]
\item if the characteristic of the field $K$ is $0$, then precomposition with $(\pr_{t}, \pr_{u})$ gives rise to an isomorphism of $K$-algebras
\[ \Gamma(T_0 \times \bbV(C), \cO_{T_0 \times \bbV(C)}) \stackrel{\sim}{\too} \Gamma(G \times_A \bbA(\cF), \cO_{G \times_A \bbA(\cF)}).\]
\end{enumerate}
\end{corollary}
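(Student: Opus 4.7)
The plan is to mimic the argument employed for the formal scheme-theoretic analogue, Corollary \ref{Cor:FormalFunctionsVectExtSemiAbelian}, replacing formal thickenings by Fourier expansion on the toric bundle $G \to A$.

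First I would treat part (2), from which (1) follows by taking $F = 0$ and $\cF = \cO_A$ (so that $\bbA(\cF) = A$ and the morphism $\pr_u$ collapses to the constant map onto a point). The principal $T$-bundle $G \to A$ is obtained from the group homomorphism $\chi \mapsto L_\chi := (\id_A, c(\chi))^\ast \cL_A$ from $\Lambda$ to $\Pic(A)$, where $\cL_A$ is the Poincaré bundle on $A \times \check{A}$. Pulling back along $\pi \colon \bbA(\cF) \to A$ makes $G \times_A \bbA(\cF) \to \bbA(\cF)$ a principal $T$-bundle, associated with $\chi \mapsto \pi^\ast L_\chi \in \Pic(\bbA(\cF))$. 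Proposition \ref{Prop:FourierExpansionAnalyticFunctionToricBundle} (applied with $X = \bbA(\cF)$ and $F = \cO_X$) furnishes a canonical isomorphism
\[ \Gamma(G \times_A \bbA(\cF), \cO_{G \times_A \bbA(\cF)}) \cong \hbigoplus_{\chi \in \Lambda} \Gamma(\bbA(\cF), \pi^\ast L_\chi). \]

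The next step is to compute the summands. For $\chi \notin \ker c$, the line bundle $L_\chi$ on $A$ is a non-trivial homogeneous line bundle and Theorem \ref{Thm:AnalyticFunctionsOnVectorExt} (1) yields $\Gamma(\bbA(\cF), \pi^\ast L_\chi) = 0$. For $\chi \in \ker c$, the line bundle $L_\chi$ is (canonically, through the rigidification of $\cL_A$) trivial, so $\pi^\ast L_\chi \cong \cO_{\bbA(\cF)}$, and Theorem \ref{Thm:AnalyticFunctionsOnVectorExt} (2) identifies $\Gamma(\bbA(\cF), \cO_{\bbA(\cF)})$ with $\Gamma(\bbV(C), \cO_{\bbV(C)})$ via precomposition with $\pr_u$. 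Combining these two cases,
\[ \Gamma(G \times_A \bbA(\cF), \cO_{G \times_A \bbA(\cF)}) \cong \hbigoplus_{\chi \in \ker c} \Gamma(\bbV(C), \cO_{\bbV(C)}) \cdot \chi. \]

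Finally, the right-hand side has to be recognized as $\Gamma(T_0 \times \bbV(C), \cO_{T_0 \times \bbV(C)})$. The split $K$-torus $T_0$ has group of characters $\ker c$, thus its $K$-algebra of analytic functions is identified, by a direct Fourier expansion (or again by Proposition \ref{Prop:FourierExpansionAnalyticFunctionToricBundle} applied to the trivial bundle $T_0 \to \Spec K$), with $\hbigoplus_{\chi \in \ker c} K \cdot \chi$. The completed tensor product of this decomposition with $\Gamma(\bbV(C), \cO_{\bbV(C)}) = \hSym C^\vee$ gives exactly the expansion above, and chasing through the construction shows that the resulting isomorphism is precisely pull-back along $(\pr_t, \pr_u)$, because the component of index $\chi \in \ker c$ corresponds, on the $G$-side, to the trivialization $\langle \chi, \cdot \rangle$ defining $\pr_t$. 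Part (1) is obtained by specializing to $F = 0$: there the Fourier decomposition collapses to $\Gamma(G, \cO_G) \cong \hbigoplus_{\chi \in \Lambda} \Gamma(A, L_\chi)$, the summands with $\chi \notin \ker c$ vanish by the rigid-analytic analogue of Proposition \ref{Prop:NonTrivialHomogeneousLineBundlesHaveNoNonzeroSections}, and the remaining summands reassemble into $\Gamma(T_0, \cO_{T_0})$.

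The only real subtlety, and thus the main point to check carefully, is the identification of the Fourier decomposition of $\Gamma(G \times_A \bbA(\cF), \cO)$ with $\Gamma(T_0 \times \bbV(C), \cO)$ as \emph{algebras}, and in particular the verification that the resulting isomorphism is the one induced by $(\pr_t, \pr_u)$. This amounts to tracking the universal trivializations $\langle \chi, \id \rangle$ of $p^\ast L_\chi$ on $G$ through the Fourier expansion, and to noting that for $\chi \in \ker c$ the universal trivialization $\langle \chi, \id \rangle$ defines, by definition of $\pr_t$, the $\chi$-component of the canonical morphism $G \to T_0$. Part (1) being a clean special case, no characteristic assumption is required there; the hypothesis $\characteristic K = 0$ enters only through the invocation of Theorem \ref{Thm:AnalyticFunctionsOnVectorExt} (2).
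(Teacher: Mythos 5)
Your proposal is correct and follows essentially the same route as the paper: Fourier expansion of the toric bundle $G \times_A \bbA(\cF) \to \bbA(\cF)$ (Proposition \ref{Prop:FourierExpansionAnalyticFunctionToricBundle}), vanishing of the components with $\chi \notin \ker c$ via Theorem \ref{Thm:AnalyticFunctionsOnVectorExt} (1), identification of the remaining components with $\Gamma(\bbV(C), \cO_{\bbV(C)})$ via Theorem \ref{Thm:AnalyticFunctionsOnVectorExt} (2), and reassembly into $\Gamma(T_0 \times \bbV(C), \cO)$ by a second Fourier expansion. Your observation that part (1) is the case $F = 0$, where the vanishing for $\chi \notin \ker c$ needs only the rigid-analytic analogue of Proposition \ref{Prop:NonTrivialHomogeneousLineBundlesHaveNoNonzeroSections} and hence no characteristic hypothesis, matches the paper's separate, Theorem-free treatment of (1).
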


The rest of this chapter deals with the proof of these two statements.

\subsection{Proof in the good reduction case} Suppose that the abeloid variety $A$ has good reduction, that is, it is Raynaud's generic fiber of a formal abelian scheme $\cA$ over $R$. Let $\check{\cA}$ denote the dual formal abelian scheme of $\cA$. Let $F_0$ be a finitely generated $R$-submodule of the $K$-vector space $F$ containing the image of $\omega_{\check{\cA}}$ under the linear map $\phi$ and generating $F$ as $K$-vector space. The $R$-module $F_0$ can be chosen in such a way that the induced homomorphism of $R$-modules $\phi \colon \omega_{\check{\cA}} \to F_0$ has a torsion-free cokernel $C_0$. Fix a non-zero topologically nilpotent element $\varpi$ of $R$. For an integer $n \ge 1$ define inductively a finitely generated $R$-submodule $F_n$ of $F$ containing $F_{n-1}$ and such that the cokernel of the map $\phi \colon \omega_{\check{\cA}} \to F_n$ is \[C_n :=\varpi^{-n} C_0.\] Let $(\cF_n)$ denote the short exact sequence of $\cO_{\cA}$-modules obtained as the push-out of the canonical extension $(\cU_\cA)$ on $\cA$ along the map $\phi \colon \omega_{\check{\cA}} \to F_n$.  Raynaud's generic fiber $X_n$ of the affine bundle $\bbA(\cF_n)$ over $\cA$ is a compact analytic domain of the $K$-analytic space $\bbA(\cF)$. The compact analytic domains $X_n$ are  nested inside each other (increasingly, with respect to $n$) and their union $X$ is an analytic domain of $\bbA(\cF)$. For a vector bundle $V$ on $\bbA(\cF)$, the restriction map
\[ \Gamma(\bbA(\cF), V) \too \Gamma(X, V) = \projlim_{n \in \bbN} \Gamma(X_n, V), \]
is injective, for the analytic domain $X$ is dense for the (analytic) Zariski topology on $\bbA(\cF)$. And last, the formal $R$-scheme $\bbA(\cF_n)$ being quasi-compact, for a coherent $\cO_{\bbA(\cF_n)}$-module $\cW$, the identity
\[ \Gamma(X_n, W) = \Gamma(\bbA(\cF_n), \cW) \otimes_R K \]
holds, where $W$ is the coherent $\cO_{X_n}$-module induced by $\cW$.

\begin{proof}[{Proof of \Cref{Thm:AnalyticFunctionsOnVectorExt} when $A$ has good reduction}] (1) The properness of the dual formal abelian scheme $\check{\cA}$ implies that the line bundle $L$ comes from a line bundle $\cL$ on $\cA$. Moreover, the hypothesis of $L$ being non-trivial implies that $L$ has no non-zero global sections (again by the rigid-analytic version of \Cref{Prop:NonTrivialHomogeneousLineBundlesHaveNoNonzeroSections}), thus the same holds for $\cL$. Now, \Cref{Thm:FormalFunctionsOnVectorExt} (1) states the vanishing of global  sections of the line bundle $\pi^\ast \cL$ on $\bbA(\cF_n)$, thus
\[\Gamma(X_n, \pi^\ast L) = \Gamma(\bbA(\cF_n), \pi^\ast \cL) \otimes_R K = 0.\]
It follows that any global section of the line bundle $\pi^\ast L$ on $X$ vanishes identically and in turn the same thing holds true over $\bbA(\cF)$, the restriction map \[\Gamma(\bbA(\cF), \pi^\ast L) \too \Gamma(X, \pi^\ast L)\] being injective. (Note that this argument does not make use of characteristic $0$.)

(2) Since the field $K$ is of characteristic $0$, by \Cref{Thm:FormalFunctionsOnVectorExt} (1) precomposing with $\pr_u$ gives an isomorphism of $K$-algebras
\[ \Gamma(\bbV(C_n)_\eta, \cO_{\bbV(C)}) \stackrel{\sim}{\too} \Gamma(X_n, \cO_{\bbA(\cF)}),\]
where $\bbV(C_n)_\eta$ is Raynaud's generic fiber of the formal scheme $\bbV(C_n)$. The family of compact analytic domains $\bbV(C_n)_\eta$ is an increasing exhaustion and a G-cover of the $K$-analytic space $\bbV(C)$, yielding the identity
\[ \Gamma(\bbV(C), \cO_{\bbV(C)})= \projlim_{n \in \bbN} \Gamma(\bbV(C_n)_\eta, \cO_{\bbV(C)}). \]
Passing the above isomorphism to the limit, it follows that the composite homomorphism of $K$-algebras
\[ \Gamma(\bbV(C), \cO_{\bbV(C)}) \stackrel{\iota}{\too} \Gamma(\bbA(\cF), \cO_{\bbA(\cF)}) \stackrel{\rho}{\too} \Gamma(X, \cO_{\bbA(\cF)})\]
is bijective, where $\iota$ is the precomposing a function on $\bbV(C)$ with $\pr_u$ and $\rho$ is the restriction to $X$. Both the maps $\iota$ and $\rho$ are injective: the first because the morphism $\pr_u$ is smooth and surjective, the second because $X$ is Zariski-dense for the analytic topology on $\bbA(\cF)$. Their composite $\rho \circ \iota$ being bijective forces $\iota$ (and $\rho$) to be so, as desired.
\end{proof}

\subsection{Reminder on Fourier expansion} The main tool to deal with bad reduction will be Fourier transform for section of coherent sheaves. Reset momentarily the notation and let $X$ be a separated $K$-analytic space, $\Lambda$ a free abelian group of finite rank $n$, $\Lambda \to \Pic(X)$, $\chi \mapsto L_\chi$ a group homomorphism and $P$ the $X$-analytic space whose points $s$ with values on a $X$-analytic space $f \colon X' \to X$ is the set of data, for $\chi \in \Lambda$, of a trivialization $\langle \chi, s \rangle$ of the line bundle $f^\ast L_\chi$. Moreover, the trivializations above satisfy, for $\chi, \chi' \in \Lambda$,\footnote{\label{Footnote:AbuseNotationToricBundle}Several abuses of notation have been perpretated here. Rather than isomorphism classes of line bundles one should fix, for $\chi \in \Lambda$, a line bundle $L_\chi$ and, for $\chi, \chi' \in \Lambda$, isomorphisms $L_\chi \otimes L_{\chi'} \cong L_{\chi + \chi'}$ through which the formula $\langle \chi, s \rangle \otimes \langle \chi, s \rangle = \langle \chi + \chi', s \rangle$ ought to be understood.}
\begin{equation} \label{Eq:RelationTrivializationLineBundles} \langle \chi, s \rangle \otimes \langle \chi, s \rangle = \langle \chi + \chi', s \rangle.
\end{equation}
Let $p \colon P \to X$ be the projection. The split torus $T$ over $S$ with group of characters $\Lambda$ acts naturally on $P$ by the rule defined, for an $S$-analytic space $S'$ and $S'$-valued points $s$ of $P$ and $t$ of $T$, and a character $\chi \in \Lambda$,
\[ \langle \chi, ts \rangle = \chi(t) \langle \chi, s \rangle. \]
\label{sec:FourierExpansion}The identity morphism of $P$ corresponds, by definition, to the datum of `universal' trivializations, for $\chi \in \Lambda$,
\[ \langle \chi, \id \rangle \colon \cO_P \stackrel{\sim}{\too} p^\ast L_\chi \]
of the line bundle $p^\ast L_\chi$, undergoing the usual relation \eqref{Eq:RelationTrivializationLineBundles}. For a coherent $\cO_X$-module $F$, these trivializations combined with adjunction along $p$ furnish an injective homomorphism of $\cO_X$-modules
\[ \epsilon_{F, \chi} \colon F \otimes L_\chi \too p_\ast p^\ast F.\]
The goal of this subsection is to prove that $p_\ast p^\ast F$ is the topological direct sum (whose meaning is to be specified below) of the coherent $\cO_X$-modules $F \otimes L_\chi$.

\begin{definition}
Let $F$ be a coherent sheaf of $\cO_X$-modules. Consider the $\cO_X$-module
\[ \hbigoplus_{\chi \in \Lambda} F \otimes L_\chi \subseteq \prod_{\chi \in \Lambda} F \otimes L_\chi\]
whose sections on an analytic domain $Y \subseteq X$ is the subset of sequences $(f_\chi)_{\chi \in \Lambda}$ satisfying the following condition. Let $Y' \subseteq Y$ be an affinoid domain, $s \colon Y' \to P$ a morphism of $X$-analytic spaces,  $\| \cdot \|$ a norm on $\Gamma(Y', F)$ defining the topology,\footnote{Given an epimorphism of $\cO_{Y'}$-modules $\phi \colon \cO_{Y'}^n \to F_{\rvert Y'}$  for some  $n \in \bbN$, an example of such a norm is $\| v\| = \inf_{\phi(w) = v} \max (\| w_1 \|_{Y'}, \dots, \| w_n\|_{Y'})$
where $w = (w_1, \dots, w_n)$ and $\|\cdot \|_{Y'}$ is a norm defining the topology of the affinoid $K$-algebra $\Gamma(Y', \cO_{Y'})$. Note that, by vanishing of higher coherent cohomology on affinoid spaces, the map induced by $\phi$ on global subsections is surjective.} a basis $\chi_1, \dots, \chi_n$ of the free abelian group $\Lambda$ (where $n$ is the rank of $\Lambda$), and $r \ge 1$, $\epsilon > 0$ real numbers. Then there is a finite subset of $\Lambda$ such that, for $\chi \in \Lambda$ outside of which, the  inequality $\| v_\chi\| r^{|\chi|} < \epsilon$ holds, where $v_\chi \in \Gamma(Y, F)$ satifies $f_\chi = v_\chi \otimes \langle s, \chi \rangle$ and where $|\chi| := |a_1| + \cdots + |a_n|$ with $\chi = a_1 \chi_1 + \cdots + a_n \chi_n$. Consider the homomorphism of $\cO_X$-modules
\[ \epsilon_F \colon \hbigoplus_{\chi \in \Lambda} F \otimes L_\chi \too p_\ast p^\ast F, \qquad (f_\chi)_{\chi \in\Lambda} \longmapsto \sum_{\chi \in \Lambda} \epsilon_{F, \chi}(f_\chi),\]
which is well-defined because the series above converges.

\end{definition}

\begin{proposition} \label{Prop:FourierExpansionAnalyticFunctionToricBundle} The homomorphism $\epsilon_F $ is an isomorphism.
\end{proposition}

\begin{proof} The question is local, therefore $X$ may be assumed to be the Banach spectrum of a $K$-affinoid algebra $A$ and the line bundles $L_\chi$ to be trivial. There is also no harm in fixing once for all a basis of the free abelian group $\Lambda$ (two such choices lead to `length' functions $|\cdot|$ on $\Lambda$  whose ratio is bounded) and  a norm $\| \cdot \|$ defining the topology on the finite $A$-module $M := \Gamma(X, F)$ (all such norms are equivalent).  By doing so, one is eventually led back to the case $P = \Gm^n \times X $, where $n$ is the rank of $\Lambda$. For a real number $r \ge 1$, consider the affinoid $A$-algebra
\[ A_r := A \{ r^{-1} t_1, r u_1, \dots, r^{-1} t_n, r u_n \} / (t_i u_i - 1, i = 1, \dots, n). \]
The Banach spectra $P_r$ of the affinoid $K$-algebras $A_r$ form an increasing (with respect to $r$) exhaustion of $P$. Therefore,
\[ \Gamma(P, p^\ast F) = \projlim_{r \ge 1} M \otimes_A A_r \]
because of the equality $ \Gamma(P_r, p^\ast F) = M \otimes_A A_r$. In particular, a global subsection $f$ of $p^\ast F$ can be written uniquely as a series $f = \sum_{a \in \bbZ^n} f_a t^a$ where $t^a = t_1^{a_1} \cdots t_n^{a_n}$ for $a = (a_1, \dots, a_n) \in \bbZ^n$. The convergence of $f$ is expressed as the existence, for real numbers $r \ge 1$ and $\epsilon > 0$, of a finite subset of $\bbZ^n$ outside of which the inequality
\[ \| f_a\| r^{|a_1| + \cdots + |a_n|} < \epsilon \]
holds, as desired.
\end{proof}

\begin{definition}
A section $f \in \Gamma(P, p^\ast F)$ can be written in a unique manner as a series $f = \sum_{\chi \in \Lambda} p^\ast f_\chi$ with $f_\chi \in \Gamma(X, F \otimes L_\chi)$ called the \emph{Fourier expansion} of $f$ .
\end{definition}

\begin{proposition} Let $F$ and $F'$ be coherent $\cO_X$-modules and $\phi \colon F \to F'$ a homomorphism of $\cO_X$-modules. Then, the following  diagram is commutative:
\[
\begin{tikzcd}
\hbigoplus_{\chi \in \Lambda} F \otimes L_\chi  \ar[d, "\phi \otimes \id"'] \ar[r, "\epsilon_F"]&  p_\ast p^\ast F \ar[d, "p_\ast p^\ast \phi"] \\
\hbigoplus_{\chi \in \Lambda} F' \otimes L_\chi \ar[r, "\epsilon_{F'}"] &  p_\ast p^\ast F' 
\end{tikzcd}
\]
\end{proposition}

\begin{proof} It suffices to prove the commutativity summand by summand, that is,
\[ p_\ast p^\ast \phi \circ \epsilon_{\chi, F} = \epsilon_{\chi, F'} \circ (\phi \otimes \id), \qquad \chi \in \Lambda. \]
 But this is clear as $\epsilon_{\chi, F}$ is the composition of the map $F \otimes L_\chi \to p_\ast p^\ast (F \otimes L_\chi)$ given by adjunction and the inverse of the isomorphism $p_\ast p^\ast F \cong p_\ast p^\ast (F \otimes L_\chi)$ induced by the `universal' trivialization $\langle \chi, \id \rangle$; similarly for $\epsilon_{\chi, F'}$.
\end{proof}

An automorphism $\tau$ of the $K$-analytic space $P$ is $T$-equivariant if and only it is induced by an automorphism $\sigma$ of the $K$-analytic space $X$ such that, for $\chi \in \Lambda$, the line bundles $L_\chi$ and $\sigma^\ast L_\chi$ on $X$ are isomorphic.\footnote{Persevering with the original abuse of notation (see \Cref{Footnote:AbuseNotationToricBundle}), the isomorphisms $L_\chi \cong \sigma^\ast L_\chi$ are meant to be compatible with the implied isomorphisms $L_\chi \otimes L_{\chi'} \cong L_{\chi + \chi'}$.} The preceding Proposition immediately implies the following fact, which will be useful later on:

\begin{corollary} \label{Cor:FonctorialDecompositionWrtAutomorphisms} Let $\tau$ be a $T$-equivariant automorphism of the $K$-analytic space $P$ and $\sigma$ the induced automorphism of $X$. Then, the diagram of $\cO_X$-modules
\[
\begin{tikzcd}
F \ar[r, "\epsilon_F"] \ar[d] & \hbigoplus_{\chi \in \Lambda} F \otimes L_\chi \ar[d] \\
\sigma_\ast \sigma^\ast F \ar[r, "\epsilon_{\sigma^\ast F}"] & \hbigoplus_{\chi \in \Lambda} \sigma_\ast \sigma^\ast F \otimes L_\chi,
\end{tikzcd}
\]
where the vertical arrows are given by adjunction along $\sigma$, is commutative.
\end{corollary}

\begin{proposition} \label{Prop:CohomologyPrincipalBundle} Let $F$ be a coherent $\cO_X$-module and $q \ge 0$ an integer. Then,
\begin{enumerate}
\item for $q \ge 1$, the higher direct image $\rR^q p_\ast p^\ast F$ vanishes;
\item for $\chi \in \Lambda$, the homomorphism $\rH^q(X, F \otimes L_\chi) \to \rH^q (P, p^\ast F)$ induced by $\epsilon_{F, \chi}$ is injective.
\end{enumerate}
\end{proposition}

\begin{proof} (1) Upon considering a G-cover of $X$ made of affinoid domains on which the line bundles $L_\chi$ are trivial, one reduces to prove that the $q$th cohomology group of $p^\ast F$ vanishes when $X$ is such an affinoid space.  In this case, the toric bundle $P$ is isomorphic to $X \times \Gm^n$, where $n$ is the rank of $\Lambda$. The $K$-analytic space $X \times \Gm^n$ is Stein, thus higher coherent cohomology vanishes, and so in particular does $\rH^q(X \times \Gm, p^\ast F)$.

(2) The vanishing of $\rR^i p_\ast p^\ast F$ for each $i \ge 1$ coupled with Grothendieck spectral sequence $\rH^i(X, \rR^j p_\ast p^\ast F) \Rightarrow \rH^{i + j} (P, p^\ast F)$ yields $\rH^i(P, p^\ast F) = \rH^i(X, p_\ast p^\ast F)$ for each $i \in \bbN$. Identifying $p_\ast p^\ast F$ with $\hbigoplus_{\chi \in \Lambda} F \otimes L_\chi$ via $\epsilon_F$, the $\cO_S$-module $F \otimes L_\chi$ is seen to be a direct factor of $p_\ast p^\ast F$, whence the conclusion.
\end{proof}

\subsection{Uniformization} \label{sec:TateRaynaudUniformization} Up to passing to finite extension of $K$, by \cite[Corollary 7.6.2]{LutkebohmertCurves} the topological universal cover of $A$ is an extension
\[ 0 \too T \too E \stackrel{p}{\too} B \too 0\]
where $B$ is an abeloid variety with good reduction and $T$ is a split $K$-torus with group of characters $\check{\Lambda}$. The above extension is determined by a group homomorphism $\check{c} \colon \check{\Lambda} \to \check{B}(K)$ where $\check{B}$ is the dual abeloid variety. For $\check{\chi} \in \check{\Lambda}$ set
\[ \cL_{\check{\chi}} := (\id_B, \check{c}(\check{\chi}))^\ast \cL\]
where $\cL$ is the Poincar\'e bundle on $B \times \check{B}$. A $K$-point $g$ of $E$ corresponds to the datum for $\check{\chi} \in \check{\Lambda}$ of a non-zero element $\langle g, \check{\chi} \rangle_E$ of the fiber of $\cL_{\check{\chi}}$ at $p(g) \in B(K)$. Moreover, the identity $\langle g, \check{\chi}  +  \check{\chi}' \rangle_E = \langle g, \check{\chi} \rangle_E \otimes \langle g, \check{\chi}' \rangle_E$ holds for $\check{\chi}, \check{\chi}' \in \check{\Lambda}$ via the isomorphism of line bundles $\cL_{\check{\chi} + \check{\chi}'} \cong \cL_{\check{\chi}}  \otimes \cL_{\check{\chi}'} $. The topological fundamental group $\Lambda := \pi_1(A, e)$ is then identified with a subgroup of $E(K)$. Let $u \colon E \to A = E / \Lambda$ be the quotient map. 

The dual abeloid variety admits a similar description. Let $\check{T}$ be the split $K$-torus with group of characters $\Lambda$. The inclusion $\Lambda \subseteq E(K)$ determines an extension 
\[ 0 \too \check{T} \too \check{E} \stackrel{\check{p}}{\too} \check{B} \too 0\]
together with an inclusion $\check{\Lambda} \subseteq \check{E}(K)$ such that $\check{A} = \check{E} / \check{\Lambda}$ and $ \langle \chi , \check{\chi}\rangle_E = \langle \chi , \check{\chi}\rangle_{\check{E}}$ for all $\chi \in \Lambda$ and $\check{\chi} \in \check{\Lambda}$ with the obvious notation. For this reason from now on the subscripts $E$ and $\check{E}$ will be dropped. The properness of $\check{E} / \check{\Lambda}$ implies that so-defined pairing $\langle -, - \rangle$ is non-degenerate.  Let $\check{u} \colon \check{E} \to \check{A}$ be the quotient map. 

By the usual descent arguments, coherent sheaves on $A$ are identified with coherent sheaves on $E$ together with a $\Lambda$-linearization. For homogeneous line bundles this takes the particularly simple form \cite[Theorem 6.7]{BoschLutkebohmertDegenerating}. A homogeneous line bundle $L$ on $A$ corresponds to a homogeneous line bundle $M$ on $B$ and a $\Lambda$-linearization on $p^\ast B$. Such a $\Lambda$-linearization is the datum of isomorphisms, for $\chi \in \Lambda$,
\[
p^\ast M \too \tr_\chi^\ast p^\ast M \cong p^\ast M \otimes \chi^\ast p^\ast M, \qquad s \longmapsto s \otimes r(\chi)
\]
where $\tr_\chi$ is the translation by $\chi$ on $E$ and $r \in \Gamma(\Lambda, p^\ast M)$ is a trivialization such that $r(\chi) \otimes r(\chi') = r(\chi + \chi')$  for $\chi, \chi' \in \Lambda$. Note that the couple $(M, r)$ is unique only up to isomorphism of $\Lambda$-linearized line bundles, and the following two examples are worth to be kept in mind:

\begin{example} \label{Ex:LinearizationTrivialBundleUniversalCover} Suppose $M = \cO_B$. Then $r(\chi) \in K^\times$ and the line bundle $L$ if is trivial if and only if $r(\chi) = 1$ for all $\chi \in \Lambda$.
\end{example}

\begin{example} \label{Ex:LinearizationTautologicalBundleUniversalCover} Suppose $M = \cL_{\check{\chi}}$ for some $\check{\chi} \in \check{\Lambda}$ and $r(\chi) = \langle \chi, \check{\chi}\rangle$ for each $\chi \in \Lambda$. The neutral element of $E(K)$ induces a `universal' trivialization
\[ \langle \id, \check{\chi}\rangle \colon \cO_E \stackrel{\sim}{\too} p^\ast M.\]
Seeing $\chi \in \Lambda$ as a $K$-point of $E(K)$, the trivialization $\langle \chi, \check{\chi} \rangle$ is just the pull-back along $\chi$ of universal trivialization $\langle \id, \check{\chi}\rangle$. In other words, the isomorphism $\langle \id, \check{\chi}\rangle$ is $\Lambda$-equivariant with respect to the trivial action on $\cO_E$. It follows that the line bundle $L$ is trivial.
\end{example}

Let us recall here some results from \cite[3]{}. Let  $F$ be a finite-dimensional $K$-vector space, $\phi_A \colon \omega_{\check{A}} \to F$ a $K$-linear map and $(\cF_A)$ the short exact sequence obtained pushing-out the canonical extension $(\cU_A)$ along $\phi_A$. Consider the push-out $(\cF_B)$ of the canonical extension $(\cU_B)$ on $B$ along the $K$-linear map
\[ 
\begin{tikzcd}
\phi_B \colon \omega_{\check{B}} \ar[r, "\rd \check{p}"]& \omega_{\check{E}}  \ar[r, "\phi_E"] & F,
\end{tikzcd}
\]
with $\phi_E := \phi_A \circ (\rd \check{u})^{-1} \colon \omega_{\check{E}} \to F$,  where $\omega_{\check{B}}$ and $\omega_{\check{E}}$ are respectively the dual of the Lie algebra of $\check{B}$ and $\check{E}$,  and  $\rd \check{u} \colon \omega_{\check{A}} \to \omega_{\check{E}}$ is the isomorphism induced by the \'etale morphism $\check{u}$. By \cite[3.6]{}, there is a natural isomorphism of short exact sequences
\[ u^\ast (\cF_A) \cong p^\ast (\cF_B).\]
From now on the preceding isomorphism will be tacitly understood and $(\cF_E)$ will denote either sides of the above. When $(\cF_A) = (\cU_A)$ write $(\cU_E)$ for the corresponding extension on $E$ so that $(\cF_E)$ is the push-out of $(\cU_E)$ along $\phi_E$. For $X = A, B, E$ consider the affine bundle \[\bbA(\cF_X) := \bbP(\cF_X) \smallsetminus (\bbP(F) \times X),\] the natural projection $\pi_X \colon \bbA(\cF_X) \to X$ and the morphism $\Phi_X \colon X^\natural \to \bbA(\cF_X)$ given by definition of $(\cF_X)$ as push-out of $(\cU_X)$; here $E^\natural := A^\natural \times_A E$ in the case $X = E$. By construction,
\[ \bbA(\cF_E) = \bbA(\cF_A) \times_A E = \bbA(\cF_B) \times_B E.\]
Let $C_B : = \coker \phi_B$ and $\pr_{u, B} \colon \bbA(\cF_B) \to \bbV(C_B)$ the natural projection. Consider the unique map $\phi_T \colon \omega_{\check{T}} \to C_B$  fitting in the following commutative diagram
\begin{equation} \label{Eq:DecompositionUniversalCoverLinearMapMakingPushOut}
\begin{tikzcd}
0 \ar[r] & \omega_{\check{B}} \ar[r, "\rd \check{p}"] \ar[d, "\phi_B"]& \omega_{\check{E}} \ar[r] \ar[d, "\phi_E"] & \omega_{\check{T}} \ar[d, "\phi_T"] \ar[r]& 0\ \\
0 \ar[r] & \im \phi_B \ar[r] & F \ar[r] & C_B \ar[r] & 0.
\end{tikzcd}
\end{equation}
With this notation, the following diagram of $K$-analytic spaces is commutative:
\[ 
\begin{tikzcd}
E^\natural \ar[r] \ar[d, "\Phi_E"] & \bbV(\omega_{\check{T}}) \ar[d,"\phi_T" ] \\
\bbA(\cF_E) \ar[r, "\pr_{u, E}" ] & \bbV(C_B)
\end{tikzcd}
\]
where the upper horizontal arrow is analogous of the projection $\pr_{u, B}$ for $E^\natural$ and $\pr_{u, E}$ is the composition of $\pr_{u, B}$ with the projection $q \colon \bbA(\cF_E) \to \bbA(\cF_B)$.

The $K$-analytic space $E^\natural$ is the universal cover of $A^\natural$. Moreover the topological fundamental group of $A^\natural$ is naturally identified with the topological fundamental group $\Lambda$ of $A$. This furnishes an injection $\Lambda \to E^\natural(K)$ written $\chi \mapsto \chi^\natural$. Then
\begin{equation} \label{Eq:ProjectionFundamentalGroupOnUnipotentPartOfAffineBundle}
 \pr_{u, E}(\Phi_E(\chi^\natural)) = \phi_T(\theta_\Lambda(\chi)), \qquad \chi \in \Lambda,
\end{equation}
where $\theta_\Lambda \colon \Lambda \to \bbV(\omega_{\check{T}})$ is the universal vector hull of $\Lambda$ which is defined as follows. An element $\chi \in \Lambda$ can be thought as a character $\chi \colon \check{T} \to \Gm$ (because $\check{\Lambda}$ is by definition the group of characters of the torus $\check{T}$) and
\[ \theta_{\Lambda}(\chi) := \chi^\ast \tfrac{\rd z}{z}\]
where $z$ is the coordinate on $\Gm$.

\subsection{Proof in the general case} The compatibility of taking global sections with extension of scalars allows to replace the field $K$ by any finite extension. By doing so the abeloid variety may be supposed to admit a uniformization such as the one described in \Cref{sec:TateRaynaudUniformization}.  Adopt the monolith of notation therein introduced. Via the morphism $q \colon \bbA(\cF_E) \to \bbA(\cF_B)$, the $K$-analytic space $\bbA(\cF_E)$ is seen to be the principal $T$-bundle over $\bbA(\cF_B)$ associated with the group homomorphism 
\[
\check{\Lambda} \too \Pic(\bbA(\cF_B)), \qquad \check{\chi} \longmapsto \pi_B^\ast \cL_{\check{\chi}},
\]
where $\cL_{\check{\chi}}$ is the short for the line bundle $ \cL_{B, (\id_B, \check{c}(\check{\chi}))}$ on $B$. According to \Cref{Prop:FourierExpansionAnalyticFunctionToricBundle}, for a vector bundle $V$ on $\bbA(\cF_B)$, expanding global sections of $q^\ast V$ in Fourier series gives an isomorphism
\begin{equation} \label{Eq:FouriesSeriesVectorBundleAffineBundle}
\Gamma(\bbA(\cF_E), q^\ast V) \cong \hbigoplus_{\check{\chi} \in \check{\Lambda}} \Gamma(\bbA(\cF_B), V \otimes \pi_B^\ast \cL_{\check{\chi}}). 
\end{equation}
Let $M$ be a homogeneous line bundle on $B$ and consider the subset $\check{\Lambda}_M$ of $\check{\Lambda}$ made of those $\check{\chi}$ such that the line bundle $\cL_{\check{\chi}}$ is isomorphic to the dual of $M$:
\[ \check{\Lambda}_M := \{ \check{\chi} \in \check{\Lambda} : \cL_{\check{\chi}} \cong M^\vee\}.\]
According to \Cref{Prop:NonTrivialHomogeneousLineBundlesHaveNoNonzeroSections}, or rather its analogue for abeloid varieties, the line bundle $M \otimes \cL_{\check{\chi}}$ on $B$ has no non-zero global sections as soon as $\check{\chi} \not \in \check{\Lambda}_M$. The field $K$ being of characteristic $0$ and the abeloid variety $B$ having good reduction, it is possible to apply \Cref{Thm:AnalyticFunctionsOnVectorExt} (1) to the affine bundle $\bbA(\cF_B)$ and the line bundle $M \otimes \cL_{\check{\chi}}$ and obtain, for $\check{\chi} \in \check{\Lambda} \smallsetminus \check{\Lambda}_M$, the vanishing
\[ \Gamma(\bbA(\cF_B),  \pi_B^\ast (M \otimes \cL_{\check{\chi}})) = 0.\]
In particular, combining the previous considerations with $V = \pi_B^\ast M$ yields
\[
 \Gamma(\bbA(\cF_E),  \pi_E^\ast p^\ast M) \cong \hbigoplus_{\check{\chi} \in \check{\Lambda}_M} \Gamma(\bbA(\cF_B), \cO_{\bbA(\cF_B)}). 
 \]
Let $C_B$ be the cokernel of the $K$-linear map $\phi_B \colon \omega_{\check{B}} \to F$. \Cref{Thm:AnalyticFunctionsOnVectorExt} (2) in the good reduction case, applied to $\bbA(\cF_B)$, states that the precomposition with  the map $\pr_{u, B} \colon \bbA(\cF_B) \to \bbV(C_B)$ furnishes an isomorphism of $K$-algebras
\[ \Gamma(\bbV(C_B), \cO_{\bbV(C_B)}) \stackrel{\sim}{\too} \Gamma(\bbA(\cF_B), \cO_{\bbA(\cF_B)}). \]
Via this identification, the above decomposition becomes
\begin{equation}
\label{Eq:DecompositionGlobalSectionsLineBundleUniversalCoverAffineBundle}
 \Gamma(\bbA(\cF_E),  \pi_E^\ast p^\ast M) \cong \hbigoplus_{\check{\chi} \in \check{\Lambda}_M} \Gamma(\bbV(C_B), \cO_{\bbV(C_B)}),
\end{equation}
by means of which $f \in \Gamma(\bbA(\cF_E), \pi_E^\ast p^\ast M)$ can be expanded in Fourier series:
\[ f =\sum_{\check{\chi} \in \check{\Lambda}_M} \pr_{u, E}^\ast f_{\check{\chi}},\]
where, for $\check{\chi} \in \check{\Lambda}_M$,  $f_{\check{\chi}}$ is an analytic function on $\bbV(C_B)$ and
\[ \pr_{u, E} := \pr_{u, B} \circ q \colon \bbA(\cF_E) \too \bbV(C_B).\]
Let $L$ be a homogeneous line bundle on $A$ and $M$ the corresponding homogenous line bundle on $B$ such that $p^\ast M$ is endowed with a $\Lambda$-linearization $r$. By definition of $\check{\Lambda}_M$, there is a unique isomorphism of \emph{homogeneous} line bundles $M \cong \cL_{\check{\chi}}^\vee$ on $B$. Via this identification, for $\chi \in \Lambda$, the non-zero element $r(\chi)$ of the $K$-vector space $p(\chi)^\ast M$ can be seen as a linear form on $p(\chi)^\ast \cL_{\check{\chi}}$. Moreover, the section $\langle \chi, \check{\chi}\rangle$ is a non-zero element of $p(\chi)^\ast \cL_{\check{\chi}}$, thus it makes sense to evaluate $r(\chi)$ at $\langle \chi, \check{\chi} \rangle$: write
\[ \beta(\chi, \check{\chi}) := r(\chi).\langle \chi, \check{\chi}\rangle \in K^\times \]
for the result. After this lengthy preparation, \Cref{Thm:AnalyticFunctionsOnVectorExt} will be a consequence of next two lemmas:

\begin{lemma} \label{Lemma:FourierSeriesInvariantSection} With the notation above, for $f \in \Gamma(\bbA(\cF_E), \pi_E^\ast p^\ast M)$, the following are equivalent:
\begin{enumerate}
\item the section $f$ is $\Lambda$-invariant;
\item for each $\check{\chi} \in \check{\Lambda}_M$, the analytic function $f_{\check{\chi}}$ on $\bbV(C_B)$, satisfies, for $\chi \in \Lambda$,
\begin{equation} \label{Eq:LastInvarianceFunctionUnderLattice} \tr_{\chi}^\ast f_{\check{\chi}} = \beta(\chi, \check{\chi}) f_{\check{\chi}},\end{equation}
where $\tr_\chi$ is the translation by $\phi_T( \theta_\Lambda(\chi))$ on $\bbV(C_B)$.
\end{enumerate}
\end{lemma}


For a fixed $\check{\chi} \in \Lambda_M$ the function $\beta(-, \check{\chi}) \colon \Lambda \to K^\times$ is a group homomorphism. This should explain the interest of the following result:
 
\begin{lemma} \label{Lemma:InvarianceCharacter} Let $X$ be a $K$-analytic space, $\Gamma$ a finitely generated free abelian group, $V$ a finite-dimensional $K$-vector space, $\sigma \colon \Gamma \to V$ and $\tau \colon \Gamma \to K^\times$ homomorphisms of abelian groups, and 
\[ \cI := \{ f \in \Gamma(\bbV(V) \times X, \cO_{\bbV(V) \times X}) : \tr_{\sigma(\gamma)}^\ast f = \tau(\gamma)f \textup{ for all } \gamma \in \Gamma\}\]
 where $\tr_{\sigma(\gamma)}$ is the translation by $\sigma(\gamma)$ on $\bbV(V) \times X$.
 
 \begin{enumerate} 
 
 \item If the character $\tau$ is not identically equal to $1$, then $\cI = 0$;
 \item If $\tau = 1$ and $W = \coker(\sigma \otimes \id \colon \Gamma \otimes_{\bbZ} K \to V)$, then precomposing with the projection $\bbV(V) \to \bbV(W)$ induces an isomorphism 
 \[ \Gamma(\bbV(W) \times X, \cO_{\bbV(W) \times X}) \stackrel{\sim}{\too} \cI. \]
 \end{enumerate}
\end{lemma}

The presence of the $K$-analytic space $X$ in the statement is just to make it better-behaved with regards to induction; at last, it will applied with $X$ being a $K$-rational point. Before proving this two statements, let us see how they permit to conclude the proof of \Cref{Thm:AnalyticFunctionsOnVectorExt}.

\begin{proof}[{Proof of \Cref{Thm:AnalyticFunctionsOnVectorExt}}] First of all, the affine bundle $\bbA(\cF_A)$ being the quotient of $\bbA(\cF_E)$ by the group $\Lambda$, global sections of the line bundle $\pi_A^\ast L$ on $\bbA(\cF_A)$ are identified with those of the line bundle $\pi_E^\ast p^\ast M$ on $\bbA(\cF_E)$ that are invariant under the action of $\Lambda$.  According to \Cref{Lemma:FourierSeriesInvariantSection}, and with the notation therein defined, understanding $\Lambda$-invariant sections of $\pi_E^\ast p^\ast M$ amounts to determining, for each $\check{\chi} \in \check{\Lambda}_M$, the analytic functions on $\bbV(C_B)$ undergoing the relation \eqref{Eq:LastInvarianceFunctionUnderLattice}. If the set $\check{\Lambda}_M$ is empty, then there is nothing do. Otherwise, for a fixed $\check{\chi} \in \check{\Lambda}_M$, this is done by applying \Cref{Lemma:InvarianceCharacter} with $X$ being a $K$-rational point and
\begin{align*}
V = C_B, && M = \Lambda,  && \sigma = \phi_T \circ \theta_\Lambda, &&\tau = \beta(-, \check{\chi}).
\end{align*}

 (1) Assume the line bundle $L$ to be non-trivial. What needs to be noticed is that the natural $\Lambda$-linearization carried by the (trivial, by definition  of $\check{\Lambda}_M$) line bundle $p^\ast (M \otimes \cL_{\check{\chi}})$ on $E$ is non-trivial. If it were not to be the case, then the line bundle $L$ would be isomorphic to the dual of the homogeneous line bundle on $A$ induced by the $\Lambda$-linearized line bundle $p^\ast \cL_{\check{\chi}}$ on $E$. The latter being trivial (see \Cref{Ex:LinearizationTautologicalBundleUniversalCover}), this would contradict the assumption of $L$ being non-trivial. As seen in \Cref{Ex:LinearizationTrivialBundleUniversalCover}, this forces the existence of $\chi \in \Lambda$ such that $\beta(\chi, \check{\chi}) \neq 1$. Thus \Cref{Lemma:FourierSeriesInvariantSection} (1) states that there are no non-zero analytic functions on $\bbV(C_B)$ satisfying \eqref{Eq:LastInvarianceFunctionUnderLattice}.

\medskip
   
 (2) Suppose $L = \cO_A$. In this case, the line bundle $M$ can be taken to be $\cO_B$ and the trivialization $r$ to take the value $r(\chi) = 1$ for each $\chi \in \Lambda$. Then, the subset $\Lambda_M$ is simply $\Ker \check{c}$ and, for $\chi \in \Lambda$,
 \[ \beta(\chi, \check{\chi}) = \langle \chi, \check{\chi}\rangle.\]
(Since $\check{\chi}$ lies in the kernel of $\check{c}$, the line bundle $\cL_{\check{\chi}}$ is tautologically trivial and, for $\chi \in \Lambda$, $\langle \chi, \check{\chi}\rangle$ a non-zero element of $K$.) If $\check{\chi}$ is non-zero, then there is $\chi \in \Lambda$ such that $\langle \chi, \check{\chi} \rangle \neq 1$, because the pairing $\langle -, -\rangle$ on $\Lambda \times \check{\Lambda}$ is nondegenerate. \Cref{Lemma:FourierSeriesInvariantSection} (1) implies again that there are no non-zero analytic functions on $\bbV(C_B)$ satisfying \eqref{Eq:LastInvarianceFunctionUnderLattice}. The only case left to consider is when $\check{\chi} = 0$. Remark that the cokernel of the $K$-linear map $\Lambda \otimes_{\bbZ} K \to C_B$ induced by $\phi_T \circ \theta_{\Lambda}$ can be identified with the cokernel $C$ of $\phi_A \colon \omega_{\check{A}} \to F$.  This is true for the following reasons: first, the image of $\theta_\Lambda$ clearly spans  $\omega_{\check{T}}$ as a $K$-vector space; second, the diagram \eqref{Eq:DecompositionUniversalCoverLinearMapMakingPushOut} is commutative; third, the $K$-linear map $\phi_E$ have same cokernel as $\phi_A$, being the composition of the latter with the isomorphism $\rd \check{u} \colon \omega_{\check{A}} \to \omega_{\check{E}}$. \Cref{Lemma:FourierSeriesInvariantSection} (2) states that analytic functions on $\bbV(C_B)$ satisfying \eqref{Eq:LastInvarianceFunctionUnderLattice} with $\check{\chi} = 0$ are exactly those coming from $\bbV(C)$ via the quotient map $\bbV(C_B) \to \bbV(C)$. This concludes the proof.
\end{proof}

\begin{proof}[{Proof of the \Cref{Lemma:FourierSeriesInvariantSection}}] To get started with, remark that the linear action of $\Lambda$ on the global sections of $\pi_E^\ast p^\ast M$ respects the decomposition \eqref{Eq:DecompositionGlobalSectionsLineBundleUniversalCoverAffineBundle}.   
The reason for that is the fonctoriality of the decomposition \eqref{Eq:FouriesSeriesVectorBundleAffineBundle}, from which \eqref{Eq:DecompositionGlobalSectionsLineBundleUniversalCoverAffineBundle} is deduced, with respect to automorphisms of $\bbA(\cF_E)$ commuting with the action of the torus $T$ (\Cref{Cor:FonctorialDecompositionWrtAutomorphisms}). This is meant to be applied,  for $\chi \in \Lambda$, to the translation $\tr_{\chi}$ by $\Phi_E(\chi^\natural)$ on $\bbA(\cF_E)$, the alluded commutation being satisfied because the analytic group $\bbA(\cF_E)$ is abelian. Now, by definition, a global section $f$ of the line bundle $\pi_E^\ast p^\ast M$ is invariant under the action of $\Lambda$ if and only if, for $\chi \in \Lambda$, 
\[ \tr_{\chi}^\ast f = f \otimes r(\chi).\]
Let $\sum_{\check{\chi} \in \check{\Lambda}_M} \pr_{u, E}^\ast f_{\check{\chi}}$ be the  Fourier expansion of $f$. The preservation of the decomposition \eqref{Eq:DecompositionGlobalSectionsLineBundleUniversalCoverAffineBundle} under the action of $\Lambda$ implies that $f$ is $\Lambda$-invariant if and only if $f_{\check{\chi}}$ is for each $\check{\chi} \in \check{\Lambda}_M$ is. In particular, it suffices to show that, for $\check{\chi} \in \check{\Lambda}_M$, the analytic function $f_{\check{\chi}}$ is $\Lambda$-invariant if and only if it fulfills the condition \eqref{Eq:LastInvarianceFunctionUnderLattice} for every $\chi \in \Lambda$.
In order to do so, begin with making explicit the action of $\Lambda$ on the factor of the right-hand side of \eqref{Eq:DecompositionGlobalSectionsLineBundleUniversalCoverAffineBundle} corresponding to some $\check{\chi} \in \check{\Lambda}_M$. For, the $\Lambda$-linearization of the line bundle $p^\ast \cL_{\check{\chi}}$ is the datum, for $\chi \in \Lambda$, of the isomorphism
\[
p^\ast \cL_{\check{\chi}} \too \tr_\chi^\ast p^\ast \cL_{\check{\chi}} , \qquad s \longmapsto s \otimes \langle \chi, \check{\chi}\rangle.
\]
Therefore, recalling the description of the $\Lambda$-linearization of the line bundle $p^\ast M$, the $\Lambda$-linearized line bundle $p^\ast (M \otimes \cL_{\check{\chi}})$ on $E$ is isomorphic to the line bundle $\cO_E$ together with the $\Lambda$-linearization given, for $\chi \in \Lambda$, by the isomorphism
\[ g \longmapsto  \beta(\chi, \check{\chi}) g.\]
The conclusion is reached by applying the above with $g = \pr_{u, E}^\ast f_{\check{\chi}}$ and making use of the identity \eqref{Eq:ProjectionFundamentalGroupOnUnipotentPartOfAffineBundle}.
\end{proof}

\begin{proof}[{Proof of \Cref{Lemma:InvarianceCharacter}}] Begin with a series of reductions. First, the kernel of $\sigma$ may be assumed to be contained in that of $\tau$. For (2) this is clear, as the latter is whole abelian group $\Gamma$; for (1), if $\gamma \in \ker \sigma$ is such that $\tau(\gamma) \neq 1$, then the invariance $f = \tau(\gamma) f$ implies $f = 0$. In view of this, the map $\sigma$ may be supposed injective and $\Gamma$ identified with its image in $V$.  Second, the $K$-subvector space $V'$ of $V$ generated by the lattice $\Gamma$ may be assumed to coincide with $V$. For, a choice of a section of the projection $V \to W$ yields an isomorphism $V \cong V' \oplus W$. Then, it suffices to replace $X$ by $X \times \bbV(W)$. Third, arguing by induction, the $K$-vector space $V$ may be assumed to be $1$-dimensional. Moreover, by choosing a generator $\gamma$ of the rank $1$ free abelian group $\Gamma$ and setting $u = \tau(\gamma)$, the set $\cI$ is identified with those of analytic functions $f$ on $X \times \bbA^1_K$ such that, for an integer $n \in \bbZ$,
\begin{equation} \label{Eq:LastDefinitionInvariantFunction} f(x + n) = u^n f(x).
\end{equation}
Fourth, the question being local on $X$, the $K$-analytic space $X$ may be assumed to be the Banach spectrum of a $K$-affinoid algebra $C$.
Now, pick $f \in \cI$ and expand it in power series:
\[ f(x) = \sum_{i = 0}^\infty f_i x^i,\]
with $f_i \in C$ such that, for a real number $r > 0$, $\| f_i \|_C r^i \to 0$ as $i \to \infty$,  where $\| \cdot \|_C$ is a norm on the $K$-affinoid algebra $C$ defining its topology. With this notation, the expansion of $f(x + n)$ is
\[ f(x + n) = \sum_{i = 0}^\infty  \sum_{k \ge i} \binom{k}{i} f_k n^{k - i}  x^i. \]
Comparing the power series of the two sides of \eqref{Eq:LastDefinitionInvariantFunction} yields the equality, for an integers $n \neq 0$ and $i \ge 0$,
\begin{equation} \label{Eq:RelationForInvariantFunctionUnderCharacter} (u^n - 1) f_i = \sum_{k > i} \binom{k}{i} f_k n^{k - i} .
\end{equation}

(1) Applying the non-Archimedean triangle inequality to \eqref{Eq:RelationForInvariantFunctionUnderCharacter} with $n = 1$ gives
\[ \| f_i \|_C \le |u - 1|^{-1} \max_{k > i}   \| f_k \|_C, \]
because integers have always absolute value $\le 1$. A repeated use of such an inequality yields, for an integer $N \ge 0$,
\begin{equation} \label{Eq:HopefullyTheLastInequality} \| f_i \|_C \le |u - 1|^{-N} \max_{k \ge i + N}   \| f_k \|_C.
\end{equation}
If $|u - 1| \ge 1$, then the convergence of $f$ implies that the right-hand side of \eqref{Eq:HopefullyTheLastInequality} tends to $0$ as soon as $N$ tends to infinity, showing $f_i = 0$. Suppose $|u - 1| < 1$. For a real number $0 < r < 1$, the inequality \eqref{Eq:HopefullyTheLastInequality} entails
\[ \| f_i\|_C \le \left( \frac{r}{|u-1|} \right)^N \max_{k \ge i + N}  \{ \| f_k\|_C r^{-k} \}, \]
because $r^k \le r^{N}$ for $k \ge i + N$. In particular, for $r = |u - 1|$,
\[ \| f_i\|_C \le \max_{k \ge i + N} \{ \| f_k\|_C r^{-k} \}. \]
Again by the convergence of $f$, the right-hand side goes to $0$ as $N \to \infty$, eventually proving that $f_i$ vanishes.

(2) If $u = 1$, then dividing the identity \eqref{Eq:RelationForInvariantFunctionUnderCharacter} by $n$ and replacing $i + 1$ by $i$ implies, for an integer $i \ge 1$,
\[ i f_i = - \sum_{k \ge i + 1} \binom{k}{i - 1} f_k n^{k - i}.\]
If the valuation on $K$ is $p$-adic, then one concludes as in the proof of \Cref{Lemma:FormalFunctionsInvariantUnderLattice}: by taking $n = p^N$ and letting $N$ tend to infinity, the right-hand side of the previous identity tends to $0$, thus $f_i$ must vanish. When the valuation of $K$ induces the trivial one on $\bbQ$, the above formula together with the non-Archimedean triangle inequality for $\| \cdot \|_C$ implies
\[ \| f_i\|_C \le \max_{k > i + 1} \| f_k \|_C, \]
the absolute value of a non-zero integer being $1$. Applying it over and over again, yields, for each integer $N \ge 0$,
\[ \| f_i\|_C \le \max_{k \ge i + N} \| f_k \|_C. \]
The convergence of $f$ implies that the right-hand side of the above inequality tends to $0$ as soon as $N$ goes to infinity, whence $f_i = 0$.  The integer $i \ge 1$ being arbitrary, the analytic function $f$ coincides with $f_0$ and therefore belongs to $C$.
\end{proof}

\subsection{Proof of \Cref{Cor:AnalyticFunctionsVectExtSemiAbelian}} The proof of statement (1) does not need \Cref{Thm:AnalyticFunctionsOnVectorExt}. It is just a consequence of Fourier expansion for the toric bundle $G$ over $A$ (\Cref{Prop:FourierExpansionAnalyticFunctionToricBundle}) and the vanishing of global sections of a non-trivial homogeneous line bundles on $A$ (the rigid-analytic analogue of \Cref{Prop:NonTrivialHomogeneousLineBundlesHaveNoNonzeroSections}). For (2), the projection $G \to \bbA(\cF)$ permits to see the $K$-analytic space $G$ as the principal $T$-bundle over $\bbA(\cF)$ associated with the group homomorphism
\[ \Lambda \too \Pic(\bbA(\cF)), \qquad \chi \longmapsto p^\ast \cL_\chi, \]
where $p \colon \bbA(\cF) \to A$ and $\cL_\chi$ is the restriction to $A \times \{ c(\chi)\}$ of the Poincar\'e bundle $\cL$ on $A \times \check{A}$. Therefore, Fourier expansion yields an isomorphism (see \Cref{Prop:FourierExpansionAnalyticFunctionToricBundle}, with the notation therein defined),
\[ \Gamma(G, \cO_G) \cong \hbigoplus_{\chi \in \Lambda} \Gamma(\bbA(\cF), p^\ast \cL_\chi). \]
Since the field $K$ is of characteristic $0$, \Cref{Thm:AnalyticFunctionsOnVectorExt} (1) can be applied to obtain, for $\chi \not \in \ker c$,
\[  \Gamma(\bbA(\cF), p^\ast \cL_\chi)= 0.\]
In turn, by \Cref{Thm:AnalyticFunctionsOnVectorExt} (1),  precomposing with $\pr_u \colon \bbA(\cF) \to \bbV(C)$ induces an isomorphism of $K$-algebras
\[ \Gamma(\bbA(\cF), \cO_{\bbA(\cF)}) \cong \Gamma(\bbV(C), \cO_{\bbV(C)}). \]
Gathering all of this yields an isomorphism of $K$-algebras
\[ \Gamma(G, \cO_G) \cong \hbigoplus_{\chi \in \ker c} \Gamma(\bbV(C), \cO_{\bbV(C)}). \]
Applying again \Cref{Prop:FourierExpansionAnalyticFunctionToricBundle} to the toric bundle $T_0 \times \bbV(C) \to \bbV(C)$ permits to identify the right-hand side of the above equation with $\Gamma(\bbV(C) \times T_0, \cO_{\bbV(C) \times T_0})$, thus concluding the proof. \qed

\appendix

\section{Stein spaces} \label{sec:Stein}


\begin{definition} 
A $K$-analytic space $X$ is \emph{Stein} if it is separated, countable at infinity and, for each complete valued extension $K'$ of $K$, higher coherent cohomology vanishes on the $K'$-analytic space $X'$ deduced from $X$ by extension of scalars; that is $\rH^q(X', F)$ for any coherent $\cO_{X'}$-module $F$ and any integer $q \ge 1$.

A \emph{Stein exhaustion} of a $K$-analytic space $X$ is a G-cover $\{ X_n \}_{n \in \bbN}$ made of Stein compact analytic domains $X_n$ of $X$ such that, for an integer $n \ge 0$, $X_n$ is contained in $X_{n + 1}$ and the restriction map $\Gamma(X_{n + 1}, \cO_X) \to \Gamma(X_n, \cO_X)$ has a dense image (\cite[Definition 1.1]{notions}).\footnote{A compact Stein $K$-analytic space is called a Liu space in \cite{notions}, so that a Stein exhaustion is nothing but a W-exhaustion by Liu domains in terminology of \emph{op.cit.}, Definition 1.10.}
\end{definition}

\begin{definition} A $K$-analytic space $X$ is
\begin{itemize}
\item \emph{holomorphically separated} if, for distinct points $x$ and $x'$ of $X$, there is a $K$-analytic function $f$ on $X$ such that $|f(x)| \neq |f(x')|$;
\item \emph{holomorphically convex} if, for a compact subset $C$ of $X$, its holomorphically convex hull $\{ x \in X : |f(x)| \le \| f\|_C, f \in \Gamma(X, \cO_X)\}$ is compact, where $\| \cdot \|_C$ is the sup norm on $C$.
\end{itemize}
\end{definition}

\begin{theorem} \label{Thm:KiehlTheorem} Let $X$ be a separated and countable at infinity  $K$-analytic space. Then the  following are equivalent:
\begin{enumerate}
\item $X$ is  Stein;
\item $X$ admits a Stein exhaustion;
\item $X$ is holomorphically separable, holomorphically convex and  $\cO_X$ is acyclic.
\end{enumerate}
\end{theorem}

\begin{proof} This is \cite[Theorem 1.11]{notions}. Note that, as the field $K$ is non-trivially valued, by Theorem A.5 in \emph{op.cit.}, the vanishing of the higher cohomology of $\cO_X$ implies that vanishing over any complete valued field extension of $K$.
\end{proof}

\begin{proposition} \label{Prop:ProductOfSteinIsStein} For $i = 1, 2$, let $X_i$ be a Stein $K$-analytic space and  $X_i \to S$ a separated morphism of $K$-analytic spaces.  Then $X_1 \times_S X_2$ is Stein.
\end{proposition}

\begin{proof} The fibered product $X_1 \times_S X_2$ is a closed analytic subspace of the product $X:= X_1 \times X_2$, reducing the matter to the case when $S$ is a $K$-rational point. When $X_1$ and $X_2$ are compact, let $K'$ be an algebraically closed complete valued extension of $K$ such that, for $i = 1, 2$, the $K'$-analytic space $X'_{i}$, deduced from $X_i$ by extension of scalars, is strict. Then, according to  Theorem 5.2 and allowing for Theorem A.5 in \cite{notions}, it suffices to show that the sheaf $\cO_{X}$ is acyclic and, given two distinct $K'$-points $x$, $x'$ of $X$, there is a $K'$-analytic function $f$ on $X' := X'_1 \times X_2'$ such that $|f(x)| \neq |f(x')|.$ The vanishing of higher cohomology of $\cO_X$ is granted by \emph{op.cit.}, Theorem A.6. For two distinct $K'$-points $x = (x_1, x_2)$, $x' = (x'_1, x'_2)$, there is $i  = 1, 2$ for which $x_i \neq x'_i$. Up to permuting the factors suppose $i = 1$. Then, the $K'$-analytic space $X_1'$ is Stein, the notion being insensible to scalar extensions, and thus holomorphically separable. Therefore, there is a $K'$-analytic function $f$ such that $|f(x_1)| \neq |f(x'_1)|$. In the general case, according to \Cref{Thm:KiehlTheorem}, it suffices to exhibit a Stein exhaustion of $X_1 \times X_2$. Now, if $\{ X_{i, a}\}_{a \in \bbN}$ is a Stein exhaustion of $X_i$ for $i = 1, 2$, the preceding case implies that the collection $\{ X_{1, a} \times X_{2, b}\}_{a, b \in \bbN}$ is a Stein exhaustion of $X$.
\end{proof}

\begin{lemma}[{\cite[Corollary 1.16]{notions}}] \label{Lemma:ClosedOfSteinIsStein}
A closed analytic subspace $Z$ of a Stein $K$-analytic space $X$ is Stein.
\end{lemma}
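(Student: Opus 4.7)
The plan is to verify the cohomological clause of the definition of Stein directly, by transporting vanishing from $X$ down to $Z$ along the closed immersion $\iota \colon Z \hookrightarrow X$.

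First I would dispose of the two topological conditions. Being a closed analytic subspace of the separated $K$-analytic space $X$, the space $Z$ is separated. Likewise, $Z$ inherits countability at infinity from $X$: intersecting a countable exhaustion of $X$ by compact subsets with $Z$ yields such an exhaustion of $Z$.

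The substantive step is the cohomological vanishing. The closed immersion $\iota$ is a finite morphism, so the direct image functor $\iota_\ast$ preserves coherence and has no higher derived functors. For a coherent $\cO_Z$-module $F$ and an integer $q \ge 1$, it follows that
\[ \rH^q(Z, F) = \rH^q(X, \iota_\ast F) = 0, \]
the last equality by the Stein assumption on $X$ applied to the coherent $\cO_X$-module $\iota_\ast F$.

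It remains to check the same statement after base change to any complete valued extension $K'$ of $K$. But base change commutes with closed immersions, so the base change $Z'$ of $Z$ is a closed analytic subspace of the base change $X'$ of $X$; since the definition of Stein for $X$ already includes the vanishing of higher coherent cohomology on $X'$, repeating the argument with the closed immersion $Z' \hookrightarrow X'$ settles the case. There is no real obstacle here; the only subtlety worth flagging is that the definition of Stein adopted in the paper already bakes in invariance under scalar extension, so nothing additional must be verified by hand.
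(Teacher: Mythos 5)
Your proof is correct and is exactly the ``usual argument of cohomological vanishing'' that the paper invokes (it cites \cite[Corollary 1.16]{notions} without writing out details): push a coherent $\cO_Z$-module forward along the closed immersion, use exactness and coherence-preservation of $\iota_\ast$ to identify $\rH^q(Z,F)$ with $\rH^q(X,\iota_\ast F)$, and note that the scalar-extension clause is automatic since closed immersions are stable under base change and the Stein hypothesis on $X$ already quantifies over all complete valued extensions. Nothing is missing.
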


\begin{theorem} \label{Thm:ConsequenceSteinEmbedding} Let $X$ be a $K$-analytic space without boundary, countable at infinity and such that $\sup_{x \in X} \dim_{\cH(x)} \Omega^1_{X, x} < + \infty$. Then $X$ is Stein if and only if there is a closed immersion $X \to \bbA^{n, \an}_K$ for some integer $n \ge 0$.
\end{theorem}

\begin{proof} If $X$ is Stein, then the wanted embedding is given by \cite[Theorem 4.25]{LutkebohmertStein}, granted that the $K$-analytic space $X$ admits a Stein exhaustion made of affinoids domains (as opposed to compact Stein analytic spaces). Under the absence of boundary, the latter holds by  \Cref{Thm:KiehlTheorem} and \cite[Corollary 1.14]{notions}.

Conversely, the analytification of affine varieties are Stein spaces. \Cref{Lemma:ClosedOfSteinIsStein} permits to conclude.
\end{proof}

\begin{remark} \label{EquivalentDefinitionSteinAlgebraicVariety}
For an algebraic variety $X$, the $K$-analytic space $X^\an$ is without boundary and countable at infinity. Moreover, the bound $\sup_{x \in X^\an} \dim_{\cH(x)} \Omega^1_{X^\an, x} < + \infty$ is always satisfied. It follows from \Cref{Thm:ConsequenceSteinEmbedding} that the definition of Stein algebraic variety just given agrees with the one in the introduction of this paper.
\end{remark}

\begin{proposition} Let $E$ be a vector bundle on a Stein $K$-analytic space $X$. Then the total space $\bbV(E)$ is Stein.
\end{proposition}

\begin{proof} Write $V := \bbV(E)$. When $Y$ is compact, by the analogue of Cartan's Theorem A in the current framework (\cite[Proposition 2.1]{notions}), there exists an integer $N \ge 0$ and an epimorphism of $\cO_Y$-modules $\phi \colon \cO_Y^N \to E^\vee$. This furnishes a closed immersion $j \colon V \to \bbA^N_Y$, proving that $V$ is Stein. Before moving to the general case, remark that a Stein exhaustion on $V$ is constructed as follows. For a real number $r > 0$, set $V_{Y, \phi, r} := j^{-1}(D_r)$ where $D_r = \{ x \in \bbA^N_Y : |t_i(x)| \le r, i = 1, \dots, N \}$ is the disk of radius $r$ in $\bbA^N_Y$ and $t_1, \dots, t_N$ are the coordinate functions on $\bbA^N_Y$. Then,
\begin{enumerate}
\item the $K$-analytic space $V_{Y, \phi, r}$ is a closed subspace of $D_r$ thus Stein;
\item the restriction map $\Sym \Gamma(Y, E^\vee) \to \Gamma(V_{Y, \phi, r}, \cO_V)$ has a dense image.
\end{enumerate}
To see (2) notice that the polynomial $A$-algebra $A[t_1, \dots, t_N]$ is dense in \[\Gamma(\bbA^N_Y, \cO_{\bbA^N_Y}) = A \{t_1/r, \dots, t_N/r \},\] the restriction map $\rho \colon \Gamma(D_r, \cO_{\bbA^N_Y}) \to \Gamma(V_{Y, \phi, r}, \cO_{V})$ is surjective because $\bbA^N_Y$ is Stein, and the image of $A[t_1, \dots, t_N]$ via $\rho$ coincides with that of $\Sym \Gamma(Y, E^\vee)$. The sought-for Stein exhaustion is for instance $\{ V_{Y, \phi, r_\alpha} \}_{\alpha \in \bbN}$, where $(r_\alpha)_{\alpha \in \bbN}$ is an increasing unbounded sequence of positive real numbers. 

Now turn to the general case. Let $\{ Y_\alpha \}_{\alpha \in \bbN}$ be a Stein exhaustion of $Y$. Then, by \cite[Proposition 5.1]{notions} (itself based on \cite[Satz 2.4]{KiehlStein}), for a non-negative integer $\alpha \ge 0$, the restriction map  $\Gamma(Y, E^\vee) \to \Gamma(Y_\alpha, E^\vee)$ has a dense image. In particular, for an epimorphism of $\cO_{Y_\alpha}$-modules $\phi \colon \cO_{Y_\alpha}^N \to E^\vee_{\rvert Y_\alpha}$ and $r> 0$, the restriction map $\Sym \Gamma(Y, E^\vee) \to \Gamma(V_{Y_\alpha, \phi, r}, \cO_{V})$
has a dense image, where the notation above has been adopted. Construct inductively as follows a Stein exhaustion $\{ V_\alpha \}$ for $V$ where each $V_\alpha$ is of the form $V_{Y_\alpha, \phi_\alpha, r_\alpha}$ for some $r_\alpha > 0$ and a surjective homomorphism of $\cO_{Y_\alpha}$-modules $\phi_\alpha \colon \cO_{Y_\alpha}^{N_\alpha} \to E_{\rvert Y_\alpha}$.  For $\alpha = 0$, the positive real number $r_0$ and the epimorphism $\phi_0$ can be picked freely. For $\alpha \ge 1$, fix $\phi_\alpha$ arbitrarily and choose a real number $\rho_\alpha > 1$ so that the compact analytic domain $V_{Y_\alpha, \phi_\alpha, \rho_\alpha}$ contains $V_{\alpha -1}$; finally, set $r_\alpha = 2 \rho_\alpha$. The $K$-analytic space $V_\alpha$ is Stein and, by the arguments above, the image of the restriction map  $\Gamma(V_\alpha, \cO_{V}) \to \Gamma(V_{\alpha-1}, \cO_{V})$
is dense, as it contains $\Sym \Gamma(Y, E^\vee)$.  This shows that the so-obtained collection of compact analytic domains $\{ V_\alpha \}_{\alpha \in \bbN}$ is a Stein exhaustion of $V$, concluding the proof.
\end{proof}

\begin{proposition} \label{Prop:ToricBundleStein} Let $X$ be a $K$-analytic space without boundary and countable at infinity. Let $L$ be a line bundle on $X$ and $P$ its total space deprived of it zero section. Then $X$ is Stein if and only if $P$ is.
\end{proposition}

\begin{proof} Suppose that $P$ is Stein. By definition, showing that $X$ is Stein amounts to proving the vanishing of higher coherent cohomology on $X \times_K K'$ for any given valued field extension $K'$ of $K$. Up to extending scalars, one reduces to the case $K = K'$. Now, for a coherent $\cO_X$-module $F$ and an integer $q \ge 1$, the cohomology group $\rH^q(P, p^\ast F)$ vanishes, thus $\rH^q(X, F)= 0$ by \Cref{Prop:CohomologyPrincipalBundle} (2). 

Suppose that $X$ is Stein. Consider the duality pairing $\beta \colon \bbV(L) \times_X \bbV(L^\vee) \to \bbA^1$ and the `diagonal' morphism $\delta \colon \bbV(L)^\times \to \bbV(L)^\times \times_X \bbV(L^\vee)^\times $ sending a trivialization $s$ to $(s, \check{s})$ where $\check{s}$ is the unique linear form on $L$ whose value on $s$ is $1$. The map $\delta$ induces an isomorphism of $P$ with the fiber at $1$ of $\beta$. \Cref{Prop:ProductOfSteinIsStein} implies that the $K$-analytic space $\bbV(L) \times_X \bbV(L^\vee)$ is Stein, thus $P = \beta^{-1}(1)$ is Stein too.
\end{proof}

\small

\bibliography{./../biblio}

\bibliographystyle{amsalpha}

\end{document}